\documentclass[aop,preprint]{imsart}

\RequirePackage{amsthm,amsmath,amsfonts,amssymb,mathrsfs}
\RequirePackage[numbers,sort&compress]{natbib}
\RequirePackage[colorlinks,linkcolor=blue,citecolor=blue,urlcolor=blue]{hyperref}
\RequirePackage{graphicx}
\RequirePackage{aliascnt}
\RequirePackage[nameinlink,noabbrev]{cleveref}
\RequirePackage{enumitem,needspace}
\RequirePackage{booktabs,tabularx,array,multirow}

\startlocaldefs
\theoremstyle{plain}

\newtheorem{theorem}{Theorem}[section]
\newaliascnt{lemma}{theorem}
\newtheorem{lemma}[lemma]{Lemma}
\aliascntresetthe{lemma}

\newaliascnt{proposition}{theorem}
\newtheorem{proposition}[proposition]{Proposition}
\aliascntresetthe{proposition}

\newaliascnt{corollary}{theorem}

\aliascntresetthe{corollary}

\theoremstyle{definition}
\newaliascnt{assumption}{theorem}
\newtheorem{assumption}[assumption]{Assumption}
\aliascntresetthe{assumption}

\newtheorem{remark}{Remark}[section]

\newaliascnt{convention}{theorem}

\aliascntresetthe{convention}

\endlocaldefs

\begin{document}

\begin{frontmatter}
\title{Three quantitative central limit theorems for the unbounded excursion component of a Gaussian field}
\runtitle{Three quantitative central limit theorems}

\begin{aug}
\author[A]{\fnms{Yu}~\snm{Cheng} \ead[label=e1]{12531173@mail.sustech.edu.cn}},
\author[B]{\fnms{Song-hao}~\snm{Liu}\ead[label=e2]{liusonghao@dlut.edu.cn}},
\author[A,C]{\fnms{Qi-man}~\snm{Shao}\ead[label=e3]{shaoqm@sustech.edu.cn}}
\and
\author[A]{\fnms{Jing-yu}~\snm{Xu}\ead[label=e4]{12131253@mail.sustech.edu.cn}}


\address[A]{Department of Statistics and Data Science, Southern University of Science and Technology, Shenzhen, Guangdong, 518055, China \printead[presep={,\ }]{e3,e4,e1}}

\address[B]{School of Mathematical Sciences, Dalian University of Technology, Dalian, Liaoning, China \printead[presep={,\ }]{e2}}

\address[C]{Department of Statistics and Data Science, Shenzhen International Center for Mathematics, Southern University of Science and Technology, Shenzhen, Guangdong, 518055, China \printead[presep={,\ }]{e3}}
\end{aug}


\begin{abstract}
For a class of smooth stationary Gaussian fields, we study the
fluctuations of the volume, surface area and Euler characteristic
of the unbounded excursion component in growing boxes
$\Lambda_n=[-n,n]^d$.
We establish nondegenerate central limit theorems for all three
functionals throughout the supercritical regime in every
dimension $d\geq 2$.

We further quantify these central limit theorems by establishing
Berry--Esseen bounds of order $O\bigl(|\Lambda_n|^{-(1-1/s)}\bigr)$ in Kolmogorov distance, where $s\in(1,2]$ depend on the regularity
and decay assumptions.
Especially for the Bargmann--Fock field,
we obtain a Berry--Esseen bound of
$O\bigl(|\Lambda_n|^{-1/2}\bigr)$ for all three functionals.
A matching lower bound shows that this rate is sharp for the
Euler characteristic.
For the volume, we improve the previously known rate of
$O\bigl(|\Lambda_n|^{-1/2}(\log|\Lambda_n|)^{2d}\bigr)$.
We mainly use Stein's method and moment bounds for differences arising from white-noise resampling to finish our proof.
\end{abstract}

\begin{keyword}[class=MSC]
\kwd{82B44, 60F05, 60B20, 82B26}
\end{keyword}

\begin{keyword}
\kwd{Gaussian fields}
\kwd{phase transition}
\kwd{excursion set}
\kwd{Berry--Esseen bound}
\kwd{Stein's method}
\end{keyword}

\end{frontmatter}

\section{Introduction}
\subsection{Background}\label{intro:background}
Percolation is a fundamental framework to study connectivity and phase transitions in random systems, see \citet{Grimmett1999},
for example the sharpness of phase transition
and the geometric structure of clusters in the critical and supercritical regimes, see \citet{DuminilCopinTassion2016,BollobasRiordan2006}.

In addition, continuum models of percolation have also attracted considerable interest, 
particularly in the connectivity of excursion sets of Gaussian random fields.
For a smooth, centred, stationary Gaussian field $f:\mathbb R^d\to\mathbb R$, $d\geq2$, its upper excursion set at level $\ell\in\mathbb R$ is defined by
\begin{align*}
    \{f\geq\ell\}:=\{x\in\mathbb R^d:f(x)\geq\ell\}.
\end{align*}

Early percolation criterion given by \citet{MolchanovStepanov1983b} shows that, 
under proper assumption on $f$, there exists $\ell_c\in\mathbb{R}$, such that 
\begin{align*}
\mathbb{P}\bigl(\{f\geq\ell\}\text{ has an unbounded component}\bigr)=
    \begin{cases}
        1, & \ell<\ell_c,\\
        0, & \ell>\ell_c.
    \end{cases}
\end{align*}

More recently, for $d=2$, the large-scale connectivity of Gaussian excursion sets is studied in \citet{BeffaraGayet2017,BeliaevMuirhead2018,RiveraVanneuville2019}, 
followed by sharpness of the phase transitions and identification that $\ell_c=0$ under different conditions,
see \citet{RiveraVanneuville2020,MuirheadVanneuville2020,MuirheadRiveraVanneuvilleKohlerSchindler2023}.
Further, sharpness results are extended to arbitrary dimensions under suitable assumptions in \citet{Severo2022}, 
while for the Bargmann--Fock field and $d\geq3$, \citet{DuminilCopinRiveraRodriguezVanneuville2023} proves that $\ell_c>0$.
Related work has further studied the critical behaviour of these models and the large-scale connectivity and geometry of excursion clusters, 
see \citet{GarbanVanneuville2020,MuirheadSevero2024,Muirhead2024,Severo2024,Vernotte2025,Vernotte2026}.
For related physical background and surveys, see \citet{Isichenko1992,Beliaev2023}.

\subsection{Limit behaviours of excursion sets}
A natural question is then to study the limiting behaviour of geometric and topological statistics of the excursion and level sets of $f$.
Under suitable assumptions, law of large numbers, central limit theorems and related asymptotic results are established for various such statistics;
see \citet{KratzLeon2001,Spodarev2013,NazarovSodin2016,MeschenmoserShashkin2011,MeschenmoserShashkin2013,Shashkin2013,BeliaevMcAuleyMuirhead2020,BeliaevMcAuleyMuirhead2024,HirschLachiezeRey2024}.
In particular, central limit theorems for the volume, surface area and Euler characteristic of the excursion set $\{f\geq\ell\}$ in growing observation windows 
are proved in \citet{BulinskiSpodarevTimmermann2012,Pham2013,EstradeLeon2016,Muller2017,KratzVadlamani2018}.

In particular, under suitable regularity, decay and positivity assumptions,
\citet{McAuley2026a} established laws of large numbers for these three functionals of the unbounded excursion component in growing box $\Lambda_n=[-n,n]^d$,
when $\ell<\ell_c$.
Moreover, nondegenerate central limit theorem was proved under the additional assumptions $q\geq0$ and $\ell<-\ell_c$, with $d=2$ for the Euler characteristic
\footnote{When $d=2$, $\ell_c=0$, so the conditions $\ell<-\ell_c$ and $\ell<\ell_c$ coincide.
Here $q$ is the kernel function of $f$, see \eqref{Gaussian field}.}.

In a subsequent work, \citet{McAuley2026b} obtained Berry--Esseen bounds for the volume at levels $\ell<\ell_c$, 
under suitable regularity, non-negative covariance and superpolynomial decay of truncated connection probability assumptions.
Explicitly, let $K(x-y)=\mathbb{E}[f(x)f(y)]$ be the covariance function of the field, and assume that 
\begin{align*}
    |\partial^\alpha K(x)|\leq C\left|x\right|^{-\beta},\quad \left|\alpha\right|\leq 1,
\end{align*}
then the normal approximation rate is $O\left(n^{-\frac d2+\frac{d^2}{\beta+d}+\varepsilon}\right)$, where $\epsilon>0$ is arbitrary.
For the Bargmann--Fock field, the rate is $O\left(n^{-d/2}(\log n)^{2d}\right)$.

Naturally, we may ask whether the nondegenerate central limit theorem holds at any supercritical level for surface area and Euler characteristic.
Further question is to derive volume-order Berry-Esseen bound for surface area and Euler characteristic functionals,
and to get a better convergence rate in volume case.
This is the motivation of this paper.

\subsection{Our contributions}
Within the Gaussian field framework of \citet{McAuley2026a},
we establish Berry--Esseen bounds for the volume, surface area and Euler characteristic of the unbounded excursion component in growing boxes.
Our approach combines an exchangeable-pair construction with Stein's method and moment estimates for white-noise resampling differences.

To make it clear, under Assumption~\ref{ass:McAuley}, with the kernel function $q\geq0$ and the respective decay conditions, 
we prove that the variances of the surface area and Euler characteristic are $\Theta(n^d)$ for every fixed $\ell<\ell_c$ and every $d\geq2$.
Consequently, nondegenerate central limit theorem holds for any supercritical level $\ell<-\ell_c$ and dimension $d$.

For each of the three functionals, we obtain normal approximation rate of order $O(n^{-d(1-1/s)})$, 
where $1<s\leq2$ depends on the smoothness and decay assumptions of the field.
Under sufficiently strong smoothness and decay conditions, taking $s=2$ gives the rate $O(n^{-d/2})$.
For the Euler characteristic, we also prove a lower bound of order $n^{-d/2}$, showing that the rate obtained at $s=2$ is sharp.
In the Bargmann--Fock field case, our volume bound removes the logarithmic factor in the result of \citet{McAuley2026b}.

Table~\ref{tab:comparison-McAuley} compares our results with the corresponding results of McAuley.
Only the restrictions on the level and dimension are displayed.
All quantitative rates are measured in Kolmogorov distance.

\begin{table*}[t]
    \centering
    \caption{}
    \label{tab:comparison-McAuley}
    \renewcommand{\arraystretch}{1.35}
    \setlength{\tabcolsep}{3pt}
    \scriptsize
    \begin{tabularx}{\textwidth}{
        @{}
        >{\centering\arraybackslash}p{0.12\textwidth}
        >{\raggedright\arraybackslash}p{0.19\textwidth}
        >{\centering\arraybackslash}p{0.14\textwidth}
        >{\raggedright\arraybackslash}X
        >{\raggedright\arraybackslash}p{0.17\textwidth}
        @{}
    }
        \toprule
        Functional
        & Reference
        & Level and dimension
        & CLT and Berry--Esseen bounds
        & Bargmann--Fock field
        \\
        \midrule

        \multirow{2}{0.12\textwidth}{\centering Volume}
        & \cite[Theorems~1.4 and~1.5]{McAuley2026a};
          \cite[Theorem~2.8]{McAuleyQuasi2026}
        & $\ell<\ell_c$, $d\geq2$
        & Nondegenerate CLT;
          $\displaystyle O\!\left(n^{-\frac d2+\frac{d^2}{\beta+d}+\varepsilon}\right)$,
          $\beta>d$, $\varepsilon>0$
        & $\displaystyle O\!\left(n^{-\frac d2}\log^{2d}n\right)$
        \\

        & Theorem~\ref{thm:volume}
        & $\ell<\ell_c$, $d\geq2$
        & $\displaystyle O\!\left(n^{-d(1-\frac1s)}\right)$, $1<s\leq2$
        & $\displaystyle O\!\left(n^{-\frac d2}\right)$
        \\

        \midrule

        \multirow{2}{0.12\textwidth}{\centering Surface area}
        & \cite[Theorems~1.4 and~1.5]{McAuley2026a}
        & $\ell<-\ell_c$, $d\geq2$
        & Nondegenerate CLT; no quantitative rate
        & No quantitative rate
        \\

        & Theorem~\ref{thm:surface}
        & $\ell<\ell_c$, $d\geq2$
        & $\displaystyle O\!\left(n^{-d(1-\frac1s)}\right)$, $1<s\leq2$
        & $\displaystyle O\!\left(n^{-\frac d2}\right)$
        \\

        \midrule

        \multirow{2}{0.12\textwidth}{\centering Euler characteristic}
        & \cite[Theorems~1.4 and~1.5]{McAuley2026a}
        & $\ell<-\ell_c$, $d=2$
        & Nondegenerate CLT; no quantitative rate
        & No quantitative rate
        \\

        & Theorem~\ref{thm:euler}
        & $\ell<\ell_c$, $d\geq2$
        & $\displaystyle c n^{-\frac d2}\leq d_{\mathrm K}\leq Cn^{-d(1-\frac1s)}$, $1<s\leq2$
        & $\displaystyle \Theta\!\left(n^{-\frac d2}\right)$
        \\

        \bottomrule
    \end{tabularx}

    \vspace{1mm}
    \begin{minipage}{0.98\textwidth}
        \footnotesize
        Table 1: Comparison with the results of McAuley \cite{McAuley2026a,McAuleyQuasi2026} for three functionals of the unbounded excursion component.
        Here $d_K$ is the Kolmogorov distance between target random variable and standard normal distribution,
        $\beta>d, 1<s\leq 2$ are decay parameters depend only on the field, other technical assumption for the results are omitted here.
    \end{minipage}
\end{table*}

This paper is organised as follows.
Section~\ref{sec:main-results} states the main results and the assumptions on the Gaussian field.
Section~\ref{sec:mixed-moment-tools} gives the auxiliary lemmas.
The main theorems are proved in Section~\ref{sec:geometric-normal}, and the auxiliary lemmas in Section~\ref{sec:lemma-proofs}.
Finally, Section~\ref{sec:variance-positivity} discuss about the limiting variances for surface area and Euler characteristic.

\section{Main Results}\label{sec:main-results}
In this section, we introduce basic assumptions on Gaussian fields, then state Berry--Esseen bound for three Minkowski functionals via exchangeable pair method. 

\subsection{A general Berry--Esseen bound}
Let $I$ be a finite or countable index set and $X=(X_i)_{i\in I}$ be a family of independent random elements.
For every $i\in I$, let $X_i'$ be an independent copy of $X_i$, with all copies mutually independent and independent of $X$.
If $A\subset I$ is finite, $X^A$ denotes the configuration obtained by replacing $X_i$ by $X_i'$ for all $i\in A$.
For a measurable functional $H=H(X)$, the resampling differences are defined by
\begin{align}
    D_iH&:=H(X)-H(X^{\{i\}}),\label{eq:Di}\\
    D_iD_jH&:=H(X)-H(X^{\{i\}})-H(X^{\{j\}})+H(X^{\{i,j\}}),\qquad i\ne j.\label{eq:Dij}
\end{align}

For $p\geq1$, write $\Vert Y\Vert_p=(\mathbb{E}|Y|^p)^{1/p}$ for the $L^p$ norm, and let
\begin{align*}
    d_{\mathrm K}(Y,Z):=\sup_{z\in\mathbb{R}}\left\lvert \mathbb{P}(Y\leq z)-\mathbb{P}(Z\leq z)\right\rvert
\end{align*}
be the Kolmogorov distance between $Y$ and $Z$.
Throughout, $Z$ denotes a standard normal random variable and its distribution function is denoted by $\Phi$.

First we state a general Berry--Esseen bound for $H$.

\begin{theorem}\label{thm:abstract}
    Let $1<s\leq2$ and $2s\leq p<\infty$, and let
    \begin{equation}\label{eq:pstar-conjugacy}
        p^*=\frac{sp}{p-s},\qquad \frac{1}{p}+\frac{1}{p^*}=\frac{1}{s}.
    \end{equation}

    Let $H=H(X)\in L^p$ and $\sigma^2=\operatorname{Var} (H)>0$.  
    Suppose there exist nonnegative arrays $(a_i)_{i\in I}$, $(c_i)_{i\in I}$, and $(b_{ij})_{i,j\in I}$ such that
    \begin{equation}\label{eq:ab-assumption}
        \Vert D_iH\Vert_p\leq a_i,\qquad \Vert D_iH\Vert_{p^*}\leq c_i,
        \qquad \Vert D_iD_jH\Vert_{p^*}\leq b_{ij}\quad(i\ne j),\qquad b_{ii}=c_i,
    \end{equation}
    and
    \begin{equation}\label{eq:Q-def-main}
        \mathcal Q_s(a,b):=\left\{\sum_{j\in I}\left(\sum_{i\in I}a_i b_{ij}\right)^{s}\right\}^{1/s}<\infty.
    \end{equation}

    Then, for finite or countable $I$,
    \begin{equation}\label{eq:abstract-main-bound}
        d_{\mathrm K}\!\left(\frac{H-\mathbb{E}[H]}{\sigma},Z\right)
        \leq C_s\frac{\mathcal Q_s(a,b)}{\sigma^2},\qquad Z\sim N(0,1).
    \end{equation}
Here $C_s$ depends only on $s$.
\end{theorem}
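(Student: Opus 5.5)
We use the exchangeable-pair version of Stein's method, in the form adapted to independent inputs (as in Chatterjee's and Lachièze-Rey--Peccati's second-order Poincaré bounds), but with $L^s$ norms in place of $L^2$ so that $1<s\le2$ is allowed. Set $W=(H-\mathbb E H)/\sigma$. For finite $I$, pick a uniform random index $\iota\in I$ and let $W'=W(X^{\{\iota\}})$; then $(W,W')$ is exchangeable and $\mathbb E[W-W'\mid X]=|I|^{-1}\sum_i\mathbb E[D_iH\mid X]/\sigma$. Rather than requiring approximate linearity, we use the Efron--Stein/Chatterjee identity
$$\operatorname{Cov}(g(W),W)=\mathbb E\bigl[g'(W)T\bigr]+R,$$
with
$$T=\frac{1}{2\sigma^2}\sum_{A\subsetneq I}\kappa_{|A|}\sum_{j\notin A}D_jH\,D_jH^{A},$$
where $\kappa_k=\bigl(|I|\binom{|I|-1}{k}\bigr)^{-1}$. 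The weights $\kappa_k$ form a probability over subsets, and $\mathbb E T=1$. The remainder $R$ comes from the Taylor expansion of $g(W)-g(W^{\{j\}})$. Take $g=f_z$, the Stein solution for the indicator $1_{\{\cdot\le z\}}$. Then
$$d_K(W,Z)\le \mathbb E\bigl|1-\mathbb E[T\mid W]\bigr|+\bigl|\mathbb E[\text{Taylor remainder}]\bigr|.$$

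\textbf{Variance term.} Instead of bounding $\operatorname{Var}T$, which would force $s=2$, bound $\|T-\mathbb E T\|_s$. Use the $L^s$ Efron--Stein (von Bahr--Esseen / Burkholder) inequality for independent coordinates with $1<s\le2$:
$$\|T-\mathbb ET\|_s\le C_s\Bigl(\sum_k\|D_kT\|_s^s\Bigr)^{1/s}.$$
The derivative $D_k\bigl(D_jH\,D_jH^A\bigr)$ splits into terms like $D_kD_jH\cdot D_jH^{A}$ and $D_jH\cdot D_kD_jH^{A}$ (plus a $k=j$ diagonal handled by $b_{jj}=c_j$). Hölder with $1/p+1/p^*=1/s$ gives $\|\cdot\|_s\le a_j b_{jk}$. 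The bounds are uniform in $A$ since $X^A$ has the same law as $X$. The convex combination over $A$ then yields
$$\|D_kT\|_s\le C\sigma^{-2}\sum_j a_j b_{jk},$$
so this term is at most $C_s\mathcal Q_s/\sigma^2$.

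\textbf{Remainder term.} For the Kolmogorov distance $f_z'$ is discontinuous, so we follow the Shao--Zhang / Lachièze-Rey--Peccati route of writing $f_z'(w)=wf_z(w)+1-1_{\{w\le z\}}$. The term $wf_z(w)$ is Lipschitz and is controlled by $\mathbb E|W|\,|D_jH|^2|D_jH^A|$-type sums. The indicator part needs a concentration/smoothing argument: bound
$$\mathbb E\Bigl[\sum_j |D_jH|\,|D_jH^A|\,\bigl(1_{\{W\le z\}}-1_{\{W^{j}\le z\}}\bigr)\Bigr],$$
rewriting it as a covariance of $1_{\{W\le z\}}$ with a $D$-derivative of $\sum_j D_jH|D_jH^A|$. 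This is again estimated by the same $L^s$ Efron--Stein step, with Hölder exponents $p$, $p$, $p^*$; the requirement $p\ge 2s$ is what makes these exponents consistent. This step is the \textbf{main obstacle}: keeping every product inside $\|a\|\,\|b\|$ form without ever needing third moments beyond $L^p$. We would try first to reduce it entirely to the estimate $\|D_k(\cdot)\|_s\le a_jb_{jk}$ and to sum via $\ell^s$ Minkowski, mimicking Lachièze-Rey--Peccati's Theorem 4.2 with $L^4$ replaced by $L^{p}$, $L^{p^*}$.

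\textbf{Countable $I$.} Apply the finite case to $H_N=\mathbb E[H\mid X_i,\ i\le N]$. Conditional expectation does not increase $\|D_iH\|$ or $\|D_iD_jH\|$ norms by Jensen, and $D_iH_N=0$ for $i>N$, so the same $a,b$ work. Since $H_N\to H$ in $L^p$ (martingale convergence) and $\operatorname{Var}H_N\to\sigma^2$, letting $N\to\infty$ in the Kolmogorov bound (using continuity of $\Phi$) gives the claim.
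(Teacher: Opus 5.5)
\textbf{Gap: the Taylor remainder.} The problem is exactly the step you flag as the main obstacle, and it is not a matter of bookkeeping. Suppose you treat $wf_z(w)$ as a Lipschitz function and expand $f_z(W)-f_z(W^{\{j\}})$ around $W$, as Lachi\`eze-Rey--Peccati do. The remainder then has the form $\sigma^{-3}\sum_{A}\kappa_{|A|}\sum_{j\notin A}\mathbb E\bigl[(1+|W|)\,|D_jH|^2\,|D_jH^{A}|\bigr]$. This is the source of the sums of third and sixth moments of $\Delta_jf$ in their Kolmogorov bound.

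Such a term is cubic in the differences and scales like $\sigma^{-3}$, so it is not dominated by $C_s\mathcal Q_s(a,b)/\sigma^2$. Worse, it needs moments of order at least $3$, while the hypotheses allow $p=p^*=2s<3$ whenever $s<3/2$. For example, take $H=\sum_{i\le n}\xi_i$ with $\xi_i$ i.i.d., centred, of unit variance, and $\mathbb E|\xi_1|^{2s}<\infty=\mathbb E|\xi_1|^3$. You may take $b_{ij}=0$ for $i\ne j$, which gives $\mathcal Q_s(a,b)/\sigma^2=n^{1/s-1}\Vert\xi_1-\xi_1'\Vert_{2s}^2\to0$. Your estimate of the remainder, by contrast, is $+\infty$. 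No rearrangement of H\"older exponents fixes this: the Taylor remainder must be eliminated, not estimated.

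\textbf{Missing idea: monotonicity.} What you need is the Shao--Zhang $D^*$ device behind Lemma~\ref{lem:gen-pair}. Write $f_z'=g_1+g_2-\Phi(z)$, where $g_1(w)=wf_z(w)$ is nondecreasing and bounded and $g_2=\mathbf 1_{(-\infty,z]}$ is nonincreasing. For $t$ between $0$ and $D_jW$ one has $|f_z'(W-t)-f_z'(W)|\le|g_1(W)-g_1(W^{\{j\}})|+|g_2(W)-g_2(W^{\{j\}})|$. After multiplying by $|D_jW|$, both absolute values can be removed with definite signs.

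Now swap $X_j\leftrightarrow X_j'$. This reverses $D_jW$, preserves $|D_jW^{A}|$ for $j\notin A$, and exchanges $W$ with $W^{\{j\}}$. The whole remainder becomes $\mathbb E[(g_1-g_2)(W)\,T']$ with $T'=\sum_A\kappa_{|A|}\sum_{j\notin A}D_jW\,|D_jW^{A}|$ and $\mathbb ET'=0$. Hence it is at most $2\,\mathbb E\bigl|\mathbb E[T'\mid X]\bigr|$.

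This term is of the same type as your indicator term. Your von Bahr--Esseen step handles it using only $L^p\times L^{p^*}$ H\"older; apply that step to $\mathbb E[T\mid X]$ and $\mathbb E[T'\mid X]$, since $T$ and $T'$ also depend on $X'$. With this change your Chatterjee-weight route goes through.

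\textbf{Comparison with the paper.} The paper reaches the same point with a different exchangeable pair. The Doob martingale increment $D=D_JY_J/\sigma$ satisfies the exact regression $\mathbb E[D\mid X]=W/m$. Lemma~\ref{lem:gen-pair} with $D^*=|D|$ then gives a bound with no remainder at all. The two error terms are the centred sums $\sum_iA_i-2\sigma^2$ and $\sum_iB_i$, controlled by von Bahr--Esseen and Lemma~\ref{lem:proxy-differences}. Your reduction from countable to finite $I$ is the same as the paper's.
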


The proof of Theorem~\ref{thm:abstract} is given in Subsection~\ref{subsec:proof-abstract}.

For finite $I$, we may simply take $a_i=\Vert D_iH\Vert_p$, $c_i=\Vert D_iH\Vert_{p^*}$, $b_{ij}=\Vert D_iD_jH\Vert_{p^*}$ for $i\ne j$, and $b_{ii}=c_i$.
However, for a countable $I$, especially in this paper, we will need some additional effort to find proper moment controls.

\subsection{Berry-Esseen theorem for three Minkowski functionals of Gaussian fields}
Let $d\geq 2$, $W$ is the white noise on $\mathbb{R}^d$, and $q\in L^2(\mathbb{R}^d)$ is a real-valued even kernel function with $\Vert q\Vert_{L^2(\mathbb{R}^d)}=1$.  
We consider the following Gaussian field
\begin{align}\label{Gaussian field}
    f(x)=q*W(x)=\int_{\mathbb{R}^d}q(x-y)W(dy).
\end{align}

Then $f$ is a centered, stationary Gaussian field with unit variance.  
Denote $\ell_c$ for the percolation critical level of $f$, 
and $\{f\geq\ell\}$ is supercritical when $\ell<\ell_c$. 
Under conditions (M1)--(M4) below, $\ell_c$ is finite and, for every $\ell<\ell_c$, the excursion set $\{f\geq\ell\}$ almost surely has a unique unbounded component; 
see \citet{MuirheadVanneuville2020}, \citet{Rivera2021}, and \citet{Severo2022,Severo2024}, also see \citet[Theorem~1.1]{McAuley2026a}.

In this paper we apply several basic assumptions from \citet{McAuley2026a}.

\begin{assumption}\label{ass:McAuley}
    There are an integer $k\geq 4$, a number $\epsilon_0>0$, an exponent $\beta>d$, and a constant $C<\infty$ such that:
    \begin{enumerate}[label=(M\arabic*)]
    \item The covariance $K=q*q$ belongs to $C^{2k+\epsilon_0}$;
    \item $q\in C^3(\mathbb{R}^d)$ and $\partial^\alpha q\in L^2(\mathbb{R}^d)$ for every multi-index $\alpha$ with $\left|\alpha\right|\leq 3$;
    \item For $\left|x\right|\geq 1$, and for every multi-index $\alpha$ with $\left|\alpha\right|\leq 2$,
    \begin{align*}
        |\partial^\alpha q(x)|\leq C\left|x\right|^{-\beta};
    \end{align*}
    \item $q$ is invariant under sign changes and coordinate permutations; moreover, $K=q*q\geq 0$ when $d=2$, while $q\geq 0$ when $d\geq 3$;
    \item Fix a level $\ell<\ell_c$, for every $a>0$,
    \begin{align*}
        n^a\mathbb{P}\!\left(\Lambda_1\text{ intersects a bounded component of }\{f\geq\ell\}\text{ of diameter greater than }n\right)\longrightarrow0.
    \end{align*}
    \end{enumerate}
\end{assumption}

Conditions (M1)--(M3) provide regularity, local moments, and spatial decay; (M4) supplies symmetry and positivity, as well as the known percolation structure. 
Condition (M5) is a technical condition which is related to the uniqueness of the unbounded component of $\{f\geq \ell\}$.
Part of these assumptions will not be applied in our proof directly.
Also, by \citet[Proposition~1.2]{McAuley2026a}, (M1)--(M4) imply (M5) when $\ell<-\ell_c$.
See \citet{McAuley2026a} and references therein for details.

For $\ell<\ell_c$, denote the unique unbounded component by $\{f\geq\ell\}_\infty$. 
Throughout, let $\Lambda_n=[-n,n]^d$ for $n\in\mathbb N$, 
and the observation sets are $\Lambda_n$, closed cubical faces, and finite unions of such faces. 
For any observation set $A\subset\mathbb{R}^d$, the three target functionals are defined as
\begin{align*}
    \mu_{\operatorname{Vol}}(A;f)&=\operatorname{Vol}(A\cap\{f\geq\ell\}_\infty),\\
    \mu_{\operatorname{SA}}(A;f)&=\mathcal H^{d-1}(A\cap\partial\{f\geq\ell\}_\infty),\\
    \mu_{\operatorname{EC}}(A;f)&=\operatorname{EC}(A\cap\{f\geq\ell\}_\infty).
\end{align*}

Here $\operatorname{Vol}$, $\mathcal H^{k}$, and $\operatorname{EC}$ stand for volume, $k$-dimensional Hausdorff measure, and Euler characteristic, respectively.
We abbreviate $\mu_\star(A)=\mu_\star(A;f)$. Here $\star\in\{\operatorname{Vol}, \operatorname{SA}, \operatorname{EC}\}$. 

Let $B_v=v+[0,1]^d, v\in\mathbb{Z}^d$, and write $X_v=W|_{B_v}$.  
Here the restriction means that for any test function $h\in L^2(\mathbb{R}^d)$,
\begin{align*}
    X_v(h)=W(\mathbf{1}_{B_v}h)=\int_{\mathbb{R}^d}\mathbf{1}_{B_v}h(x)W(dx).
\end{align*}

Then for any distinct $v,w\in\mathbb{Z}^d$ and any $h,k\in L^2(\mathbb R^d)$,
\begin{align*}
    \mbox{Cov}(X_v(h),X_w(k))=\int_{\mathbb{R}^d}\mathbf{1}_{B_v}\mathbf{1}_{B_w}h(x)k(x)dx=0,
\end{align*}
thus $X=(X_v)_{v\in\mathbb{Z}^d}$ are independent random elements.

Let $X_v'$ be an independent copy of $X_v$, and let $W^{(v)}$ be obtained from $W$ by replacing $X_v$ with $X_v'$. 
The resampled field and the perturbation are defined as
\begin{equation}\label{eq:block-perturbation}
    f^{(v)}=q*W^{(v)},\qquad p_v=f^{(v)}-f=q*(X_v'-X_v).
\end{equation}

For details of this resampling, see \citet[Section~1.3]{McAuley2026a}.

\begin{remark}[Measurability]\label{rem:field-measurability}
    For each fixed observation set $A\subset\mathbb{R}^d$, the three functionals $\mu_\star(A)$ admit versions measurable with respect to the white-noise blocks $X$; 
    see \citet[Section~1.3]{McAuley2026a}.
\end{remark}

Now we can state our main results on these functionals.
\begin{theorem}\label{thm:volume}
    Under Assumption~\ref{ass:McAuley}, fix $1<s\leq2$ and assume
    
    \begin{equation}\label{eq:beta-vol}
        \beta>3sd.
    \end{equation}
    
    There exist $C_{\operatorname{Vol}}>0$ and $n_0\in\mathbb N$ such that
    \begin{equation}\label{eq:vol-rate}
        d_{\mathrm K}\!\left(
        \frac{\mu_{\operatorname{Vol}}(\Lambda_n)-\mathbb{E}[\mu_{\operatorname{Vol}}(\Lambda_n)]}
        {\sqrt{\operatorname{Var}\left(\mu_{\operatorname{Vol}}(\Lambda_n)\right)}},Z\right)
        \leq C_{\operatorname{Vol}}n^{-d(1-1/s)},\qquad n\geq n_0.
    \end{equation}

\end{theorem}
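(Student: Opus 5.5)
We apply Theorem~\ref{thm:abstract} to $H=\mu_{\operatorname{Vol}}(\Lambda_n)$, viewed as a functional of the independent white-noise blocks $X=(X_v)_{v\in\mathbb Z^d}$, with index set $I=\mathbb Z^d$. We must supply three ingredients: a variance lower bound $\sigma^2\geq c n^d$, first-order bounds $a_v,c_v$ on $\Vert D_vH\Vert_p$ and $\Vert D_vH\Vert_{p^*}$, and second-order bounds $b_{vw}$ on $\Vert D_vD_wH\Vert_{p^*}$. These must be summable enough that $\mathcal Q_s(a,b)\leq C n^{d/s}$. Theorem~\ref{thm:abstract} then gives
\[
d_{\mathrm K}\leq C n^{d/s}/n^{d}=Cn^{-d(1-1/s)}.
\]
For the variance we invoke the nondegenerate CLT of \citet{McAuley2026a}, which yields $\operatorname{Var}(\mu_{\operatorname{Vol}}(\Lambda_n))\asymp n^d$ for $\ell<\ell_c$ under Assumption~\ref{ass:McAuley}. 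We fix $p=2s$ or slightly larger, so $p^*=sp/(p-s)$ is finite.

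\textbf{First-order bound.} For $v$ with $\operatorname{dist}(B_v,\Lambda_n)=r$, the perturbation $p_v=q*(X_v'-X_v)$ has size of order $(1+|x-v|)^{-\beta}$ at $x$, by (M3). Resampling $X_v$ changes $H$ in two ways.
\begin{itemize}
\item \emph{Local change.} The excursion set itself changes near points $x$ where $|f(x)-\ell|\lesssim|p_v(x)|$. Integrating over $x\in\Lambda_n$ and using the bounded density of $f(x)$ together with Gaussian tails of $p_v$, this contributes $\lesssim\int_{\Lambda_n}(1+|x-v|)^{-\beta}dx$ in $L^p$.
\item \emph{Nonlocal change.} Membership of a point in the unbounded component can change. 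By (M5), this requires a bounded component of large diameter, or a pivotal event, near the perturbed region. Following the arm/uniqueness estimates of \citet{McAuley2026a}, this contributes superpolynomially small terms plus the polynomial decay of $p_v$.
\end{itemize}
Combining the two, we aim for
\[
a_v,\,c_v\lesssim (1+\operatorname{dist}(v,\Lambda_n))^{-\beta'}\qquad\text{with }\beta'\text{ slightly less than }\beta-d,
\]
the loss coming from Hölder inequalities and from the sup over the support.

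\textbf{Second-order bound.} For $D_vD_wH$ we need decay in $|v-w|$ as well. If $v$ and $w$ are far apart, the effects of $p_v$ and $p_w$ on the set are approximately additive. The interaction requires either a point $x$ affected by both, giving a factor $(1+|x-v|)^{-\beta}(1+|x-w|)^{-\beta}$ integrated against the level-crossing density, or a connectivity change involving both regions, which again is controlled by (M5). We target
\[
b_{vw}\lesssim (1+|v-w|)^{-\gamma}\,(1+\operatorname{dist}(v,\Lambda_n))^{-\gamma}
\]
with $\gamma$ of order $\beta/3$ (roughly $\beta-2d$ after losses). Under \eqref{eq:beta-vol}, $\beta>3sd$, the exponents are large enough that
\[
\sum_v a_v b_{vw}\lesssim (1+\operatorname{dist}(w,\Lambda_n))^{-\gamma'}
\]
with $\gamma' s>d$. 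Then $\sum_w\bigl(\sum_v a_vb_{vw}\bigr)^s\lesssim n^d$, so $\mathcal Q_s\lesssim n^{d/s}$. The factor $3s$ is where we expect this bookkeeping to land: the loss through Hölder between the $p$ and $p^*$ norms, combined with requiring $s$-th power summability.

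\textbf{Main obstacle.} The hardest step is the second-order estimate for the nonlocal connectivity part. Resampling two far-apart blocks could jointly reconnect or disconnect a large region of the unbounded component. To handle this, we will define a good event on which all bounded components within distance $R\asymp|v-w|^{\theta}$ of $v$ and $w$ have diameter less than $R$. On this event, the changes induced near $v$ and near $w$ are determined locally and cancel in the second difference, up to the smeared tail of $p_v$ beyond distance $R$. The complement of the good event has superpolynomially small probability by (M5), and we absorb it using the $L^p$ boundedness of $H$ restricted to boxes. Tuning $R$ to balance the polynomial tail of $q$ against the superpolynomial (M5) term is expected to produce the exponent condition $\beta>3sd$.
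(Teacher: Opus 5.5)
Your overall architecture matches the paper's: apply Theorem~\ref{thm:abstract} to the blocks $X_v$, show $\mathcal Q_s(a,b)\lesssim n^{d/s}$, and divide by a variance of order $n^d$. However, two steps fail as written.

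The first is the variance. The nondegeneracy result of \citet{McAuley2026a} (their Theorem~1.5) needs $q\geq0$ and $\ell<-\ell_c$. Theorem~\ref{thm:volume} assumes neither. In $d=2$, (M4) only gives $K\geq0$. In $d\geq3$, the range $-\ell_c\leq\ell<\ell_c$ is nonempty whenever $\ell_c>0$ (e.g.\ Bargmann--Fock). The paper closes this gap with \citet[Theorem~2.8(1)]{McAuleyQuasi2026}, which gives $\operatorname{Var}\mu_{\operatorname{Vol}}(\Lambda_n)\asymp(2n)^d$ at every $\ell<\ell_c$.

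The second, and more substantive, problem is that the moment bounds are asserted rather than derived, and the asserted exponents are wrong in a way that matters. The missing idea is to localize: write $H=\sum_{w\in L_n}\xi_w$ with $\xi_w=\mu_{\operatorname{Vol}}(B_w)$. Then use McAuley's bounded envelope $Y_v(w)=\operatorname{Vol}(B_w\cap\{|f-\ell|\leq|p_v|\})+\mathbf 1_{\{w\in\mathcal U_{\mathrm{FR}}(f,p_v)\}}\leq2$. It satisfies $\mathbb EY_v(w)\leq C_\eta R(v,w)^{-\beta+\eta}$, and hence $\|D_v\xi_w\|_r\lesssim R(v,w)^{-(\beta-\eta)/r}$ for every $r$ (Lemma~\ref{lem:volume-envelope}).

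The $1/r$ loss is intrinsic. On the unstable event, whose probability is only bounded by $R^{-\beta+\eta}$, the volume can change by order one. So your first-order exponent ``slightly less than $\beta-d$'' is not available in $L^p$; per cube one cannot expect better than $R^{-\beta/p}$. This loss is exactly what makes the threshold scale like $3sd$. Your stated exponents do not record it, so $3sd$ cannot be read off from your bookkeeping.

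Nor is any good-event or cancellation argument needed for second differences. We have $D_uD_v\xi_w=D_v\xi_w-D_v\xi_w(X^{\{u\}})$, and since $W^{(u)}\overset{d}{=}W$ the same envelope bounds both terms. By symmetry in $u,v$, this gives $\|D_uD_v\xi_w\|_{p^*}\leq C\max\{R(u,w),R(v,w)\}^{-(\beta-\eta)/p^*}\leq C\bigl(R(u,w)R(v,w)\bigr)^{-(\beta-\eta)/(2p^*)}$. Summing over $w$ then yields the decay in $|u-v|$ for free (Lemma~\ref{lem:local-sum}).

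That lemma needs $(\beta-\eta)/p>d$ and $(\beta-\eta)/p^*>2d$. Under $1/p+1/p^*=1/s$ these balance exactly at $p=3s$, $p^*=3s/2$, and both become $\beta>3sd$. With the same estimates, your choice $p=2s$ forces $p^*=2s$ and would require $\beta>4sd$.

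So the ``main obstacle'' you identify, joint reconnection by two distant blocks, dissolves once the functional is split into unit-cube summands. Meanwhile, the step you treat as routine, the first-order $L^p$ decay and its dependence on $p,p^*$, is where the threshold is actually determined.
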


\begin{theorem}\label{thm:surface}
    Under Assumption~\ref{ass:McAuley}, assume also $q\geq0$,
    fix $1<s\leq2$ and put
    \begin{equation}\label{eq:kprime}
        k'=\frac{k(k+1)}2-2.
    \end{equation}

    Assume
    \begin{equation}\label{eq:beta-SA}
        \beta>3sd\,\frac{3k'-1}{k'-2s}.
    \end{equation}

    There exist $C_{\operatorname{SA}}>0$ and $n_0\in\mathbb N$ such that
    \begin{equation}\label{eq:SA-rate}
        d_{\mathrm K}\!\left(
        \frac{\mu_{\operatorname{SA}}(\Lambda_n)-\mathbb{E}[\mu_{\operatorname{SA}}(\Lambda_n)]}
        {\sqrt{\operatorname{Var}\left(\mu_{\operatorname{SA}}(\Lambda_n)\right)}},Z\right)
        \leq C_{\operatorname{SA}}n^{-d(1-1/s)},\qquad n\geq n_0.
    \end{equation}

\end{theorem}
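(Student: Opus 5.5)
We apply Theorem~\ref{thm:abstract} with $I=\mathbb Z^d$, $X_v=W|_{B_v}$ and $H=\mu_{\operatorname{SA}}(\Lambda_n)$. Since $\sigma^2=\operatorname{Var}(\mu_{\operatorname{SA}}(\Lambda_n))\geq c n^d$ (the variance lower bound of order $n^d$ for the surface area, which we take from Section~\ref{sec:variance-positivity}; this is where $q\geq0$ is used), it suffices to show $\mathcal Q_s(a,b)\leq C n^{d/s}$, because $n^{d/s}/n^d=n^{-d(1-1/s)}$. We will look for arrays of the form
\begin{align*}
a_v\leq C\bigl(1+\operatorname{dist}(v,\Lambda_n)\bigr)^{-\gamma},\qquad b_{vw}\leq C\bigl(1+\operatorname{dist}(v,\Lambda_n)\bigr)^{-\gamma}\bigl(1+|v-w|\bigr)^{-\gamma}
\end{align*}
with $\gamma>d$ large. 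For such arrays, $\sum_v a_v b_{vw}\leq C(1+\operatorname{dist}(w,\Lambda_n))^{-\gamma'}$, and summing the $s$-th power over $w$ gives $O(n^d)$. Taking the $1/s$ root then yields the required bound. The condition \eqref{eq:beta-SA} should be exactly what makes $\gamma s>d$ after all Hölder losses.

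For the first-order differences, fix $p=2s$ (so $p^*=2s\cdot s/s=2s$ as well). We decompose $D_v\mu_{\operatorname{SA}}$ into a local part and a nonlocal part. The local part is the change in $\mathcal H^{d-1}(\Lambda_n\cap\{f=\ell\})$ near the region where $p_v$ is non-negligible. The nonlocal part is the change in which level-set components belong to the unbounded cluster. The local part is handled by Kac--Rice type moment bounds: the $p$-th moment of the surface area of $\{f+tp_v=\ell\}$ in a unit cube is finite uniformly in $t$. This uses (M1) with $k\geq4$ derivatives and nondegeneracy of $(f,\nabla f)$; the number of usable moments scales like $k'=k(k+1)/2-2$ from the Hessian/jet dimension count. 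Combining this with the decay $|p_v(x)|\lesssim|x-v|^{-\beta}$ from (M3), the change in surface area in a unit cube $B_u$ is bounded in $L^p$ by $|u-v|^{-\beta\theta}$ for some fraction $\theta$ that depends on how many moments we sacrifice in Hölder. This fraction is the origin of the factor $(k'-2s)/(3k'-1)$.

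The nonlocal part is controlled by (M5). Resampling $X_v$ changes membership in $\{f\geq\ell\}_\infty$ inside $B_u$ only if one of two things happens: a bounded component of diameter at least $|u-v|/2$ touches the region near $u$ or $v$, or the perturbation $\sup_{B_u}|p_v|$ is large enough to alter connectivity. The first event has superpolynomial probability by (M5). The second has probability at most $|u-v|^{-\beta}$ times a moment factor. We sum over $u\in\Lambda_n$ after Hölder.

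For the second-order differences $D_vD_w$, we use the standard device $|D_vD_wH|\leq\min(|D_vH|+|D_vH^{(w)}|,\ |D_wH|+|D_wH^{(v)}|)$. We also use that $D_vD_w\mu(B_u)$ vanishes unless both perturbations affect $B_u$, which costs a factor decaying in both $|u-v|$ and $|u-w|$. Combining this with Cauchy--Schwarz (or Hölder) between the two indicator events gives the product decay above, at the cost of a further power of $\beta$. This cost is the factor $3$ in $3sd$.

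The main obstacle is the local surface-area step. Unlike volume, $\mathcal H^{d-1}$ of a level set is not Lipschitz in the field: it can blow up near critical points at level $\ell$. So we must control $|D_v|$ of the surface area in a cube through moment bounds on the level-set area together with a small-probability event that a critical value lies within $\sup|p_v|$ of $\ell$. My first attempt is to bound that event's probability by $C\,\sup|p_v|$ using a bounded density of critical values, which in turn follows from (M1) and a Kac--Rice estimate on critical points. We then interpolate with the $k'$-moment bound. Tracking exponents in this interpolation should reproduce \eqref{eq:beta-SA}.
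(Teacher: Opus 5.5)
Your overall architecture matches the paper's: the unit-cube decomposition $\mu_{\operatorname{SA}}(\Lambda_n)=\sum_w\mu_{\operatorname{SA}}(B_w)$, arrays with $a_v$ bounded and $b_{vw}$ built from a minimum of two first-order envelopes, and the variance lower bound from Proposition~\ref{prop:SA-positive}. There are, however, two genuine gaps. The first is that the choice $p=p^*=2s$ cannot reach \eqref{eq:beta-SA}. Your arrays require $\theta_p/p>d$, so that $a_v=O(1)$ on $\Lambda_n$, and $\theta_{p^*}/p^*>2d$. The second condition arises because bounding $D_vD_w$ by a minimum of first-order envelopes (or by Cauchy--Schwarz between two events) costs a square root, and $\sum_i b_{ij}$ is bounded uniformly in $n$ only if half of that exponent exceeds $d$. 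With the decay $\theta_r=\beta(k'-r)/(3k'-1)$ you are aiming at, taking $r=p^*=2s$ yields $\beta>4sd(3k'-1)/(k'-2s)$, which is strictly stronger than \eqref{eq:beta-SA}. The factor $3$ is not an extra loss in the second-order step; it comes from balancing the two summability conditions. With $p=3sk'/(k'+s)$ and $p^*=3sk'/(2k'-s)$ one gets $(k'-p)/p=(k'-2s)/(3s)$ and $(k'-p^*)/p^*=2(k'-2s)/(3s)$, so both conditions become exactly \eqref{eq:beta-SA}.

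The second gap is more serious: you never actually obtain the surface-area envelope. Your mechanism is to bound the probability that a critical value lies within $\sup|p_v|$ of $\ell$ and interpolate against the $k'$-th moment of level-set area. This treats $D_v\mu_{\operatorname{SA}}(B_u)$ as supported on a rare event, which holds for the Euler characteristic but not for surface area. The level set moves under every perturbation, so $D_v\mu_{\operatorname{SA}}(B_u)$ is generically nonzero whenever $\partial\{f\geq\ell\}_\infty$ meets $B_u$, even when $u\notin\mathcal U_{\mathrm{FR}}(f,p_v)$. The size of this geometric change depends on how small $|\nabla f|$ gets near the level set, not only on critical values. Indeed, if only the rare event mattered, your scheme would give an exponent of order $\beta(k'-r)/k'$; it has no mechanism that produces $3k'-1$.

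The missing input is a quantitative first-moment bound for the change on the stable event. The paper takes this from Lemma~4.8 of McAuley: $\mathbb E\widehat Y_v(w)\leq C_\delta\mathbb P(w\in\mathcal U_{\mathrm{FR}}(f,p_v))^{(k'-1)/k'}+C_\delta M_2(w)^{(\gamma-\delta)/2}$, with $\gamma=(k'-1)/(3k'-1)$ and $M_2(w)\leq CR^{-2\beta}$; this is where $3k'-1$ originates. It then truncates to $Y=\min\{\widehat Y_v(w),\mathcal L(w)\}$, which retains $\mathbb EY^{k'}\leq C$. Finally it interpolates $\mathbb EY^r\leq(\mathbb EY)^{\theta}(\mathbb EY^{k'})^{1-\theta}$ with $\theta=(k'-r)/(k'-1)$, which gives Lemma~\ref{lem:surface-envelope}. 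For the same reason, your remark that $D_vD_w\mu_{\operatorname{SA}}(B_u)$ vanishes unless both perturbations affect $B_u$ yields no decay for surface area. The second-order bound has to come from the $L^{p^*}$ minimum of first-order envelopes, as in Lemma~\ref{lem:local-sum}.
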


\begin{theorem}\label{thm:euler}
    Under Assumption~\ref{ass:McAuley}, assume also $q\geq0$,
    fix $1<s\leq2$ with $k-1>2s$, and suppose
    \begin{equation}\label{eq:beta-EC}
        \beta>3sd\,\frac{k-1}{k-1-2s}.
    \end{equation}
    There exist $0<c_{\operatorname{EC}}\leq C_{\operatorname{EC}}<\infty$ and $n_0\in\mathbb N$ such that, for all $n\geq n_0$,
    \begin{equation}\label{eq:EC-rate}
        c_{\operatorname{EC}}n^{-d/2}\leq
        d_{\mathrm K}\!\left(
        \frac{\mu_{\operatorname{EC}}(\Lambda_n)-\mathbb{E}[\mu_{\operatorname{EC}}(\Lambda_n)]}
        {\sqrt{\operatorname{Var}\left(\mu_{\operatorname{EC}}(\Lambda_n)\right)}},Z\right)
        \leq C_{\operatorname{EC}}n^{-d(1-1/s)}.
    \end{equation}

\end{theorem}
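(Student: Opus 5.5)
The proof has two parts: the upper bound via Theorem~\ref{thm:abstract}, and the lower bound from the lattice structure of the Euler characteristic.

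\emph{Upper bound.} Apply Theorem~\ref{thm:abstract} with $H=\mu_{\operatorname{EC}}(\Lambda_n)$, $I=\mathbb Z^d$ and $X_v=W|_{B_v}$. The work then splits into three steps.

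First, establish variance growth $\sigma^2=\operatorname{Var}(\mu_{\operatorname{EC}}(\Lambda_n))\geq c n^d$. This is the positivity result announced for Section~\ref{sec:variance-positivity}, and it is where $q\geq0$ is used.

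Second, bound the resampling differences. The aim is
\[
\|D_vH\|_{p}\leq C(1+\operatorname{dist}(v,\Lambda_n))^{-\gamma_1},\qquad \|D_vD_wH\|_{p^*}\leq C(1+|v-w|)^{-\gamma_2}(1+\operatorname{dist}(v,\Lambda_n))^{-\gamma_1},
\]
with exponents $\gamma_1,\gamma_2$ proportional to $\beta$. The argument goes as follows. $D_vH$ is nonzero only if the perturbation $p_v=q*(X_v'-X_v)$ changes the topology of $\{f\geq\ell\}_\infty\cap\Lambda_n$ somewhere. Split into a local part, near $B_v$, and a far part, where $|p_v|$ is small by (M3) with polynomial decay in $\beta$. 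In the local part, bound the EC change by the number of critical points of $f$ and $f^{(v)}$ (and of their restrictions to faces of $\Lambda_n$) in a neighbourhood. Kac--Rice moment bounds for critical points, which need $K\in C^{2k+\epsilon_0}$, give $L^{m}$ control of these counts for $m$ up to about $k-1$. In the far part, a topological change requires a level-$\ell$ critical point with a small critical value gap, or a change of the unbounded component. Use (M5) together with a H\"older split to make the probability of either event polynomially small. Interpolating moment order against probability produces the factor $(k-1)/(k-1-2s)$ in \eqref{eq:beta-EC}. For the second difference, note that $D_vD_wH$ vanishes unless both perturbations act on the same local topology, so it decays in $|v-w|$. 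Bound it by $2\min(|D_vH|,|D_wH|)$ plus a mixed term.

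Third, sum. Set $a_v$, $c_v$ and $b_{vw}$ equal to the bounds above. The condition $\beta>3sd\cdot(\dots)$ makes $\sum_w(1+|v-w|)^{-\gamma_2}$ finite and $\sum_v(\sum_w a_w b_{wv})^s\lesssim n^d$. Hence $\mathcal Q_s\lesssim n^{d/s}$, and dividing by $\sigma^2\gtrsim n^d$ gives $n^{-d(1-1/s)}$.

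\emph{Lower bound.} Use integrality. $\mu_{\operatorname{EC}}(\Lambda_n)$ is integer valued, so its normalised version lives on the lattice $(\mathbb Z-\mathbb E H)/\sigma$ with spacing $1/\sigma\asymp n^{-d/2}$. Its distribution function therefore jumps by $P(H=m)$ at each atom. Compare at $z$ just below and at an atom, and at the midpoint between atoms. Since $\Phi$ varies by only about $\phi(z)/\sigma$ over a cell, the Kolmogorov distance is at least roughly $\frac12\max_m P(H=m)$ minus $O(1/\sigma)$ corrections. A cleaner route is to take $z_m$ at the midpoint of the gap between consecutive atoms near the mean. Then $F(z_m)$ stays constant across the gap while $\Phi$ increases by $\approx\phi(0)/\sigma$, so one of the two endpoints is off by at least $\phi(0)/(2\sigma)\geq c n^{-d/2}$. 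This needs only $\sigma^2\leq Cn^d$, which follows from the upper moment bounds via Efron--Stein, together with the fact that $H$ is integer valued.

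The main obstacle is the second step, uniform $L^{p^*}$ control of $D_vD_wH$ for the Euler characteristic. Topology changes are nonlocal through the unbounded component, and Kac--Rice moments are only available up to order $k-1$. I would handle this first via a stability lemma: on the event that no critical point of $f$ restricted to strata of $\Lambda_n$ has a value within $\|p_v\|_\infty$ of $\ell$, the EC is unchanged by Morse theory. I would then bound the probability of this bad event by Kac--Rice.
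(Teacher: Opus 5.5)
The gap is in the upper bound, at exactly the step you flag as the main obstacle. To get $\mathcal Q_s(a,b)\lesssim n^{d/s}$, the mixed bounds $b_{vw}$ must decay in $|v-w|$. If they are merely bounded for $v,w\in\Lambda_n$, then $\sum_v a_vb_{vw}$ is of order $n^d$ and $\mathcal Q_s$ is only $O(n^{d+d/s})$, so the Kolmogorov bound degrades to $O(n^{d/s})$.

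Your bound of $D_vD_wH$ by $2\min(|D_vH|,|D_wH|)$ plus a mixed term gives no such decay. Both $\|D_vH\|_p$ and $\|D_wH\|_p$ are of order one whenever $v,w\in\Lambda_n$, so the minimum has to be taken site by site, not for the global functional.

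The stability lemma you propose does not repair this, for two reasons.
\begin{itemize}
\item The event that some stratified critical point of $f$ in $\Lambda_n$ has value within $\|p_v\|_\infty$ of $\ell$ has probability tending to one. There are order $n^d$ critical points, and $\|p_v\|_\infty$ is of order one because $p_v$ is not small near $B_v$.
\item Morse stability inside $\Lambda_n$ says nothing about whether a piece of $\{f\geq\ell\}\cap\Lambda_n$ joins or leaves the unbounded component through connections outside $\Lambda_n$.
\end{itemize}

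The missing idea is localisation through additivity of the Euler characteristic. Order the unit cubes of $\Lambda_n$ lexicographically and apply inclusion--exclusion. This gives $\mu_{\operatorname{EC}}(\Lambda_n)=\sum_{w\in I_n}\xi_w$ with $\xi_w=\mu_{\operatorname{EC}}(B_w)-\mu_{\operatorname{EC}}(\Gamma_{n,w})$, where $\Gamma_{n,w}$ is the union of the upper faces of $B_w$ shared with other cubes. Hence $D_vD_uH=\sum_w D_vD_u\xi_w$. The same decomposition is what makes your local/far split of $D_vH$ rigorous.

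Each score has the envelope $Y_v(w)=C_d\bigl(N_w(f)+N_w(f^{(v)})\bigr)\mathbf 1\{w\in\mathcal U_{\mathrm{FR}}(f,p_v)\}$. This works because off the finite-range unstable set a stratified homeomorphism carries $\{f\geq\ell\}_\infty\cap B_w$ onto $\{f^{(v)}\geq\ell\}_\infty\cap B_w$ and preserves $\Gamma_{n,w}$. The non-locality through the unbounded component is absorbed here, cube by cube, via $\mathbb P(w\in\mathcal U_{\mathrm{FR}}(f,p_v))\lesssim R(v,w)^{-\beta+\delta}$, which uses (M5) and $f^{(v)}\overset{d}{=}f$.

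Since resampled fields have the law of $f$, the same envelope controls $D_v\xi_w(X^{\{u\}})$. This gives $\|D_vD_u\xi_w\|_{p^*}\lesssim\min\{R(v,w)^{-\theta_{p^*}/p^*},R(u,w)^{-\theta_{p^*}/p^*}\}$. Summing over $w$ then produces a $b_{vu}$ that decays in $|v-u|$ and is summable in $v$ once $\theta_{p^*}/p^*>2d$. H\"older interpolation against the $(k-1)$-st critical-point moment, with $p=3s(k-1)/(k+s-1)$ and $p^*=3s(k-1)/(2k-s-2)$, turns the two summability conditions into the stated condition on $\beta$.

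Your lower bound, by contrast, is correct, and it takes a different route from the paper's. The paper uses Chebyshev and pigeonholing to find an integer atom of mass $\gtrsim1/\sigma_n$, then compares the jump there with $\Phi$. You instead use that the distribution function is flat on a gap of length $1/\sigma_n$ near the origin, while $\Phi$ rises by $\gtrsim1/\sigma_n$ across it. Your route avoids the atom-mass estimate; both need only integrality and $\sigma_n^2\lesssim n^d$.
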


The proofs of Theorems~\ref{thm:volume}--\ref{thm:euler} are given in Subsection~\ref{subsec:geometric-applications}.

\begin{remark}
    The constants in (\ref{eq:vol-rate}), (\ref{eq:SA-rate}) and (\ref{eq:EC-rate}) only depend on the field, the level $\ell$, the dimension $d$, 
    and parameters in our assumptions. 
\end{remark}

\begin{remark}\label{rem:parameter-matching}
In Theorem~\ref{thm:volume}--\ref{thm:euler}, let $s\downarrow1$ and $s=2$, respectively. 
The spatial decay parameter $\beta$ in (M3) of Assumption~\ref{ass:McAuley} satisfies the following conditions.
\begin{center}
\renewcommand{\arraystretch}{1.5}
\begin{tabular}{@{}lcc@{}}
\hline
Functional & Limiting threshold as $s\downarrow1$ & $s=2$: rate $n^{-d/2}$\\
\hline
Volume, $k\geq4$ & $\beta>3d$ & $\beta>6d$\\
Surface area, $k\geq4$ & $\displaystyle\beta>\frac{3d(3k'-1)}{k'-2}$
                     & $\displaystyle\beta>\frac{6d(3k'-1)}{k'-4}$\\
Euler characteristic & $\displaystyle k\geq4,\ \beta>\frac{3d(k-1)}{k-3}$
                     & $\displaystyle k\geq6,\ \beta>\frac{6d(k-1)}{k-5}$\\
\hline
\end{tabular}
\end{center}
where $k'=k(k+1)/2-2$. 

Thus our decay conditions on $q$ correspond to those in \citet[Theorem~1.4]{McAuley2026a} when $s\downarrow1$.
At $s=2$, the rate is $n^{-d/2}$ for all these functionals, which is optimal in EC case.
\end{remark}

\begin{remark}\label{rem:variance}
    There is a tradeoff between the assumption of decay exponent $\beta$ and convergence rate in our Berry-Esseen bounds. 
    The Bargmann--Fock field satisfies the regularity, symmetry, positivity, and decay conditions above, 
    and Mat\'{e}rn fields do so when their smoothness parameter is sufficiently large, see Example~1 of \citet{McAuley2026a}.
\end{remark}

\begin{remark}\label{rem:vol-SA-positive}
Under Assumption~\ref{ass:McAuley} and the respective limit thresholds in Remark~\ref{rem:parameter-matching}, 
\citet[Theorem~1.4]{McAuley2026a} proved that
\begin{align*}
    \frac{\operatorname{Var}\mu_\star(\Lambda_n)}{|\Lambda_n|}
    \longrightarrow\sigma_\star(\ell)^2\in[0,\infty),
    \qquad \star\in\{\operatorname{Vol},\operatorname{SA},\operatorname{EC}\},
\end{align*}
where $|\Lambda_n|=(2n)^d$.  
In the same paper, \citet[Theorem~1.5]{McAuley2026a} proves $\sigma_\star(\ell)^2>0$ under the additional assumptions $q\geq0$ and $\ell<-\ell_c$ for volume and surface area, 
further assume $d=2$ for Euler characteristic.
Subsequently, \citet[Theorem~2.8]{McAuleyQuasi2026} established strict positivity for volume at every fixed $\ell<\ell_c$ under a similar condition to Assumption~\ref{ass:McAuley}.

For the other two functionals,
Proposition~\ref{prop:SA-positive} and Proposition~\ref{prop:EC-positive} below
establish strict positivity of $\sigma_\star(\ell)$ for any $\ell<\ell_c$ and $d\geq 2$ under Assumption~\ref{ass:McAuley}, $q\geq0$, and the
respective conditions on $\beta$.
\end{remark}

\begin{proposition}\label{prop:SA-positive}
    Under Assumption~\ref{ass:McAuley}, assume also $q\geq0$,
    let $k'=k(k+1)/2-2$ and suppose that
    \begin{equation}\label{eq:SA-positive-decay}
        \beta>\frac{3d(3k'-1)}{k'-2}.
    \end{equation}
    Then
    \begin{equation}\label{eq:SA-positive-limit}
        \frac{\operatorname{Var}\mu_{\operatorname{SA}}(\Lambda_n)}{(2n)^d}
        \longrightarrow\sigma_{\operatorname{SA}}(\ell)^2\in(0,\infty).
    \end{equation}
\end{proposition}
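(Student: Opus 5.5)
The convergence $\operatorname{Var}\mu_{\operatorname{SA}}(\Lambda_n)/(2n)^d\to\sigma_{\operatorname{SA}}(\ell)^2\in[0,\infty)$ is already \citet[Theorem~1.4]{McAuley2026a} under the decay threshold \eqref{eq:SA-positive-decay}; this is the $s\downarrow1$ threshold of Remark~\ref{rem:parameter-matching}. So the content of the proposition is strict positivity, $\sigma_{\operatorname{SA}}(\ell)^2>0$, for every $\ell<\ell_c$ rather than only for $\ell<-\ell_c$.

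I would obtain a lower bound from the martingale (Efron--Stein dual) decomposition over the white-noise blocks $X_v$. Order $\mathbb Z^d$ lexicographically and let $\mathcal F_v$ be generated by the blocks up to $v$. Then
\[
\operatorname{Var}\mu_{\operatorname{SA}}(\Lambda_n)=\sum_v\mathbb E\bigl[(\mathbb E[\mu_{\operatorname{SA}}(\Lambda_n)\mid\mathcal F_v]-\mathbb E[\mu_{\operatorname{SA}}(\Lambda_n)\mid\mathcal F_{v-}])^2\bigr].
\]
The $v$-th martingale increment equals $\mathbb E[D_v\mu_{\operatorname{SA}}(\Lambda_n)\mid\mathcal F_v]$, with the resampled copy integrated out. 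Following the strategy of \citet[Theorem~1.5]{McAuley2026a}, I would restrict to interior $v$ with $\operatorname{dist}(v,\partial\Lambda_n)\ge n/2$. By stationarity and the decay of $D_v$ (quantified in Section~\ref{sec:mixed-moment-tools}, using $\beta$ large), each such increment converges in $L^2$, uniformly in $v$, to the stationary increment $\mathbb E[\Delta_0\mid\mathcal F_0]$. Here $\Delta_0$ is the infinite-volume "effect of resampling $X_0$" on the surface area of the unbounded component. It is well defined because the interaction decays, and by (M5) resampling does not change the unbounded component far from the origin except on an event of superpolynomially small probability. Summing over $\asymp n^d$ interior sites gives $\sigma_{\operatorname{SA}}(\ell)^2\ge c\,\mathbb E[(\mathbb E[\Delta_0\mid\mathcal F_0])^2]$. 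It therefore suffices to show that $\mathbb E[\Delta_0\mid\mathcal F_0]$ is not almost surely zero.

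For non-degeneracy I would use positivity $q\ge0$. Since $q\ge 0$, raising $X_0$ by a positive multiple of a bump adds a nonnegative perturbation $p_0$ to $f$. This monotonically enlarges $\{f\ge\ell\}$, and hence enlarges $\{f\ge\ell\}_\infty$. The difficulty compared with volume is that surface area is not monotone. I would therefore construct a local event of positive probability on which the effect has a definite sign. Condition on $\mathcal F_{0-}$ and on the field outside a neighbourhood of $B_0$. With positive probability the unbounded component passes near the origin, and there is a small bounded component of $\{f\ge\ell\}$ (say a single nondegenerate blob, using the $C^{2k}$ regularity (M1) and Bulinskaya-type genericity) that lies near the unbounded component but is disjoint from it. On this event, a suitable choice of $X_0$ (in a positive-probability set, by Cameron--Martin/support of the Gaussian) merges the blob into the unbounded component through a thin neck. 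This changes $\mathcal H^{d-1}(\partial\{f\ge\ell\}_\infty)$ by roughly the blob's surface area, which is a strictly positive amount bounded below. Alternatively, a choice of $X_0$ that disconnects a region has the opposite sign. Comparing two such choices shows that the conditional expectation cannot vanish identically. The argument must work for all $\ell<\ell_c$. The key point is to avoid needing $-\ell<\ell_c$, which \citet{McAuley2026a} used to guarantee that complements behave well. Instead the argument should rely only on uniqueness of the unbounded component and (M5) at level $\ell$.

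The main obstacle is this last step. The change $\Delta_0$ is a conditional expectation over all other configurations, so the local surgery must be shown to produce a nonzero average, not just a nonzero value. I would handle this with a Radon--Nikodym/positivity argument. Write $\mathbb E[\Delta_0\mid\mathcal F_0]$ as a function $g$ of $X_0$, and exhibit two disjoint positive-measure sets of $X_0$ values on which $g$ differs by at least a constant times the probability of the favourable outer configuration. This makes $g$ non-constant, hence nonzero in $L^2$, since its mean over $X_0$ is zero. Controlling errors from the tails of $p_0$ will use the moment bounds on $D_v\mu_{\operatorname{SA}}$ from Section~\ref{sec:mixed-moment-tools}, under \eqref{eq:SA-positive-decay}.
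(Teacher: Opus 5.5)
Your reduction to showing that $\mathbb E(\Delta_0\mid\mathcal F_0)$ is not a.s.\ zero is correct; it is also what underlies Lemma~\ref{lem:Gaussian-response}, via \citet[Theorem~2.2]{McAuley2026a}. The gap is the non-degeneracy step itself, which is the whole content of the proposition, and the argument you sketch for it does not work.

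The function $g(x_0)=\mathbb E[\Delta_0\mid X_{\prec0},X_0=x_0]$ integrates out every block $X_v$ with $v\succ0$. These include neighbours of $B_{\mathbf 0}$ such as $v=e_1,e_d$, and through $q$ they shape $f$ throughout a neighbourhood of the origin. So your ``favourable outer configuration'' (the unbounded component near the origin with a detached blob beside it) is not fixed by the conditioning. The difference $g(x^{(1)})-g(x^{(2)})$ is an average over all future configurations of $\mu_{\operatorname{SA}}(\cdot\,;x^{(1)})-\mu_{\operatorname{SA}}(\cdot\,;x^{(2)})$. On the favourable event you may gain $\ge c\,\mathbb P(\text{favourable})$. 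On its complement the difference is of order one and has no sign, because the surface area of the unbounded component is not monotone under the nonnegative perturbation $q*(x^{(1)}-x^{(2)})$: boundary pieces move, lakes shrink, other bounded components are absorbed. Nothing rules out cancellation, and you give no estimate making the favourable part dominate.

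The surgery is also not local. For $h$ supported in $B_{\mathbf 0}$, $q*h$ is smooth at the scale of $q$ and spreads over the support of $q$ (all of $\mathbb R^d$ for Bargmann--Fock). So the whole nearby boundary moves at the same order as the blob's contribution, and the change is not ``roughly the blob's surface area''.

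The paper avoids any signed local analysis. Lemma~\ref{lem:Gaussian-response} tests $\mathbb E(\Delta_B\mid\mathcal G_B)$, for a large block $B=\Lambda_M$, against $\Gamma_h=\exp\{W(h)-\tfrac12\|h\|_2^2\}$. By Cameron--Martin this pairing equals the infinite-volume response $R_{\operatorname{SA}}(h)$ to the deterministic shift $q*h$, and $|R_{\operatorname{SA}}(h)|^2\le|B|e^{\|h\|_2^2}\sigma^2_{\operatorname{SA}}(\ell)$.

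It then takes $h_m=(u/a_q)\xi_m$, so that $\rho_m\approx u$ on $\Lambda_{m-R}$, and splits the response into three regions:
\begin{itemize}
\item In the interior, Kac--Rice bounds the shifted surface density by $\varphi(\ell-u/2)(\mathbb E|\nabla f(\mathbf 0)|+Cu)\le c_{\operatorname{SA}}(\ell)/4$ for large $u$, against the unshifted density $c_{\operatorname{SA}}(\ell)>0$.
\item The annulus costs $O(m^{d-1})$ by uniform moment bounds.
\item The exterior costs $O(m^{d-1})$ by Lemma~\ref{lem:shift-local-estimates}. This rests on Lemma~\ref{lem:FR-nonnegative}: for $\rho_m\ge0$ the finite-range unstable set involves only bounded components of the \emph{unperturbed} set $\{f\ge\ell\}$, so (M5) at level $\ell$ suffices and $\ell<-\ell_c$ is not needed.
\end{itemize}
Hence $R_{\operatorname{SA}}(h_m)<0$. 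The missing idea in your proposal is this comparison of a macroscopic density under a Cameron--Martin shift, in place of a signed local change.
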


\begin{proposition}\label{prop:EC-positive}
    Under Assumption~\ref{ass:McAuley}, assume also $q\geq0$ and
    \begin{equation}\label{eq:EC-positive-decay}
        \beta>\frac{3d(k-1)}{k-3}.
    \end{equation}
    Then
    \begin{equation}\label{eq:EC-positive-limit}
        \frac{\operatorname{Var}\mu_{\operatorname{EC}}(\Lambda_n)}{(2n)^d}
        \longrightarrow\sigma_{\operatorname{EC}}(\ell)^2\in(0,\infty).
    \end{equation}
\end{proposition}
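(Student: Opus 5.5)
The limit $\sigma_{\operatorname{EC}}(\ell)^2=\lim_n \operatorname{Var}\mu_{\operatorname{EC}}(\Lambda_n)/(2n)^d$ exists and is finite by \citet[Theorem~1.4]{McAuley2026a}. That theorem applies because \eqref{eq:EC-positive-decay} is exactly their Euler characteristic threshold, as noted in Remark~\ref{rem:parameter-matching}. So the only thing to prove is strict positivity, and I will get it from a lower bound on the variance via white-noise resampling.

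The approach is a martingale (Efron--Stein type) lower bound. Order the blocks $v\in\mathbb Z^d$ and write $H_n=\mu_{\operatorname{EC}}(\Lambda_n)$. Then
\[
\operatorname{Var}H_n\geq\sum_{v\in\Lambda_{n/2}\cap\mathbb Z^d}\operatorname{Var}\bigl(\mathbb E[H_n\mid\mathcal F_v]-\mathbb E[H_n\mid\mathcal F_{v^-}]\bigr)
\]
for a suitable filtration. By stationarity, it is enough to show that for each interior $v$ the single-block martingale increment has second moment at least $c>0$, uniformly in $n$.

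Equivalently, following the strategy of \citet[Theorem~1.5]{McAuley2026a}, I would show that $\mathbb E[D_vH_n\mid X_v,X_v']$ is nondegenerate. Concretely, I want a positive-probability event $G$ for $(X_v,X_v')$ on which the local contributions to $H_n$ differ by a fixed nonzero amount, whatever the outside configuration typically looks like. The construction uses $q\geq0$. Choose a smooth perturbation $h$ supported in $B_v$ such that $q*h$ creates a small bounded "handle" or "cavity" in the unbounded component near $v$. For example, take $h=-M\varphi$ with a bump $\varphi$; this carves a hole in $\{f\geq\ell\}_\infty$ that is strictly inside the component, which changes the Euler characteristic by $(-1)^{d-1}$ while leaving the unbounded component unbounded.

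The key points are as follows. First, for $\ell<\ell_c$, with uniformly positive probability the neighbourhood of $v$ lies deep inside $\{f\geq\ell\}_\infty$ with margin. This needs $q\geq0$, so that a positive push $q*(\text{bump})$ monotonically raises the field (Cameron--Martin), together with the finite energy of the block. Second, a nested perturbation then produces a single extra bounded complementary component. Third, the effect on far-away topology is negligible. This last point is quantified by the decay of $q$ and the Kac--Rice moment bounds for critical points, namely the $k$-th moment bounds on $D_vH$ and on its far-field part that the paper's auxiliary lemmas supply under \eqref{eq:EC-positive-decay}. The far-field error, summed over blocks, is then $o(1)$ per block.

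Combining these gives $\mathbb E[(\mathbb E[D_vH_n\mid X_v,X'_v])^2]\geq c$, and hence $\operatorname{Var}H_n\geq c'n^d$.

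The main obstacle is the locality step. I must show that changing $X_v$ affects $\mu_{\operatorname{EC}}$ mainly through a bounded neighbourhood. The perturbation $p_v=q*(X'_v-X_v)$ is nonzero everywhere, so far-away critical points can move and be created or destroyed, and membership in the unbounded component can change globally. I would handle this by splitting $D_vH_n$ into a near part, within distance $R$, and a far part. The far part is controlled by the stability of critical points under $C^2$-small perturbations: the probability of a near-degenerate critical point, via Kac--Rice with $k$ derivatives, combined with the decay $|\partial^\alpha q|\leq C|x|^{-\beta}$. The component-membership changes are controlled by (M5). The exponent condition \eqref{eq:EC-positive-decay} should be exactly what makes the Hölder exponents in this split balance.
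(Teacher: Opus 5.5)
Your reduction is sound. Since $D_vY_v=\mathbb E[D_vH_n\mid\mathcal F_v,X_v']$ and $\mathbb E[(D_vY_v)^2]=2\mathbb E[Y_v^2]$, conditional Jensen gives $\operatorname{Var}H_n\geq\frac12\sum_v\mathbb E[(\mathbb E[D_vH_n\mid X_v,X_v'])^2]$. This is essentially the quantity that Lemma~\ref{lem:Gaussian-response} tests. The gap is the step that is supposed to show $\mathbb E[D_vH_n\mid X_v,X_v']$ is nondegenerate.

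That conditional expectation averages over \emph{all} other blocks, including those adjacent to $v$. Your good event (the neighbourhood of $v$ lies deep inside $\{f\geq\ell\}_\infty$ with margin) is not measurable with respect to $(X_v,X_v')$, and its probability is bounded away from one. On its complement $D_vH_n$ is of order one with no sign control. The Euler characteristic, unlike the volume, is not monotone in the field, so nothing prevents these contributions from cancelling the $(-1)^{d-1}$ you produce on the good event.

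Your claim that the far-field error is ``$o(1)$ per block'' has no small parameter behind it. The envelopes of Lemma~\ref{lem:EC-envelope} only give $\sum_w\mathbb E|D_v\xi_w|<\infty$, an $O(1)$ total of the same order as the local effect you want to detect. Even the local effect is not fixed. Because $q*h$ is not localized, a hole carved by $-M\varphi$ also destroys whatever topology of $\{f\geq\ell\}_\infty$ lay in the lowered region. Its mean effect therefore involves Euler densities at different levels, and whether that mean vanishes is exactly the question. Finally, if you make the surgery precise through a deterministic nonpositive shift, you are in the case where the unstable set keeps the term with bounded components of $\{f+p\geq\ell\}$ (apply Lemma~\ref{lem:FR-nonnegative} to $(f+p,-p)$). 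That term is what forced $\ell<-\ell_c$ in McAuley's argument.

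The paper avoids any good-event argument. The Cameron--Martin identity $\mathbb E[\Gamma_h\Delta_B]=R_{\operatorname{EC}}(h)$ turns nondegeneracy of $\mathbb E[\Delta_B\mid\mathcal G_B]$ into an \emph{exact} deterministic-shift response. The paper then uses a large \emph{nonnegative} bump $\rho_m\approx u$ on $\Lambda_{m-R}$. This makes $R_{\operatorname{EC}}(h_m)$ equal to $\Delta(2r)^d$, with $r=m-R$ and $\Delta=c_{\operatorname{EC}}(\ell-u)-c_{\operatorname{EC}}(\ell)$, up to an error $\varepsilon_R(2r)^d+C_{u,R}(m^{d-1}+1)$ with $\varepsilon_R\to0$. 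The interior is handled by Lemma~\ref{lem:EC-local-lift}. The exterior is handled by Lemmas~\ref{lem:FR-nonnegative} and~\ref{lem:shift-local-estimates}, where \eqref{eq:EC-positive-decay} gives summability.

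The ingredient your plan lacks is the fact that $c_{\operatorname{EC}}(\ell-u)\neq c_{\operatorname{EC}}(\ell)$ for some $u$. The paper gets this from Lemma~\ref{lem:EC-cavity-nonvanishing}, which shows $c_{\operatorname{EC}}(-u)\neq0$ for large $u$. That proof uses mass transport over maximal fillings of sublevel components, the Alexander-duality/Morse bound $\operatorname{EC}(D)\geq1-N_{\mathrm{bad}}(C)$, and a Kac--Rice estimate for non-minimum critical points below $-u$. Without an argument that computes, or at least certifies as nonzero, an \emph{averaged} Euler-characteristic response, the local surgery does not close.
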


The proofs of Propositions~\ref{prop:SA-positive} and~\ref{prop:EC-positive} are given in
Subsection~\ref{subsec:SA-variance-positive} and Subsection~\ref{subsec:EC-variance-positive}, respectively.

\section{Auxiliary lemmas}\label{sec:mixed-moment-tools}
In this section, we state the technical lemmas used in the proofs of the main theorems.
Lemmas~\ref{lem:doob-identities}--\ref{lem:uniform-low-moment} provide moment estimates used in Theorem~\ref{thm:abstract};
Lemmas~\ref{lem:gen-pair} and~\ref{lem:local-sum} derive Berry--Esseen bounds;
Lemmas~\ref{lem:volume-envelope}--\ref{lem:EC-envelope} give moment estimates of resampling differences.
The proofs of these lemmas are given in Section~\ref{sec:lemma-proofs}.

\subsection{Exchangeable pair method}
Let $X=(X_i)_{i\in I}$ be a group of independent random elements.
First we consider $I=[m], m\in\mathbb{N}$.
Under the assumption of Theorem~\ref{thm:abstract}, replace $H$ by $H-\mathbb{E}H$.
This replacement does not change any resampling difference $D_iH$ or $D_iD_jH$.
Define
\begin{align*}
    \mathcal{F}_i=\sigma(X_1,\ldots,X_i),\qquad Y_i=\mathbb{E}[H\mid\mathcal{F}_i]-\mathbb{E}[H\mid\mathcal{F}_{i-1}].
\end{align*}

Choose a measurable version $Y_i=h_i(X_1,\ldots,X_i)$ and define
\begin{align*}
    Y_i^{(i)}=h_i(X_1,\ldots,X_{i-1},X_i'),\qquad D_iY_i=Y_i-Y_i^{(i)}.
\end{align*}

\begin{lemma}\label{lem:doob-identities}
    Let $J$ be uniform on $[m]$ and independent of all other variables.  
    For $\sigma^2=\operatorname{Var} H$, let
    \begin{equation}\label{eq:pair-construction}
        W=H/\sigma,\quad W'=H(X^{\{J\}})/\sigma,\quad D=D_JY_J/\sigma.
    \end{equation}

    Then
    \begin{equation}\label{eq:doob-joint-symmetry}
        (W,W',D,|D|)\stackrel{d}{=}(W',W,-D,|D|),
    \end{equation}
    and
    \begin{equation}\label{eq:regression-doob}
        \mathbb{E}[D\mid X]=\frac{1}{m}W.
    \end{equation}

    Moreover, for every $1\leq u\leq p$,
    \begin{equation}\label{eq:DiYi-contract}
        \Vert D_iY_i\Vert_{u}\leq\Vert D_iH\Vert_{u},
    \end{equation}
    and, for $j\ne i$,
    \begin{equation}\label{eq:DjDiYi-contract}
        \Vert D_jD_iY_i\Vert_{u}\leq\Vert D_jD_iH\Vert_{u}
        \quad\text{if }j<i,
        \qquad D_jD_iY_i=0\quad\text{if }j>i.
    \end{equation}

\end{lemma}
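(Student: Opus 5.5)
The plan is to verify the three groups of claims directly from the martingale decomposition $H=\sum_{i=1}^m Y_i$, which holds because $H$ has been centred. The other facts used are the martingale property $\mathbb{E}[Y_i\mid\mathcal F_{i-1}]=0$ and the independence of $X_1,\dots,X_m,X_1',\dots,X_m'$. Throughout I write $g_i(x_1,\dots,x_i)=\mathbb{E}[H(x_1,\dots,x_i,X_{i+1},\dots,X_m)]$, so that $\mathbb{E}[H\mid\mathcal F_i]=g_i(X_1,\dots,X_i)$ and $h_i=g_i-g_{i-1}$ is a valid measurable version.

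\emph{Exchangeability (\ref{eq:doob-joint-symmetry}).} Condition on $J=j$ and consider the map $T_j$ that swaps $X_j$ and $X_j'$ while fixing all other coordinates. Since $X_j,X_j'$ are i.i.d.\ and independent of everything else, $T_j$ preserves the joint law of $(X,X')$. Under $T_j$ the configuration $X$ becomes $X^{\{j\}}$ and vice versa, so $W\leftrightarrow W'$. Also $D=(h_j(X_1,\dots,X_j)-h_j(X_1,\dots,X_{j-1},X_j'))/\sigma$ is mapped to $-D$, and hence $|D|$ is unchanged. Averaging over the independent uniform $J$ gives (\ref{eq:doob-joint-symmetry}).

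\emph{Regression identity (\ref{eq:regression-doob}).} Write
\[
\mathbb{E}[D\mid X]=\frac{1}{m\sigma}\sum_{i=1}^m\Bigl(Y_i-\mathbb{E}\bigl[h_i(X_1,\dots,X_{i-1},X_i')\mid X\bigr]\Bigr).
\]
Because $X_i'$ is independent of $X$ and has the same law as $X_i$, the conditional expectation equals $\int h_i(X_1,\dots,X_{i-1},x)\,P_{X_i}(dx)=\mathbb{E}[Y_i\mid\mathcal F_{i-1}]=0$. Hence $\mathbb{E}[D\mid X]=\frac{1}{m\sigma}\sum_i Y_i=\frac{H}{m\sigma}=\frac{W}{m}$.

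\emph{Contractions (\ref{eq:DiYi-contract}) and (\ref{eq:DjDiYi-contract}).}
\begin{itemize}
\item The term $g_{i-1}(X_1,\dots,X_{i-1})$ does not involve $X_i$, so $D_iY_i=g_i(X_1,\dots,X_i)-g_i(X_1,\dots,X_{i-1},X_i')$.
\item Integrating out $X_{i+1},\dots,X_m$ shows that this equals $\mathbb{E}[D_iH\mid X_1,\dots,X_i,X_i']$.
\item Conditional Jensen then gives $\Vert D_iY_i\Vert_u\leq\Vert D_iH\Vert_u$ for $1\leq u\leq p$, which is finite since $H\in L^p$.
\item For $j<i$, the same reasoning applied to the four-term difference gives $D_jD_iY_i=\mathbb{E}[D_jD_iH\mid X_1,\dots,X_i,X_i',X_j']$. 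Again $g_{i-1}$ drops out, because it does not involve $X_i$ and the $D_i$ difference kills it. Conditional Jensen yields the bound.
\item For $j>i$, $Y_i$ is a function of $X_1,\dots,X_i$ only, so replacing $X_j$ changes nothing and $D_jD_iY_i=0$.
\end{itemize}

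The only delicate point is the identification of $D_iY_i$ and $D_jD_iY_i$ as conditional expectations of the corresponding differences of $H$. This needs Fubini and the measurable version $h_i$ chosen via $g_i$. I expect it to be routine once the version is fixed as above; the remaining steps are direct computations.
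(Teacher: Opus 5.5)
Your proposal is correct and follows essentially the same route as the paper: you swap $X_J$ and $X_J'$ for the symmetry \eqref{eq:doob-joint-symmetry}, use $\mathbb{E}[Y_i^{(i)}\mid X]=0$ for the regression identity, and write $D_iY_i=\mathbb{E}[D_iH\mid X_1,\ldots,X_i,X_i']$ and, for $j<i$, $D_jD_iY_i=\mathbb{E}[D_jD_iH\mid X_1,\ldots,X_i,X_i',X_j']$ before applying conditional Jensen. Your explicit version $h_i=g_i-g_{i-1}$ just makes the paper's ``integration over future coordinates'' step concrete, and it costs no generality, since any other version agrees with it almost surely at the resampled arguments because $(X_1,\ldots,X_{i-1},X_i')\stackrel{d}{=}(X_1,\ldots,X_i)$.
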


For each $i$, denote
\begin{align}\label{eq:Ai and Bi}
    A_i=\mathbb{E}[(D_iY_i)(D_iH)\mid X],\quad B_i=\mathbb{E}[|D_iY_i|(D_iH)\mid X].
\end{align}

Both $A_i=A_i(X)$ and $B_i=B_i(X)$ are functionals of $X$. 
Thus we can define resampling differences of $A_i$ and $B_i$ as
\begin{align*}
    D_jA_i=A_i(X)-A_i(X^{\{j\}}),\qquad D_jB_i=B_i(X)-B_i(X^{\{j\}}).
\end{align*}

\begin{lemma}\label{lem:proxy-differences}
    Let $I=[m]$.
    Suppose $a_i,c_i,b_{ij}$ dominate the corresponding norms as in \eqref{eq:ab-assumption}.  
    Then, for all $i,j\in I$,
    \begin{equation}\label{eq:proxy-diff}
        \Vert D_jA_i\Vert_{s}\leq2a_ib_{ij},
        \qquad
        \Vert D_jB_i\Vert_{s}\leq2a_ib_{ij}.
    \end{equation}

\end{lemma}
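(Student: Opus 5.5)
The plan is to bound $D_jA_i$ and $D_jB_i$ directly, splitting into the cases $j=i$ and $j\ne i$. In both cases I will use three tools: Hölder's inequality with the conjugacy $1/p+1/p^*=1/s$ from \eqref{eq:pstar-conjugacy}; the contraction of conditional expectation in $L^s$; and the contraction estimates \eqref{eq:DiYi-contract}--\eqref{eq:DjDiYi-contract} of Lemma~\ref{lem:doob-identities}. Here $A_i=\mathbb{E}[(D_iY_i)(D_iH)\mid X]$ is the average over the independent copy $X_i'$ only. Fix $i,j$.

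\emph{Case $j=i$.} Bound the two terms of $D_iA_i=A_i(X)-A_i(X^{\{i\}})$ separately. Since $X^{\{i\}}\stackrel{d}{=}X$, both terms have the same law. By conditional Jensen and Hölder,
\begin{equation*}
\Vert A_i\Vert_s\leq\Vert (D_iY_i)(D_iH)\Vert_s\leq \Vert D_iY_i\Vert_p\Vert D_iH\Vert_{p^*}\leq a_ic_i .
\end{equation*}
The last step uses \eqref{eq:DiYi-contract} with $u=p$. Hence $\Vert D_iA_i\Vert_s\leq 2a_ic_i=2a_ib_{ii}$. The same computation applies to $B_i$, since $|D_iY_i|$ has the same $L^p$ norm as $D_iY_i$.

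\emph{Case $j\ne i$.} Couple everything with the same $X_i'$ and an independent $X_j'$. Then $A_i(X^{\{j\}})$ is the conditional expectation, given $(X,X_j')$, of $(D_iY_i)^{(j)}(D_iH)^{(j)}$, where the superscript $(j)$ means evaluation at $X^{\{j\}}$. Thus $D_jA_i=\mathbb{E}[\,\Delta\mid X,X_j']$ with
\begin{equation*}
\Delta=(D_iY_i)(D_iH)-(D_iY_i)^{(j)}(D_iH)^{(j)}=(D_iY_i)\,D_jD_iH+(D_jD_iY_i)\,(D_iH)^{(j)} ,
\end{equation*}
using $D_j(D_iH)=D_iD_jH$ and $D_j(D_iY_i)=D_jD_iY_i$. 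By Hölder, the first term has $L^s$ norm at most $\Vert D_iY_i\Vert_p\Vert D_iD_jH\Vert_{p^*}\leq a_ib_{ij}$. For the second term, $(D_iH)^{(j)}\stackrel{d}{=}D_iH$, so its $L^p$ norm is at most $a_i$. Also, $\Vert D_jD_iY_i\Vert_{p^*}\leq b_{ij}$ by \eqref{eq:DjDiYi-contract} with $u=p^*$; for $j>i$ the term vanishes. Here one checks $p^*\leq p$, which is equivalent to $s\leq p-s$ and holds because $p\geq 2s$. So \eqref{eq:DjDiYi-contract} applies in the range $1\leq u\leq p$. Conditional expectation contracts $L^s$, which gives $\Vert D_jA_i\Vert_s\leq 2a_ib_{ij}$.

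For $B_i$, write
\begin{equation*}
|D_iY_i|D_iH-|D_iY_i^{(j)}|(D_iH)^{(j)}=|D_iY_i|\,D_jD_iH+\bigl(|D_iY_i|-|(D_iY_i)^{(j)}|\bigr)(D_iH)^{(j)} .
\end{equation*}
Then use $\bigl||a|-|b|\bigr|\leq|a-b|=|D_jD_iY_i|$ and repeat the same Hölder estimates.

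The only delicate point, which I expect to be the main obstacle, is justifying the representation of $A_i(X^{\{j\}})$ as a conditional expectation under the joint coupling. This requires that $Y_i=h_i(X_1,\dots,X_i)$ be a fixed measurable version, so that $(D_iY_i)^{(j)}$ is well defined and the conditional expectation over $X_i'$ commutes with the substitution $X_j\mapsto X_j'$. This follows from independence of $X_i'$ from $(X,X_j')$ via Fubini. The integrability needed for Fubini is supplied by $H\in L^p$.
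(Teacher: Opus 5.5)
Your proof is correct and follows the paper's argument. You use the same split into $j=i$ (triangle inequality, equality in law, H\"older and \eqref{eq:DiYi-contract}) and $j\ne i$ (write $D_jA_i$ as a conditional expectation given $(X,X_j')$, apply a product-difference identity, H\"older with $1/p+1/p^*=1/s$, and \eqref{eq:DjDiYi-contract}), and you handle $B_i$ via $\bigl||a|-|b|\bigr|\le|a-b|$. The differences are cosmetic: you use the mirror product rule $U\,D_jT+(D_jU)T^{(j)}$ instead of the paper's $(D_jU)T+U^{(j)}D_jT$, and you explicitly check $p^*\le p$ (needed to apply \eqref{eq:DjDiYi-contract} at $u=p^*$), which the paper uses tacitly.
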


\begin{lemma}[von Bahr--Esseen inequality \cite{vonBahrEsseen1965}]\label{lem:uniform-low-moment}
    Let $X=(X_i)_{i\in I}$ be a family of independent random elements.
    If $1<s\leq2$ and $G=G(X)\in L^s$, then
    \begin{equation}\label{eq:uniform-low-moment}
        \Vert G-\mathbb EG\Vert_s \leq C_s\left(\sum_{j\in I}\Vert D_jG\Vert_s^s\right)^{1/s},
    \end{equation}
    where the index set $I$ is finite or countably infinite.    
\end{lemma}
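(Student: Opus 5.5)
The statement is the von Bahr--Esseen-type inequality, Lemma~\ref{lem:uniform-low-moment}. I would prove it by a martingale decomposition followed by the classical von Bahr--Esseen (Burkholder) bound for martingale differences with exponent $1<s\leq2$.

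\textbf{Finite index set.} First take $I=[m]$ and put $\mathcal F_i=\sigma(X_1,\dots,X_i)$. Then
\begin{align*}
G-\mathbb EG=\sum_{i=1}^m Y_i,\qquad Y_i=\mathbb E[G\mid\mathcal F_i]-\mathbb E[G\mid\mathcal F_{i-1}].
\end{align*}
For a martingale difference sequence and $1<s\leq 2$, the von Bahr--Esseen inequality \cite{vonBahrEsseen1965} gives
\begin{align*}
\Bigl\Vert\sum_i Y_i\Bigr\Vert_s^s\leq 2\sum_i\Vert Y_i\Vert_s^s.
\end{align*}
Alternatively, use Burkholder's inequality together with $\bigl(\sum_i Y_i^2\bigr)^{s/2}\leq\sum_i|Y_i|^s$, which holds since $s/2\leq1$.

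\textbf{Bounding each increment.} It remains to show $\Vert Y_i\Vert_s\leq\Vert D_iG\Vert_s$. By independence of $X_i'$ from $X$,
\begin{align*}
\mathbb E[G\mid\mathcal F_{i-1}]=\mathbb E[G(X^{\{i\}})\mid\mathcal F_i],
\end{align*}
because integrating out $X_i$ is the same as replacing it by a fresh copy and then conditioning on $\mathcal F_i$. Hence $Y_i=\mathbb E[D_iG\mid\mathcal F_i]$, and conditional Jensen gives the bound. This is the same contraction as \eqref{eq:DiYi-contract} in Lemma~\ref{lem:doob-identities}. Combining the two steps yields \eqref{eq:uniform-low-moment} with $C_s=2^{1/s}$, and the constant does not depend on $m$.

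\textbf{Countable index set.} This is the step needing care. Enumerate $I=\{i_1,i_2,\dots\}$ and set $G_m=\mathbb E[G\mid\mathcal F_m]$. By martingale convergence in $L^s$ (since $G\in L^s$), $G_m\to\mathbb E[G\mid\mathcal F_\infty]=G$, because $G$ is a function of $X$. Next, $G_m$ is a function of $X_{i_1},\dots,X_{i_m}$, and $D_jG_m=\mathbb E[D_jG\mid\mathcal F_m]$ for $j\leq m$, again using independence of the copy. Jensen then gives $\Vert D_jG_m\Vert_s\leq\Vert D_jG\Vert_s$. Applying the finite case to $G_m$ gives
\begin{align*}
\Vert G_m-\mathbb EG\Vert_s\leq C_s\Bigl(\sum_{j\in I}\Vert D_jG\Vert_s^s\Bigr)^{1/s},
\end{align*}
and letting $m\to\infty$ finishes the proof; if the right-hand side is infinite there is nothing to prove. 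The main point to verify is the identity $D_jG_m=\mathbb E[D_jG\mid\mathcal F_m]$, which requires a jointly measurable version of $G$ so that Fubini applies.
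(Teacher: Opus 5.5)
Your proof is correct and follows the paper's argument: the Lévy martingale, the identity $Y_i=\mathbb E[D_iG\mid\mathcal F_i]$ with conditional Jensen, the von Bahr--Esseen bound with constant $2$, and $L^s$ martingale convergence for countable $I$. The only difference is in the countable step. The paper avoids your detour through $D_jG_m$, since the increments of $G_m-\mathbb EG=\sum_{j\le m}e_j$ are exactly the already-bounded $e_j=\mathbb E[D_jG\mid\mathcal F_j]$. If you keep your route, the identity must read $D_jG_m=\mathbb E[D_jG\mid\mathcal F_m\vee\sigma(X_j')]$ rather than conditioning on $\mathcal F_m$ alone, because the left side depends on $X_j'$; conditional Jensen still gives the bound you need.
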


The proofs of Lemmas~\ref{lem:doob-identities}--\ref{lem:uniform-low-moment} are given in Subsection~\ref{subsec:proof-exchangeable}.

Generally, let $(X,X')$ be an exchangeable pair on a measurable space $\mathcal X$, 
let $W=\phi(X)$, and $F:\mathcal X^2\to\mathbb R$ be antisymmetric, with $D=F(X,X')$.
Under these exchangeable-pair assumptions, \citet[Theorem~4.1]{Zhang2022} gives the following Berry--Esseen bound with zero remainder term.

\begin{lemma}\label{lem:gen-pair}
    Let $W'=\phi(X')$. Assume that there exists a square-integrable random variable $D^*\geq|D|$ such that
    \begin{align*}
        (W,W',D,D^*)\stackrel d=(W',W,-D,D^*).
    \end{align*}

    Denote $\Delta=W-W'$.

    Suppose $\mathbb{E}[W]=0$, $\mathbb{E}[W^2]=1$, 
    and that there is a sigma-field $\mathcal{G}$ such that $W\in\mathcal{G}$ and
    \begin{equation}\label{eq:pair-regression}
        \mathbb{E}[D\mid\mathcal G]=\lambda W,\qquad \lambda>0.
    \end{equation}

    Then
    \begin{align}\label{eq:gen-pair-bound}
        d_{\mathrm K}(W,Z)\leq \mathbb{E}\left|1-\frac1{2\lambda}\mathbb{E}[D\Delta\mid W]\right|+\frac{1}{\lambda}\mathbb{E}\left|\mathbb{E}[D^*\Delta\mid W]\right|.     
    \end{align}
\end{lemma}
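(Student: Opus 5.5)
The plan is the standard Stein argument for exchangeable pairs. The one non-standard point is that the non-smooth part of $f'$ is handled through the symmetric majorant $D^*$, so that no concentration inequality (and hence no remainder term) is needed.

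\textbf{Stein equation and the first term.} Fix $z\in\mathbb R$ and let $f=f_z$ be the bounded solution of the Stein equation
\begin{equation*}
f'(w)-wf(w)=\mathbf 1\{w\leq z\}-\Phi(z).
\end{equation*}
It satisfies $\|f\|_\infty\leq 1$ and $\|f'\|_\infty\leq1$. Moreover $w\mapsto wf(w)$ is nondecreasing, so $f'$ equals a nondecreasing function minus the indicator $\mathbf 1\{w> z\}$, up to a constant. Since $D=F(X,X')$ is antisymmetric and $(X,X')$ is exchangeable, $\mathbb E[D(f(W)+f(W'))]=0$. Combining this with \eqref{eq:pair-regression}, and using $W\in\mathcal G$,
\begin{equation*}
\lambda\,\mathbb E[Wf(W)]=\mathbb E[Df(W)]=\tfrac12\mathbb E[D(f(W)-f(W'))]=\tfrac12\mathbb E\Bigl[D\int_{-\Delta}^0 f'(W+t)\,dt\Bigr].
\end{equation*}
Therefore
\begin{equation*}
\mathbb P(W\leq z)-\Phi(z)=\mathbb E\Bigl[f'(W)\Bigl(1-\tfrac{1}{2\lambda}D\Delta\Bigr)\Bigr]+\tfrac1{2\lambda}\mathbb E\Bigl[D\int_{-\Delta}^0\bigl(f'(W)-f'(W+t)\bigr)dt\Bigr].
\end{equation*}
Conditioning on $W$ and using $|f'|\leq1$, the first term is bounded by the first term of \eqref{eq:gen-pair-bound}.

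\textbf{The remainder via monotonicity.} For the second term, write $f'=g_1-g_2+\text{const}$ with $g_1(w)=wf(w)$ and $g_2(w)=\mathbf 1\{w>z\}$, both nondecreasing. For a nondecreasing $g$, the quantity $\int_{-\Delta}^0(g(W)-g(W+t))\,dt$ has the sign of $-\Delta$ and modulus at most $|\Delta|\,|g(W)-g(W')|$. Hence
\begin{equation*}
\Bigl|D\int_{-\Delta}^0(g(W)-g(W+t))\,dt\Bigr|\leq D^*\Delta\,(g(W)-g(W')),
\end{equation*}
where the right-hand side is nonnegative since $\Delta=W-W'$ and $g$ is monotone. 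Next, $D^*$ is symmetric and $\Delta$ is antisymmetric under the swap, by the assumed joint distributional identity. Hence $\mathbb E[D^*\Delta(g(W)+g(W'))]=0$, so
\begin{equation*}
\mathbb E[D^*\Delta(g(W)-g(W'))]=2\,\mathbb E[D^*\Delta\, g(W)]=2\,\mathbb E\bigl[g(W)\,\mathbb E[D^*\Delta\mid W]\bigr].
\end{equation*}

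\textbf{Bounding $g_1$ and $g_2$, and the main obstacle.} For $g_2$, $|g_2|\leq1$ immediately gives the bound $2\mathbb E|\mathbb E[D^*\Delta\mid W]|$. For $g_1$, I would use $|g_1|$ bounded on the relevant range; if the bound is not uniform, I would instead apply the same symmetrization to $g_1-c$ for a suitable centring constant. Alternatively, I would rewrite $g_1$ via the Stein equation as $f'-\mathbf 1\{\cdot\le z\}+\Phi(z)$ and exploit $\|f'\|_\infty\leq1$, so that each piece is bounded by $1$.

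Collecting the factor $\tfrac1{2\lambda}$ against the factor $2$ gives the term $\tfrac1\lambda\mathbb E|\mathbb E[D^*\Delta\mid W]|$. The constant in front requires care: one must check that the two monotone pieces together contribute at most $\tfrac1\lambda$ rather than $\tfrac2\lambda$. I expect this to be the main obstacle, namely the careful decomposition of $f'$ into monotone parts with uniformly bounded sup norm (following \citet{Zhang2022}). Taking the supremum over $z$ then yields \eqref{eq:gen-pair-bound}.
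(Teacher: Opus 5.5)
Your argument is the standard Stein exchangeable-pair proof. The paper gives no proof and cites \citet[Theorem~4.1]{Zhang2022}, whose argument is of exactly this type, and everything up to the remainder term is fine. The genuine gap is the one you flag yourself: as written, the proposal proves the bound with $\frac{2}{\lambda}\mathbb{E}|\mathbb{E}[D^*\Delta\mid W]|$, not $\frac{1}{\lambda}$. Using $|g_2|\leq1$ and $|g_1|=|wf_z(w)|\leq1$, each monotone piece contributes $\frac{1}{2\lambda}\cdot 2\,\mathbb{E}|\mathbb{E}[D^*\Delta\mid W]|$.

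The missing idea is to centre each piece at the midpoint of its range rather than at $0$. The difference $g(W)-g(W')$ is unchanged when $g$ is replaced by $g-c$, so your symmetrization gives $\mathbb{E}[D^*\Delta(g(W)-g(W'))]=2\mathbb{E}[(g(W)-c)\,\mathbb{E}[D^*\Delta\mid W]]$ for every constant $c$. Equivalently, $\mathbb{E}[D^*\Delta]=0$ by the symmetry hypothesis. Now $g_2=\mathbf 1\{w>z\}$ satisfies $|g_2-\tfrac12|\leq\tfrac12$. Also $g_1(w)=wf_z(w)$ is nondecreasing with limits $\Phi(z)-1$ at $-\infty$ and $\Phi(z)$ at $+\infty$, so $|g_1-(\Phi(z)-\tfrac12)|\leq\tfrac12$. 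Each piece then contributes only $\frac{1}{2\lambda}\mathbb{E}|\mathbb{E}[D^*\Delta\mid W]|$, and the total is exactly $\frac{1}{\lambda}\mathbb{E}|\mathbb{E}[D^*\Delta\mid W]|$, as claimed. Your alternative of rewriting $g_1$ through the Stein equation only reintroduces $f'$ and an indicator, so it does not improve the constant.

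A minor point: your claim that $\int_{-\Delta}^0(g(W)-g(W+t))\,dt$ has the sign of $-\Delta$ is wrong. For nondecreasing $g$ this integral is always nonnegative; for $g(w)=w$ it equals $\Delta^2/2$. This is harmless, because your next inequality uses only the modulus bound $|\Delta|\,|g(W)-g(W')|=\Delta\,(g(W)-g(W'))$, which is correct.
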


\subsection{Berry--Esseen bound}
Now we apply Theorem~\ref{thm:abstract} when $H$ is a local-sum, which gives Berry--Esseen bound in Theorem~\ref{thm:volume}--\ref{thm:euler}.

Let $(X_v)_{v\in\mathbb{Z}^d}$ be independent random elements, with resampling differences defined in \eqref{eq:Di}--\eqref{eq:Dij}.
For $i\not=j\in\mathbb{Z}^d$ and a general functional $\xi$ of $X$, define 
\begin{align*}
    D_i\xi(X^{\{j\}})=\xi(X^{\{j\}})-\xi(X^{\{i,j\}})
\end{align*}

\begin{lemma}\label{lem:local-sum}
    Let $L\subset\mathbb{Z}^d$ be finite.  
    For each $w\in L$, let $\xi_w=\xi_w(X)$ be a measurable real-valued functional, and
    \begin{align}\label{partial-sum}
        H=\sum_{w\in L}\xi_w
    \end{align}
    be the local-sum.

    Fix $1<s\leq2$ and assume $H\in L^p$, where $p\geq2s$ and $p^*$ satisfy \eqref{eq:pstar-conjugacy}.  
    Suppose there are exponents $\theta_p,\theta_{p^*}>0$ and finite positive constants
    $C_p,C_{p^*}$ such that, for every $i\not=j\in\mathbb Z^d$ and $w\in L$,
    there exists a nonnegative random variable $Y_i^B(w)$ satisfying
    \begin{align}\label{eq:site-envelope}
        |D_i\xi_w(X^B)|&\leq Y_i^B(w),\notag\\
        \mathbb{E}\bigl(Y_i^B(w)\bigr)^p\leq C_p(1+|i-w|)^{-\theta_{p}}&\quad
        \mathbb{E}\bigl(Y_i^B(w)\bigr)^{p^*}\leq C_{p^*}(1+|i-w|)^{-\theta_{p^*}},
    \end{align}
    where $B=\emptyset$ or $\{j\}$.

    Further assume that
    \begin{align}\label{eq:mixed-summability-criterion}
        \frac{\theta_p}{p}>d,\qquad
        \frac{\theta_{p^*}}{p^*}>2d.
    \end{align}
    and denote
    \begin{align}\label{eq:site-kernel-sums}
        K_p=\sum_{r\in\mathbb{Z}^d}(1+|r|)^{-\theta_p/p},
        K_{p^*}=\sum_{r\in\mathbb{Z}^d}(1+|r|)^{-\theta_{p^*}/p^*},
        T_{p^*}=\sum_{r\in\mathbb{Z}^d}(1+|r|)^{-\theta_{p^*}/(2p^*)}.
    \end{align}

    Then there exist nonnegative arrays $(a_i)$, $(c_i)$, and $(b_{ij})$ satisfying \eqref{eq:ab-assumption} for $H$. 
    Additionally, for a finite constant
    $C=C(d,p,p^*,s,\theta_p,\theta_{p^*},C_p,C_{p^*})$,
    \begin{align}\label{eq:Q-local}
        \mathcal Q_s(a,b)\leq C|L|^{1/s}.
    \end{align}

    Consequently, whenever $\sigma^2:=\operatorname{Var} H>0$,
    \begin{align}\label{eq:local-general}
        d_{\mathrm K}\!\left(\frac{H-\mathbb{E} H}{\sigma},Z\right)
        \leq C\frac{|L|^{1/s}}{\sigma^2}.
    \end{align}

    In particular, if $\operatorname{Var} H\geq c_0|L|$, then
    \begin{align}\label{eq:local-rate}
        d_{\mathrm K}\!\left(\frac{H-\mathbb{E} H}{\sqrt{\operatorname{Var} H}},Z\right)
        \leq C(c_0,d,p,p^*,s,\theta_p,\theta_{p^*},C_p,C_{p^*})|L|^{-(1-1/s)}.
    \end{align}

\end{lemma}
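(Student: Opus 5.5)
The plan is to construct the arrays explicitly from the site envelopes and then apply Theorem~\ref{thm:abstract}; the rate statements \eqref{eq:local-general} and \eqref{eq:local-rate} follow at once from \eqref{eq:Q-local}.

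\textbf{Step 1: the arrays $a_i$ and $c_i$.} Since $H=\sum_{w\in L}\xi_w$, we have $D_iH=\sum_{w}D_i\xi_w$. Minkowski's inequality and \eqref{eq:site-envelope} with $B=\emptyset$ then give
\[
\Vert D_iH\Vert_p\leq \sum_{w\in L}C_p^{1/p}(1+|i-w|)^{-\theta_p/p}=:a_i .
\]
In the same way we set
\[
c_i:=\sum_{w}C_{p^*}^{1/p^*}(1+|i-w|)^{-\theta_{p^*}/p^*}.
\]
Both sums are bounded uniformly in $i$, by $C K_p$ and $C K_{p^*}$ respectively. Summing over $i$ gives
\[
\sum_i a_i\leq C|L|K_p,\qquad \sum_i c_i\leq C|L|K_{p^*}.
\]

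\textbf{Step 2: the array $b_{ij}$.} For $i\neq j$ we write $D_iD_j\xi_w=D_i\xi_w(X)-D_i\xi_w(X^{\{j\}})$. There are two bounds available.
\begin{itemize}
\item Bounding by the envelopes at $w$, using $B=\emptyset$ and $B=\{j\}$, gives $\Vert D_iD_j\xi_w\Vert_{p^*}\leq 2C_{p^*}^{1/p^*}(1+|i-w|)^{-\theta_{p^*}/p^*}$.
\item By symmetry in $(i,j)$, the same bound holds with $|j-w|$ in place of $|i-w|$.
\end{itemize}
We take the minimum of the two and use $\min(x,y)\leq\sqrt{xy}$:
\[
\Vert D_iD_jH\Vert_{p^*}\leq b_{ij}:=2C_{p^*}^{1/p^*}\sum_{w\in L}(1+|i-w|)^{-\theta_{p^*}/(2p^*)}(1+|j-w|)^{-\theta_{p^*}/(2p^*)} .
\]
We set $b_{ii}=c_i$. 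The kernel exponent is $\theta_{p^*}/(2p^*)>d$ by \eqref{eq:mixed-summability-criterion}, so $T_{p^*}<\infty$. Summing the kernel over $i$ or over $j$ then gives at most $T_{p^*}$ per site $w$.

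\textbf{Step 3: bounding $\mathcal Q_s(a,b)$.} Since $a_i\leq CK_p$ uniformly, we get
\[
\sum_i a_ib_{ij}\leq CK_p\Bigl(c_j+\sum_{i\ne j}b_{ij}\Bigr)\leq C\Bigl(c_j+T_{p^*}\sum_{w}(1+|j-w|)^{-\theta_{p^*}/(2p^*)}\Bigr)=:C\,g_j .
\]
Here $g_j$ is bounded uniformly in $j$, and $\sum_j g_j\leq C|L|$. Hence
\[
\sum_j g_j^s\leq (\sup_j g_j)^{s-1}\sum_j g_j\leq C|L|,
\]
which gives $\mathcal Q_s(a,b)\leq C|L|^{1/s}$, that is, \eqref{eq:Q-local}.

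We also need $H\in L^p$, which is assumed. Theorem~\ref{thm:abstract} then yields \eqref{eq:local-general}. If in addition $\sigma^2\geq c_0|L|$, the bound becomes $C|L|^{1/s-1}/c_0$, which is \eqref{eq:local-rate}.

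\textbf{Main obstacle.} The delicate point is Step 2. A naive bound $b_{ij}$ that uses only the distance to $i$ is not summable in $j$ for fixed $i$, so one genuinely needs the geometric-mean trick, and hence the stronger condition $\theta_{p^*}/p^*>2d$. One must also check that the envelope at $X^{\{j\}}$ is legitimately bounded with the same constants; this is exactly what the assumption with $B=\{j\}$ provides.
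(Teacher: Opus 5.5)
Your proposal is correct and follows the paper's proof essentially step for step. It uses the same $a_i$ and $c_i=b_{ii}$, the same bound on $D_iD_j\xi_w$ by the envelopes at $i$ and at $j$, and the same $\min\{x,y\}\le\sqrt{xy}$ trick, which is where $\theta_{p^*}/p^*>2d$ enters. As in the paper, you then show $\sum_i a_ib_{ij}$ is uniformly bounded with total sum $O(|L|)$ and conclude via Theorem~\ref{thm:abstract}. The only cosmetic difference is that you build the geometric mean directly into the definition of $b_{ij}$, whereas the paper defines $b_{ij}$ with the minimum and applies the geometric-mean bound only when summing over $i$.
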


The proof of Lemma~\ref{lem:local-sum} is given in Subsection~\ref{subsec:proof-spatial}.

\subsection{Moment estimates}
To apply Lemma~\ref{lem:local-sum} to the three target functionals $\mu_{\star}(\Lambda_n;f)$,
we write them in the form of \eqref{partial-sum} and find suitable controlling random variables.
Here we apply the controlling random variables constructed in \citet{McAuley2026a}, and derive higher-order moment estimates.

In the sequel denote $R(v,w)=1+\left|v-w\right|$, where $v,w\in\mathbb{Z}^d$. 

\begin{lemma}\label{lem:volume-envelope}
    Under Assumption~\ref{ass:McAuley}, for every $1\leq r<\infty$ and every $\eta>0$ there is $C_{r,\eta}<\infty$ such that, 
    for any $v,w\in\mathbb{Z}^d$,
    there exists $Y^{\operatorname{Vol}}_v(w)$ with
    \begin{align}\label{eq:vol-envelope}
        |D_v\mu_{\operatorname{Vol}}(B_w)|\leq Y^{\operatorname{Vol}}_v(w)\leq2
    \end{align}
    and
    \begin{align}\label{eq:vol-r-envelope}
        \mathbb{E}\bigl(Y^{\operatorname{Vol}}_v(w)\bigr)^r
        \leq C_{r,\eta} R(v,w)^{-\beta+\eta}.
    \end{align}

\end{lemma}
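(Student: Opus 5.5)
The plan is to build the envelope from the standard "resampling changes the unbounded component only locally" mechanism of \citet{McAuley2026a}. First, $\mu_{\operatorname{Vol}}(B_w)\in[0,1]$ for both $f$ and $f^{(v)}$, so $|D_v\mu_{\operatorname{Vol}}(B_w)|\leq 1\leq 2$. It therefore suffices to exhibit an event $E_v(w)$ off which $D_v\mu_{\operatorname{Vol}}(B_w)=0$, and to set $Y^{\operatorname{Vol}}_v(w)=2\cdot\mathbf 1_{E_v(w)}$. Since the envelope is bounded by $2$, we get $\mathbb{E}(Y^{\operatorname{Vol}}_v(w))^r\leq 2^r\mathbb{P}(E_v(w))$ for every $r$. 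The whole task then reduces to the single estimate
\[
\mathbb{P}(E_v(w))\leq C_\eta R(v,w)^{-\beta+\eta}.
\]

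\textbf{Choice of $E_v(w)$.} Let $\rho=|v-w|/2$ and let $p_v=q*(X_v'-X_v)$ be the perturbation from \eqref{eq:block-perturbation}. By (M3), on $B_w$ and more generally away from $B_v$, $p_v$ and its first derivatives are pointwise of order $|x-v|^{-\beta}$ times a Gaussian supremum. We take $E_v(w)$ to be the union of two events.
\begin{enumerate}[label=(\roman*)]
\item A \emph{perturbation event}: $\sup_{B_w}(|p_v|+|\nabla p_v|)$ is not small. We use $|\ell|$-type stability: on $B_w$, the level set $\{f=\ell\}$ is nondegenerate at scale $t=R^{-\beta+\eta/2}$, in the sense that $\min_{B_w}\max(|f-\ell|,|\nabla f|)\geq t^{1/2}$.
\item A \emph{connectivity event}: there is a bounded component of $\{f\geq\ell\}$ or $\{f^{(v)}\geq\ell\}$ (or of the intermediate fields $f+\tau p_v$) that intersects $B_w$ and has diameter at least $\rho$.
\end{enumerate}

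\textbf{Why $D_v=0$ off $E_v(w)$.} Off (i), the sets $\{f\geq\ell\}\cap B_w$ and $\{f^{(v)}\geq\ell\}\cap B_w$ differ by a set of negligible volume. This is where the nondegeneracy matters: a small $C^1$ perturbation with no near-critical points moves level sets by a small amount. The cleaner route is to make (i) strong enough that the two excursion sets inside $B_w$ are isotopic and in fact identical up to a null set. Failing that, we add an interpolation event and absorb the volume change into the envelope. Off (ii), a component of the excursion set in $B_w$ is in the unbounded component if and only if it reaches distance $\rho$. The perturbation near $B_v$ cannot change this, because far from $B_v$ it is small (by (i) applied on a ball, handled by a union bound over unit cubes at distance $\geq\rho$ from $v$).

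\textbf{Probability bounds.}
\begin{itemize}
\item For (ii), apply (M5) with $a=\beta$, together with stationarity and the uniform-in-interpolation version of that event used in \citet{McAuley2026a}. This gives $o(R^{-\beta})$.
\item For (i), Gaussian tails of $\sup_{B_w}|p_v|$ at threshold $t\gg R^{-\beta}$ are superpolynomially small.
\item The near-degeneracy probability $\mathbb{P}(\min\max(|f-\ell|,|\nabla f|)<t^{1/2})$ is controlled by a Kac–Rice/Bulinskaya-type bound using the density bounds from (M1), (M2). It is polynomial in $t$, and the exponents are tuned so that the final bound is $R^{-\beta+\eta}$.
\end{itemize}

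\textbf{Main obstacle.} I expect the hardest part to be the far-field step in (ii): controlling the change of connectivity globally, uniformly over all the unit cubes at distance $\rho$ to $\infty$ from $v$. The decay $|x-v|^{-\beta}$ summed over these cubes must still yield $R^{-\beta+\eta}$. My first attempt is to follow McAuley's "sprinkling" argument. The idea is to compare both fields with $f\pm\varepsilon$ for a suitable $\varepsilon$, using (M4) positivity to get monotonicity, and then to use the fact that the unbounded components of $\{f\geq\ell\pm\varepsilon\}$ agree locally with high probability.
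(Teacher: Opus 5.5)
\textbf{Gap: the indicator envelope cannot work for volume.} Your first reduction fails. For the volume functional there is no event $E_v(w)$ with $\mathbb P(E_v(w))\le C R(v,w)^{-\beta+\eta}$ off which $D_v\mu_{\operatorname{Vol}}(B_w)=0$. So no envelope of the form $2\cdot\mathbf 1_{E_v(w)}$ can satisfy \eqref{eq:vol-r-envelope}.

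The reason is that volume is not a topological invariant. Suppose $(f,p_v)$ is perfectly stable around $B_w$ and no component changes its bounded/unbounded label. The sets $\{f\ge\ell\}_\infty\cap B_w$ and $\{f^{(v)}\ge\ell\}_\infty\cap B_w$ still differ on the part of $\{f\ge\ell\}\triangle\{f+p_v\ge\ell\}$ adjacent to the unbounded component. That region has positive volume as soon as $\partial\{f\ge\ell\}_\infty$ meets $\operatorname{int}B_w$ and $p_v$ does not vanish there. By stationarity this happens with probability bounded below uniformly in $R(v,w)$, so the difference is typically tiny but nonzero. Your ``cleaner route'' (excursion sets identical up to a null set) is therefore impossible, and ``absorb the volume change into the envelope'' is exactly the step you leave undone.

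The paper, following the proof of Theorem~1.4 in McAuley's work, uses the two-part envelope
\[
Y^{\operatorname{Vol}}_v(w)=\operatorname{Vol}\bigl(B_w\cap\{|f-\ell|\le|p_v|\}\bigr)+\mathbf 1_{\{w\in\mathcal U_{\mathrm{FR}}(f,p_v)\}}\le 2 .
\]
Off the finite-range unstable set, the labels of components meeting $B_w$ are preserved. The change is then confined to $\{|f-\ell|\le|p_v|\}$. Its expected volume is $O(R^{-\beta+\eta})$, because $f(x)$ has bounded density and $\operatorname{Var}p_v(x)\le CR^{-2\beta}$ on $B_w$ by (M3). Your moment reduction then survives in the form $\mathbb E Y^r\le 2^{r-1}\mathbb E Y$.

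\textbf{The indicator part also misses the exponent $-\beta+\eta$ as you set it up.}
\begin{itemize}
\item Your bad event $\min_{B_w}\max(|f-\ell|,|\nabla f|)<t^{1/2}$ has probability of order $t^{1/2}\approx R^{-\beta/2}$, since critical values near $\ell$ have bounded density. The threshold must be comparable to the size of $p_v$ itself. The condition must also be stratified, so that tangential critical points on the faces of $B_w$ count. This is what $\mathcal U_{\mathrm{Loc}}(f,p_v)$ encodes.
\item More seriously, your far-field step is a union bound of local stability over all cubes within distance $\rho$ of $B_w$. This costs a factor $\rho^d$ and yields only $R^{d-\beta}$.
\end{itemize}
What is needed is the finite-range unstable set. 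Instability only matters on cubes $B_u$ joined to $B_w$ through a bounded component of $\{f\ge\ell\}$ or $\{f^{(v)}\ge\ell\}$. One then weights $\mathbb P(u\in\mathcal U_{\mathrm{Loc}}(f,p_v))^{1-\epsilon}$ by $\mathbb P(B_u\leftrightarrow B_w\text{ through a bounded component})^{\epsilon}$. The superpolynomial tail in (M5) applies to both $f$ and $f^{(v)}\overset{d}{=}f$, so the sum over $u$ converges to $C_\eta R(v,w)^{-\beta+\eta}$. This is the content of McAuley's Lemma~3.5, which enters the paper's proof through the first-moment bound $\mathbb E Y^{\operatorname{Vol}}_v(w)\le C_\eta R(v,w)^{-\beta+\eta}$.
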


\begin{lemma}\label{lem:surface-envelope}
    Under Assumption~\ref{ass:McAuley}, let $k'=k(k+1)/2-2$.  
    For every $1\leq r<k'$ and every $\eta>0$, there is $C_{r,\eta}<\infty$ such that, 
    for any $v,w\in\mathbb{Z}^d$,
    there exists $Y^{\operatorname{SA}}_v(w)$ with
    \begin{align}\label{eq:SA-envelope}
        |D_v\mu_{\operatorname{SA}}(B_w)|\leq Y^{\operatorname{SA}}_v(w)
    \end{align}
    and
    \begin{align}\label{eq:SA-r-envelope}
        \mathbb{E}\bigl(Y^{\operatorname{SA}}_v(w)\bigr)^r
        \leq C_{r,\eta}R(v,w)^{-\beta\frac{k'-r}{3k'-1}+\eta}.
    \end{align}

\end{lemma}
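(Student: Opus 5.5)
We need to prove Lemma~\ref{lem:surface-envelope}: an envelope $Y^{\operatorname{SA}}_v(w)$ for $|D_v\mu_{\operatorname{SA}}(B_w)|$, with $r$-th moment decaying like $R(v,w)^{-\beta\frac{k'-r}{3k'-1}+\eta}$.

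\textbf{Construction of the envelope.} We reuse the controlling variable of \citet{McAuley2026a} and split according to whether the resampling of block $B_v$ changes the geometry near $B_w$. Set $R=R(v,w)$ and write the perturbation as $p_v=q*(X_v'-X_v)$ as in \eqref{eq:block-perturbation}. Introduce two events. The first is a \emph{local event} $E_1$: $\sup_{B_w}|p_v|$ together with its first two derivatives is at least $R^{-\gamma}$, for an exponent $\gamma$ to be chosen. The second is a \emph{topological event} $E_2$: some component of $\{f\geq\ell\}$ or $\{f^{(v)}\geq\ell\}$ meeting $B_w$ is bounded with diameter of order $R$, or the unbounded component near $B_w$ is not connected to infinity in a ball of radius $R/2$ avoiding $B_v$. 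Off $E_1\cup E_2$, the perturbation is small and smooth on $B_w$, and the unbounded cluster restricted to $B_w$ is unchanged. Using the stability of nondegenerate level sets, $\mathcal H^{d-1}(B_w\cap\partial\{f\geq\ell\}_\infty)$ then changes by at most $C\,R^{-\gamma}\,N_w$. Here $N_w$ is a polynomial in $\|f\|_{C^2(B_w)}$ and in the inverse of the minimum of $|\nabla f|$ over the level set; this is controlled via Kac--Rice and Bulinskaya-type counts. On $E_1\cup E_2$ we bound crudely by $S_w:=\mathcal H^{d-1}(B_w\cap\{f=\ell\})+\mathcal H^{d-1}(B_w\cap\{f^{(v)}=\ell\})$, plus face contributions. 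We therefore take
\[
Y^{\operatorname{SA}}_v(w)=S_w\,\mathbf 1_{E_1\cup E_2}+C R^{-\gamma}N_w ,
\]
which dominates $|D_v\mu_{\operatorname{SA}}(B_w)|$ and gives \eqref{eq:SA-envelope}.

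\textbf{Moment bounds.}
\begin{enumerate}[label=(\roman*)]
\item By Kac--Rice factorial moment formulas for the level-set area in a unit box, which need $K\in C^{2k+\epsilon_0}$ from (M1), $\mathbb E S_w^{m}<\infty$ for every $m<k'$. The count $k'=k(k+1)/2-2$ is the number of independent derivative entries available for the nondegeneracy of the Gaussian vector in the Kac--Rice formula. The same holds for $N_w$ up to order $k'$.
\item Since $p_v$ is Gaussian with derivative variances of order $R^{-2\beta}$ by (M3), Gaussian tails give $\mathbb P(E_1)\leq C_m R^{-m(\beta-\gamma)}$ for every $m$. Assumption (M5), combined with the standard sprinkling/quasi-independence argument of \citet{McAuley2026a} that absorbs the small perturbation, gives $\mathbb P(E_2)\leq C_a R^{-a}+C R^{-(\beta-\gamma)\cdot c}$.
\item By H\"older with exponents $k'/r$ and $k'/(k'-r)$ (minus $\eta$),
\[
\mathbb E\bigl[S_w^r\mathbf 1_{E_1\cup E_2}\bigr]\leq \|S_w\|_{k'-}^{r}\,\mathbb P(E_1\cup E_2)^{\frac{k'-r}{k'}-\eta'} ,
\]
and $\mathbb E(R^{-\gamma}N_w)^r\leq C R^{-r\gamma}$.
\end{enumerate}

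\textbf{Choice of $\gamma$.} Choose $\gamma$ to balance $R^{-r\gamma}$ against the H\"older term $R^{-(\beta-\gamma)\,\theta\,\frac{k'-r}{k'}}$, where $\theta$ is the effective exponent from $E_2$. In McAuley's quasi-independence argument the perturbation bound enters through a cubic loss, which is where the constant $3$ comes from. Optimising then gives the exponent $\beta\frac{k'-r}{3k'-1}$, with losses absorbed into $\eta$.

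\textbf{Main obstacle.} The hardest part is bounding $\mathbb P(E_2)$ polynomially in $\beta$, i.e.\ showing the unbounded cluster inside $B_w$ is insensitive to a far perturbation. I would import this directly from the corresponding volume estimate behind Lemma~\ref{lem:volume-envelope}, which already yields decay $R^{-\beta+\eta}$. Lemma~\ref{lem:volume-envelope} is bounded by $2$, so its $r$-th moment essentially equals the probability of a change. Using that probability in step (iii) in place of $\mathbb P(E_1\cup E_2)$ simplifies the bookkeeping; the remaining work is matching the exponent $3k'-1$ exactly through the $\gamma$ optimisation.
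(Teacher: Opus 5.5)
\textbf{Verdict: there is a genuine gap.} It sits at the step that produces the whole exponent: controlling the \emph{geometric} (non-topological) change of surface area.

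\textbf{The event $E_1\cup E_2$ is incomplete.} You claim that off $E_1\cup E_2$ the unbounded cluster in $B_w$ is unchanged. This is false. An arbitrarily small perturbation changes the topology whenever $f$ has a stratified near-critical point at level $\ell$ in $B_w$, or in a cube joined to $B_w$ by a bounded component. That is the event $\{w\in\mathcal U_{\mathrm{FR}}(f,p_v)\}$, whose probability is only polynomially small in $R$, roughly linear in the size of $p_v$. It lies neither in your superpolynomially unlikely $E_1$ nor in your macroscopic-connectivity event $E_2$. This part is repairable by adding it to the bad event.

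\textbf{The bound $CR^{-\gamma}N_w$ fails.} The serious problem is this bound together with the claim that $N_w$ has moments up to order $k'$. Any perturbative bound on the change of $\mathcal H^{d-1}$ must blow up with the inverse of $m=\min|\nabla_S f|$ over the level set in the strata of $B_w$, and curvature is again divided by $m$. But $m$ is small with only polynomially small probability. A critical value within $\epsilon^2$ of $\ell$ occurs with probability of order $\epsilon^2$ and forces $|\nabla f|\lesssim\epsilon$ on the nearby level set. So $N_w$ has finite moments only of low order, and $\mathbb E(R^{-\gamma}N_w)^r\le CR^{-r\gamma}$ is not available. The optimisation of $\gamma$ therefore has nothing to stand on, and you explicitly leave the exponent $3k'-1$ unmatched.

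\textbf{The fallback to the volume envelope does not work.} $Y^{\operatorname{Vol}}_v(w)$ is not the probability of a change, since it contains $\operatorname{Vol}(B_w\cap\{|f-\ell|\le|p_v|\})$. Moreover, for surface area $D_v\mu_{\operatorname{SA}}(B_w)\neq0$ almost surely, so there is no small-probability event to H\"older against.

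\textbf{The missing idea is truncation at a nondegeneracy threshold $\epsilon$.}
\begin{enumerate}
\item On the near-degenerate or unstable event, bound the difference by $\mathcal L(w)=\mathcal H^{d-1}(B_w\cap\{f=\ell\})+\mathcal H^{d-1}(B_w\cap\{f^{(v)}=\ell\})$. This has bounded $k'$-th moment, so H\"older gives a factor $\mathbb P(\cdot)^{(k'-1)/k'}$.
\item On the complement, use a perturbative bound that deteriorates polynomially in $\epsilon$.
\item Optimise $\epsilon$. For instance, balancing $\epsilon^{(k'-1)/k'}$ against $R^{-\beta}\epsilon^{-2}$ gives exactly $R^{-\beta(k'-1)/(3k'-1)}$. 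This is the ``cubic loss'' you guessed.
\end{enumerate}
This is the content of Lemma~4.8 of \citet{McAuley2026a}, and the paper imports it directly. It uses that lemma's $\widehat Y_v(w)$, the bound $M_2(w)\le CR^{-2\beta}$ from (M3), and Lemma~3.5 there for $\mathbb P(w\in\mathcal U_{\mathrm{FR}}(f,p_v))$. Then $Y^{\operatorname{SA}}_v(w)=\min\{\widehat Y_v(w),\mathcal L(w)\}$ satisfies $\mathbb EY^{\operatorname{SA}}_v(w)\le C_\varepsilon R^{-\beta(k'-1)/(3k'-1)+\varepsilon}$ and $\mathbb E(Y^{\operatorname{SA}}_v(w))^{k'}\le C$.

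The $r$-th moments then follow from log-convexity of moments: $\mathbb EY^r\le(\mathbb EY)^{\theta}(\mathbb EY^{k'})^{1-\theta}$ with $\theta=(k'-r)/(k'-1)$. This yields precisely the exponent $\beta(k'-r)/(3k'-1)$. Note its shape: linear in $r$ and vanishing at $r=k'$. That is the signature of interpolating a first-moment decay against a merely bounded $k'$-th moment. A term like $R^{-r\gamma}$, which improves with $r$, could never produce it.
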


Under Assumption~\ref{ass:McAuley}, for finite unions $P,Q$ of closed cubical faces,
\citet[Theorem~6.1.1]{AdlerTaylor2007} gives, almost surely,
\begin{equation}\label{eq:EC-valuation}
    \mu_{\operatorname{EC}}(P\cup Q;f)=\mu_{\operatorname{EC}}(P;f)+\mu_{\operatorname{EC}}(Q;f)-\mu_{\operatorname{EC}}(P\cap Q;f).
\end{equation}

For an integer $n\geq1$, set
\begin{equation}\label{eq:EC-cube-geometry}
    I_n=\{-n,\ldots,n-1\}^d,\qquad \Gamma_{n,w}=\bigcup_{\substack{j\in[d]\colon w_j<n-1}}\bigl(B_w\cap\{x_j=w_j+1\}\bigr),\quad w\in I_n,
\end{equation}
where $\Gamma_{n,w}$ is the union of the upper sides of $B_w$ shared with other cubes in $\Lambda_n$. 
Then define
\begin{equation}\label{eq:EC-cube-score}
    \xi_w(f)=\mu_{\operatorname{EC}}(B_w;f)-\mu_{\operatorname{EC}}(\Gamma_{n,w};f).
\end{equation}

\begin{lemma}\label{lem:EC-decomp}
    Under Assumption~\ref{ass:McAuley}, almost surely, for every integer $n\geq1$,
    \begin{equation}\label{eq:EC-exact-sum}
    \mu_{\operatorname{EC}}(\Lambda_n;f)=\sum_{w\in I_n}\xi_w(f).
    \end{equation}

    Further, for distinct $u,v\in\mathbb Z^d$,
    \begin{align}
        D_v\mu_{\operatorname{EC}}(\Lambda_n;f)&=\sum_{w\in I_n}D_v\xi_w(f),\label{eq:EC-first-difference-sum}\\
        D_uD_v\mu_{\operatorname{EC}}(\Lambda_n;f)&=\sum_{w\in I_n}D_uD_v\xi_w(f).\label{eq:EC-second-difference-sum}
    \end{align}
\end{lemma}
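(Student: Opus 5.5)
We prove \eqref{eq:EC-exact-sum} by induction on the number of unit cubes in the decomposition, using the additivity \eqref{eq:EC-valuation}. The first step is to fix an enumeration of $I_n$. We take the reverse lexicographic order, so that $w$ precedes $w'$ whenever $w'\geq w$ coordinatewise and $w'\neq w$. Then we add the cubes one at a time, starting from the largest.

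Write $U_k$ for the union of the first $k$ cubes added. Under this order, when $B_w$ is added, its intersection with $U_k$ is a union of closed faces of $B_w$. We claim this union is exactly $\Gamma_{n,w}$. Indeed, any cube $B_{w'}$ already present meets $B_w$ in a face of some dimension. That intersection lies in the upper face $\{x_j=w_j+1\}$ of $B_w$ for some coordinate $j$ with $w'_j=w_j+1\leq n-1$. Conversely, each such upper face is shared with the cube $B_{w+e_j}$, which has already been added.

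This identification is the one genuinely combinatorial point. It needs the order to be a linear extension of the coordinatewise partial order, and it must also handle lower-dimensional contacts. For example, a cube $w'$ with $w'_j=w_j+1$ but $w'_i=w_i-1$ meets $B_w$ only in a piece contained in $\{x_j=w_j+1\}$. So the intersection is never larger than the union of those upper faces, and these faces are already fully covered by the cubes $w+e_j$.

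With this in hand, \eqref{eq:EC-valuation} gives
\[
\mu_{\operatorname{EC}}(U_k\cup B_w)=\mu_{\operatorname{EC}}(U_k)+\mu_{\operatorname{EC}}(B_w)-\mu_{\operatorname{EC}}(\Gamma_{n,w}),
\]
with $P=U_k$ and $Q=B_w$, both finite unions of closed cubical faces. Summing over $k$ telescopes to \eqref{eq:EC-exact-sum}, since $U_{|I_n|}=\Lambda_n$.

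There is a subtlety about the exceptional null set. Equation \eqref{eq:EC-valuation} holds almost surely for each fixed pair $(P,Q)$. The pairs used, over all $n$, form a countable family, so a single full-measure event suffices for all $n$ simultaneously.

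For \eqref{eq:EC-first-difference-sum} and \eqref{eq:EC-second-difference-sum}, we apply the same identity to the resampled fields $f^{(v)}$, $f^{(u)}$ and $f^{(\{u,v\})}$. Each of these is again a Gaussian field of the form $q*W'$ with $W'$ a white noise, so it satisfies Assumption~\ref{ass:McAuley} in the same way. Hence \eqref{eq:EC-exact-sum} holds almost surely for each of them, on a countable intersection of full-measure events over all $u,v$. Here the genericity conditions underlying \cite[Theorem~6.1.1]{AdlerTaylor2007} (the field being Morse and level-$\ell$ regular on every face) hold for the resampled field, because its law equals that of $f$.

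Finally, since $D_v$ and $D_uD_v$ are finite linear combinations of the functional evaluated at these configurations, linearity turns the identity into the two difference identities. I expect the main obstacle to be stating the face-intersection claim cleanly, together with checking that $\mu_{\operatorname{EC}}$ restricted to the unbounded component still satisfies the valuation property. The unbounded component of $\{f\geq\ell\}$ is a global object, the same for all observation sets, so \eqref{eq:EC-valuation} applies verbatim; that is why it is stated for $\mu_{\operatorname{EC}}$ directly.
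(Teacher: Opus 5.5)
Your proposal is correct and is essentially the paper's proof. The paper enumerates $I_n$ in increasing lexicographic order and iterates \eqref{eq:EC-valuation} using $B_{w_t}\cap\bigcup_{s>t}B_{w_s}=\Gamma_{n,w_t}$. This is exactly your identification of the intersection with the already-added cubes, since lexicographic order is one particular linear extension of the coordinatewise order. The paper then obtains \eqref{eq:EC-first-difference-sum}--\eqref{eq:EC-second-difference-sum} by applying the identity to the resampled fields, which have the same law as $f$, as you do. Your remark that only countably many pairs $(P,Q)$ and resampled configurations are involved, so that a single full-measure event suffices, is a point of care the paper leaves implicit.
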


\begin{lemma}\label{lem:EC-envelope}
    Under Assumption~\ref{ass:McAuley}, functionals $\xi_w$ defined in \eqref{eq:EC-cube-score} satisfy
    \begin{equation}\label{eq:EC-score-moment}
        \sup_{n\geq1}\sup_{w\in I_n}\mathbb E|\xi_w(f)|^{k-1}<\infty.
    \end{equation}

    For every $1\leq r<k-1$ and every $\eta>0$, 
    there is $C_{r,\eta}<\infty$ such that, for any $v,w\in\mathbb Z^d$,
    there exists a nonnegative random variable $Y_v^{\operatorname{EC}}(w)$, such that

    \begin{equation}\label{eq:EC-local-envelope}
        |D_v\xi_w(f)|\leq Y_v^{\operatorname{EC}}(w),\qquad n\geq1,\quad w\in I_n,
    \end{equation}
    and
    \begin{equation}\label{eq:EC-r-envelope}
        \mathbb E\bigl(Y_v^{\operatorname{EC}}(w)\bigr)^r\leq C_{r,\eta}R(v,w)^{-\beta\frac{k-1-r}{k-1}+\eta}.
    \end{equation}
\end{lemma}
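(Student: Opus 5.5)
We need to prove Lemma~\ref{lem:EC-envelope}: the uniform $(k-1)$-st moment bound \eqref{eq:EC-score-moment} for $\xi_w$, and the envelope $Y_v^{\operatorname{EC}}(w)$ with decay \eqref{eq:EC-r-envelope}.

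\emph{Step 1 (moment bound).} By \eqref{eq:EC-cube-score}, $\xi_w$ is the Euler characteristic of the unbounded component restricted to one unit cube, minus the same restricted to a union of at most $d$ faces. By stationarity and invariance of $q$ under coordinate permutations, there are finitely many cases up to isometry: $B_0$ and $\Gamma=\bigcup_{j\in S}(B_0\cap\{x_j=1\})$ for $S\subset[d]$. So it suffices to bound $\mathbb E|\operatorname{EC}(F\cap\{f\ge\ell\}_\infty)|^{k-1}$ for each closed face $F$ of $B_0$. For this I would use Morse theory on the stratified set $F$ with a generic linear height function. The Euler characteristic of $F\cap\{f\ge\ell\}_\infty$ is a signed count of critical points of that height function restricted to each stratum of $F\cap\partial\{f\ge\ell\}_\infty$. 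Each such count is dominated by the total number $N$ of critical points of $f$ restricted to strata of $F$ or to $\{f=\ell\}\cap F$, and this does not depend on the component structure. The count $N$ has finite $(k-1)$-st moment by Kac--Rice factorial-moment bounds: (M1) gives $K\in C^{2k+\epsilon_0}$, which yields finite $k$-th factorial moments of critical-point counts in a compact set. This is the same input McAuley uses. It gives \eqref{eq:EC-score-moment} with a constant independent of $n$ and $w$.

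\emph{Step 2 (envelope).} $D_v\xi_w$ can be nonzero only in two situations.
\begin{itemize}
\item[(a)] The perturbation $p_v$ from \eqref{eq:block-perturbation} changes the topology of $\{f\ge\ell\}$ near $B_w$.
\item[(b)] It changes which local pieces in $B_w$ belong to the unbounded component.
\end{itemize}
Accordingly I would set
\[
Y_v^{\operatorname{EC}}(w)=\bigl(N_w(f)+N_w(f^{(v)})\bigr)\bigl(\mathbf 1_{E_1}+\mathbf 1_{E_2}\bigr),
\]
where $N_w$ is the critical-point count from Step 1 for all faces of $B_w$. Here $E_1$ is the event that $\|p_v\|_{C^2(B_w)}$ exceeds a threshold $\delta$ below the stability margin of $f$ on $B_w$, and $E_2$ is the event that some bounded component of $\{f\ge\ell\}$ or of $\{f^{(v)}\ge\ell\}$ meets $B_w$ and has diameter at least $|v-w|/2$. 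The envelope inequality holds because, off $E_1\cup E_2$, the restricted excursion sets are isotopic and the connectivity to infinity is unchanged. That connectivity claim is the delicate part of the argument; I would adapt McAuley's stability argument with quantitative thresholds.

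\emph{Step 3 (decay).} By H\"older with exponents $\tfrac{k-1}{r}$ and $\tfrac{k-1}{k-1-r}$,
\[
\mathbb E Y^r\le \|N\|_{k-1}^{r}\,\mathbb P(E_1\cup E_2)^{(k-1-r)/(k-1)}.
\]

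\emph{Step 4 (probability estimate).} We need $\mathbb P(E_1\cup E_2)\le C_\eta R(v,w)^{-\beta+\eta}$.
\begin{itemize}
\item For $E_1$: by (M3), $p_v$ and its derivatives up to order 2 on $B_w$ are Gaussian with standard deviation $O(R^{-\beta})$. The probability that $f$ is $\delta$-degenerate on $B_w$, i.e.\ has its $C^2$ stability margin below $\delta$, is $O(\delta^{1-\eta'})$ by Kac--Rice/Bulinskaya-type estimates. Choosing $\delta=R^{-\beta+\eta}$ gives the bound.
\item For $E_2$: (M5) gives superpolynomial decay, applied to both $f$ and $f^{(v)}$, which have the same law.
\end{itemize}
Inserting this into Step 3 yields the exponent $-\beta\frac{k-1-r}{k-1}+\eta$, matching \eqref{eq:EC-r-envelope}. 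Since the $\eta$ losses are arbitrary, they are absorbed.

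I expect the main obstacle to be the quantitative degeneracy estimate in Step 4 for $E_1$. It must control the probability of near-tangency of $\{f=\ell\}$ to every stratum of every face of $B_w$, uniformly, with an exponent arbitrarily close to 1. I would try to obtain this from McAuley's existing stability lemmas rather than prove it afresh.
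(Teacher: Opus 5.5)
Steps 1 and 3 match the paper in substance: a critical-point envelope with a uniform $(k-1)$-st moment, then H\"older against the probability of a bad event. The gap is in Step 2: your envelope is not a valid upper bound for $|D_v\xi_w|$.

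Your event $E_1$ tests local stability only on $B_w$. Your event $E_2$ only excludes bounded components of diameter at least $|v-w|/2$ meeting $B_w$. But the perturbation $p_v=q*(X_v'-X_v)$ is not localised near $v$: it has standard deviation of order $R(v,w)^{-\beta}$ on every cube near $w$.

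Here is a configuration your events miss. Take a saddle of $f$ in a neighbouring cube $B_u$, $|u-w|=O(1)$, at height in $(\ell-|p_v|,\ell)$. The perturbation can lift it above $\ell$, merging a small bounded component $K$ that meets $B_w$ into $\{f^{(v)}\ge\ell\}_\infty$. Then $\xi_w$ changes, for instance by $\pm1$ if $K\cap B_w$ is a small blob away from $\Gamma_{n,w}$. Yet neither $E_1$ nor $E_2$ occurs, so your $Y_v^{\operatorname{EC}}(w)=0$. This event has probability of the same order $R(v,w)^{-\beta}$ as the bound you are proving, so it cannot be discarded.

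Your claim that the connectivity to infinity is unchanged off $E_1\cup E_2$ would hold for a compactly supported perturbation, but not here. Patching it by requiring local stability on every cube within distance $|v-w|/2$ of $w$ costs a factor $R(v,w)^d$ in the union bound, which destroys the exponent in \eqref{eq:EC-r-envelope}.

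The missing idea is McAuley's finite-range unstable set $\mathcal U_{\mathrm{FR}}(f,p_v)$ from \eqref{eq:unstable-sets}. Here $w$ is flagged if $B_w$ is locally unstable, or is joined through a bounded component of $\{f\ge\ell\}$ or of $\{f^{(v)}\ge\ell\}$ to some locally unstable cube. The paper uses two facts about it.
\begin{enumerate}
\item By \citet[Lemma~3.3]{McAuley2026a}, off $\{w\in\mathcal U_{\mathrm{FR}}(f,p_v)\}$ there is a stratified homeomorphism carrying $\{f\ge\ell\}_\infty\cap B_w$ onto $\{f^{(v)}\ge\ell\}_\infty\cap B_w$ and fixing $\Gamma_{n,w}$, so $D_v\xi_w=0$.
\item By \citet[Lemma~3.5]{McAuley2026a}, $\mathbb P(w\in\mathcal U_{\mathrm{FR}}(f,p_v))\le C_\delta R(v,w)^{-\beta+\delta}$. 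This is proved by paying only a factor $R^{d\epsilon}$ for local instability inside a window of radius $R^{\epsilon}$ around $w$. Beyond the window it uses the superpolynomial tail (M5) for bounded components of diameter at least $R^{\epsilon}$, which applies to $f^{(v)}$ too since $f^{(v)}\overset{d}{=}f$.
\end{enumerate}
Replace your $E_1\cup E_2$ by this event, i.e.\ take $Y_v^{\operatorname{EC}}(w)=C_d\bigl(N_w(f)+N_w(f^{(v)})\bigr)\mathbf 1_{\{w\in\mathcal U_{\mathrm{FR}}(f,p_v)\}}$. Your Steps 1 and 3 then give \eqref{eq:EC-r-envelope} exactly as in the paper.
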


The proofs of Lemmas~\ref{lem:volume-envelope}--\ref{lem:EC-envelope} are given in Subsection~\ref{subsec:proof-geometric}.

\section{Proofs of the main theorems}\label{sec:geometric-normal}
\subsection{Proof of Theorem~\ref{thm:abstract}}\label{subsec:proof-abstract}
\begin{proof}
    We first assume that $I=[m]$ and replace $H$ by $H-\mathbb{E} H$.  
    Construct $(W,W',D)$ as in \eqref{eq:pair-construction}.  
    Since $p\geq2s>2$, $W,W'$ and $D$ are square-integrable. 
    
    \eqref{eq:doob-joint-symmetry} and \eqref{eq:regression-doob} in Lemma~\ref{lem:doob-identities} verify the joint symmetry and Stein pair condition in Lemma~\ref{lem:gen-pair}, 
    with $\mathcal G=\sigma(X)$, $\lambda=1/m$, and $D^*=|D|$.
    Denote $\Delta=W-W'$, as defined in Lemma~\ref{lem:gen-pair}.
    Recall from \eqref{eq:Ai and Bi} that
    \begin{align*}
        A_i=\mathbb{E}[(D_iY_i)(D_iH)\mid X],\quad B_i=\mathbb{E}[|D_iY_i|(D_iH)\mid X],
    \end{align*}

    By definition and $\lambda=1/m$,
    \begin{equation}\label{eq:pair-proxy-sums}
    \begin{aligned}
        \frac1{2\lambda}\mathbb E[D\Delta\mid X]
        &=\frac{m}{2\sigma^2}\mathbb E[D_JY_J\,D_JH\mid X]
        =\frac1{2\sigma^2}\sum_{i=1}^m\mathbb E[D_iY_i\,D_iH\mid X]=\frac1{2\sigma^2}\sum_{i=1}^m A_i,\\
        \frac1\lambda\mathbb E[D^*\Delta\mid X]
        &=\frac{m}{\sigma^2}\mathbb E[|D_JY_J|\,D_JH\mid X]
        =\frac1{\sigma^2}\sum_{i=1}^m\mathbb E[|D_iY_i|\,D_iH\mid X]=\frac1{\sigma^2}\sum_{i=1}^m B_i.
    \end{aligned}
    \end{equation}

    Since $D_iY_i=\mathbb E[D_iH\mid X_1,\ldots,X_i,X_i']$,
    conditional expectation and orthogonality of the martingale differences give
    \begin{equation}\label{eq:proxy-mean-sums}
    \begin{aligned}
        \sum_{i=1}^m\mathbb EA_i
        =\sum_{i=1}^m\mathbb E[(D_iY_i)(D_iH)]&=\sum_{i=1}^m\mathbb E[(D_iY_i)^2]=2\sum_{i=1}^m\mathbb E[Y_i^2]=2\sigma^2,\\
        \sum_{i=1}^m\mathbb EB_i
        &=\sum_{i=1}^m\mathbb E[|D_iY_i|(D_iH)]=0.
    \end{aligned}
    \end{equation}
    Here $Y_i$ and $Y_i^{(i)}$ are conditionally independent copies with conditional mean zero given $\mathcal F_{i-1}$.
    Interchanging $X_i$ and $X_i'$ leaves $|D_iY_i|$ unchanged and reverses the sign of $D_iH$, giving $\mathbb EB_i=0$.

    Since $W$ is measurable with respect to $X$, Lemma~\ref{lem:gen-pair} and
    \eqref{eq:pair-proxy-sums}--\eqref{eq:proxy-mean-sums} yield
    \begin{equation}\label{eq:DK-proxy-sums}
    \begin{aligned}
        d_{\mathrm K}(W,Z)
        &\leq\frac1{2\sigma^2}\mathbb E\left|\sum_{i=1}^m A_i-2\sigma^2\right|
             +\frac1{\sigma^2}\mathbb E\left|\sum_{i=1}^m B_i\right|\\
        &\leq\frac1{2\sigma^2}\left\Vert\sum_{i=1}^m(A_i-\mathbb EA_i)\right\Vert_s
             +\frac1{\sigma^2}\left\Vert\sum_{i=1}^m(B_i-\mathbb EB_i)\right\Vert_s.
    \end{aligned}
    \end{equation}
    Apply Lemmas~\ref{lem:uniform-low-moment} to $\sum_i A_i$ and $\sum_i B_i$, 
    then apply Lemma~\ref{lem:proxy-differences} and Minkowski's inequality, we have
    \begin{align*}
        \left\Vert\sum_i A_i-\mathbb E\sum_i A_i\right\Vert_s+\left\Vert\sum_i B_i-\mathbb E\sum_i B_i\right\Vert_s
        \leq C_s\left\{\sum_j\left(\sum_i a_i b_{ij}\right)^s\right\}^{1/s}
        =C_s\mathcal Q_s(a,b).
    \end{align*}

    Substituting this estimate into \eqref{eq:DK-proxy-sums} proves \eqref{eq:abstract-main-bound}.

    Now suppose that $I$ is countably infinite.
    Choose increasing finite sets $I_r\uparrow I$, put
    \begin{align*}
        \mathcal G_r=\sigma(X_i:i\in I_r),
        \qquad H_r=\mathbb{E}[H\mid\mathcal G_r].
    \end{align*}

    Since $H$ is measurable with respect to $\sigma(X_i:i\in I)$, the martingale convergence theorem gives $H_r\to H$ in $L^p$.
    In particular, $\sigma_r^2:=\operatorname{Var} H_r\to\sigma^2>0$, so $\sigma_r>0$ for all sufficiently large $r$.

    The same conditional-expectation argument gives for $i,j\in I_r$,
    \begin{align}
        D_iH_r&=\mathbb{E}[D_iH\mid\mathcal G_r\vee\sigma(X_i')],\label{eq:conditional-first-r}\\
        D_iD_jH_r&=\mathbb{E}[D_iD_jH\mid\mathcal G_r\vee\sigma(X_i',X_j')], \qquad i\ne j.\label{eq:conditional-second-r}
    \end{align}

    Thus conditional Jensen inequality shows that the same $a_i,c_i$, and $b_{ij}$ dominate the resampling differences of $H_r$.  Define
    \begin{align*}
        \mathcal Q_{s,r}=\left(\sum_{j\in I_r}\left(\sum_{i\in I_r}a_i b_{ij}\right)^s\right)^{1/s}.
    \end{align*}

    All summands are nonnegative, and hence $\mathcal Q_{s,r}\leq \mathcal Q_s(a,b)$.  The result in finite-index case yields
    \begin{align}\label{eq:finite-approx-bound}
        d_{\mathrm K}\!\left(\frac{H_r-\mathbb{E} H_r}{\sigma_r},Z\right)
        \leq\frac{C_s\mathcal Q_s(a,b)}{\sigma_r^2}.
    \end{align}

    Let $W_r=(H_r-\mathbb{E} H_r)/\sigma_r$ and $W=(H-\mathbb{E} H)/\sigma$.  Then $W_r\to W$ in probability.  For every $z\in\mathbb{R}$ and $\delta>0$,
    \begin{align*}
        \mathbb{P}(W\leq z)
        \leq \mathbb{P}(W_r\leq z+\delta)+\mathbb{P}(|W_r-W|>\delta),
    \end{align*}
    and the analogous lower bound follows with $z-\delta$.  Since
    \begin{align*}
        \sup_z|\Phi(z+\delta)-\Phi(z)|\leq\frac{\delta}{\sqrt{2\pi}},
    \end{align*}
    we obtain
    \begin{align*}
        d_{\mathrm K}(W,Z)
        \leq d_{\mathrm K}(W_r,Z)+\mathbb{P}(|W_r-W|>\delta)+\frac{\delta}{\sqrt{2\pi}}.
    \end{align*}

    Take $r\to\infty$ in \eqref{eq:finite-approx-bound}, then let $\delta\downarrow0$. 
    This completes the proof for countable $I$.
\end{proof}

\subsection{Proofs for Theorem~\ref{thm:volume}--\ref{thm:euler}}\label{subsec:geometric-applications}

\begin{proof}[Proof of Theorem~\ref{thm:volume}]
    For $w\in L_n:=\{-n,\ldots,n-1\}^d$, let
    \begin{align*}
        \xi_w=\mu_{\operatorname{Vol}}(B_w)\leq 1.
    \end{align*}

    Since distinct cubes $B_w$, $w\in L_n$, overlap only on sets of Lebesgue measure zero,
    \begin{align*}
        \mu_{\operatorname{Vol}}(\Lambda_n)
        =\operatorname{Vol}\!\left(\Lambda_n\cap\{f\geq\ell\}_\infty\right)
        =\sum_{w\in L_n}\operatorname{Vol}\!\left(B_w\cap\{f\geq\ell\}_\infty\right)
        =\sum_{w\in L_n}\xi_w.
    \end{align*}

    In \eqref{eq:pstar-conjugacy}, take
    \begin{align}\label{eq:volume-pq}
        p=3s,\qquad p^*=3s/2.
    \end{align}

    Because $\beta>3sd$ is strict, choose $\eta>0$ so small that $\beta-\eta>3sd$.  
    By Lemma~\ref{lem:volume-envelope}, for every $v\in\mathbb Z^d$ and $w\in L_n$, there exists a nonnegative random variable $Y_v(w)$ satisfying
    \begin{align*}
        |D_v\xi_w(X)|\leq Y_v(w),\qquad
        \mathbb E\bigl(Y_v(w)\bigr)^r
        \leq C_r(1+|v-w|)^{-\beta+\eta},\qquad r\in\{p,p^*\}.
    \end{align*}

    For $u\ne v$, as $W\stackrel d=W^{(u)}$, the same result holds for $f^{(u)}=q*W^{(u)}$, which verifies the assumptions in Lemma~\ref{lem:local-sum}.
    Take $\theta_p=\theta_{p^*}=\beta-\eta$, then Lemma~\ref{lem:local-sum} implies \eqref{eq:local-general}.

    By \citet[Theorem~2.8(1)]{McAuleyQuasi2026}, under Assumption~\ref{ass:McAuley}, 
    $\operatorname{Var}\mu_{\operatorname{Vol}}(\Lambda_n)\asymp|\Lambda_n|=(2n)^d$.
    Since $|L_n|=(2n)^d$, \eqref{eq:local-rate} now gives \eqref{eq:vol-rate}.
\end{proof}

\begin{proof}[Proof of Theorem~\ref{thm:surface}]
    Recall that $\mu_{\operatorname{SA}}(B)=\mathcal H^{d-1}(B\cap\partial \{f\geq\ell\}_\infty)$. 
    For each fixed cubical face $F$ of dimension at most $d-1$,
    the inclusion $\partial \{f\geq\ell\}_\infty\subseteq\{f=\ell\}$ and Tonelli's theorem give
    \begin{align*}
        \mathbb E\mathcal H^{d-1}(F\cap\partial \{f\geq\ell\}_\infty)\leq\int_F\mathbb P(f(x)=\ell)\,d\mathcal H^{d-1}(x)=0.
    \end{align*}

    Thus all shared faces in $\Lambda_n$ contribute zero to $\mu_{\operatorname{SA}}(\Lambda_n)$ almost surely; see also \citet[Lemma~4.7]{McAuley2026a}. 
    Denote  $\xi_w=\mu_{\operatorname{SA}}(B_w)$ and $L_n=\{-n,\ldots,n-1\}^d$, we obtain
    \begin{align*}
        \mu_{\operatorname{SA}}(\Lambda_n)
        =\sum_{w\in L_n}\mu_{\operatorname{SA}}(B_w)
        =\sum_{w\in L_n}\xi_w.
    \end{align*}  
  
    Choose
    \begin{align}\label{eq:surface-pq}
        p=\frac{3sk'}{k'+s},\qquad p^*=\frac{3sk'}{2k'-s}.
    \end{align}

    Since $p<k'$, \citet[Proposition~4.6]{McAuley2026a} gives
    \begin{align*}
        \mathbb E|\xi_w|^{k'}
        \leq\mathbb E\bigl(\mathcal H^{d-1}(B_w\cap\{f=\ell\})\bigr)^{k'}
        \leq C,\qquad \mu_{\operatorname{SA}}(\Lambda_n)\in L^p.
    \end{align*}

    By Lemma~\ref{lem:surface-envelope}, for every $\eta>0$,
    $v\in\mathbb Z^d$, and $w\in L_n$, there exists a nonnegative
    random variable $Y_v(w)$ satisfying
    \begin{align*}
        |D_v\xi_w(X)|\leq Y_v(w),\qquad
        \mathbb E\bigl(Y_v(w)\bigr)^r
        \leq C_{r,\eta}(1+|v-w|)^{-\theta_r},\qquad r\in\{p,p^*\},
    \end{align*}
    where $\theta_r:=\beta(k'-r)/(3k'-1)-\eta$.
    By \eqref{eq:beta-SA}, choose $\eta>0$ sufficiently small that
    \begin{equation}\label{eq:surface-balance}
        \frac{\theta_p}{p}=\frac{\beta(k'-2s)}{3s(3k'-1)}-\frac{\eta}{p}>d,
        \qquad \frac{\theta_{p^*}}{p^*}=\frac{2\beta(k'-2s)}{3s(3k'-1)}-\frac{\eta}{p^*}>2d.
    \end{equation}

    For $u\ne v$, as $W\stackrel d=W^{(u)}$, the same result holds for $f^{(u)}=q*W^{(u)}$, which verifies the assumptions in Lemma~\ref{lem:local-sum}.
    Thus Lemma~\ref{lem:local-sum} implies \eqref{eq:local-general}. 
    Since \eqref{eq:beta-SA} implies \eqref{eq:SA-positive-decay}, Proposition~\ref{prop:SA-positive} gives
    \begin{align*}
        \operatorname{Var}\mu_{\operatorname{SA}}(\Lambda_n)
        \geq c|L_n|=c(2n)^d
    \end{align*}
    for all sufficiently large $n$. Hence \eqref{eq:local-general} yields
    \begin{align*}
        d_{\mathrm K}\!\left(
        \frac{\mu_{\operatorname{SA}}(\Lambda_n)-\mathbb E\mu_{\operatorname{SA}}(\Lambda_n)}
        {\sqrt{\operatorname{Var}\mu_{\operatorname{SA}}(\Lambda_n)}},Z\right)
        \leq\frac{C|L_n|^{1/s}}{\operatorname{Var}\mu_{\operatorname{SA}}(\Lambda_n)}
        \leq Cn^{-d(1-1/s)},
    \end{align*}
    proving \eqref{eq:SA-rate}.
\end{proof}

\begin{proof}[Proof of Theorem~\ref{thm:euler}]
    For fixed $n$, recall \eqref{eq:EC-cube-score} and define
    \begin{align*}
        \xi_w(X)=\mu_{\operatorname{EC}}(B_w;f)
        -\mu_{\operatorname{EC}}(\Gamma_{n,w};f),\qquad w\in I_n.
    \end{align*}

    Thus by Lemma~\ref{lem:EC-decomp}, $\mu_{\operatorname{EC}}(\Lambda_n)=\sum_{w\in I_n}\xi_w(X)$.

    Choose
    \begin{equation}\label{eq:euler-pq}
        p=\frac{3s(k-1)}{k+s-1},\qquad p^*=\frac{3s(k-1)}{2k-s-2}.
    \end{equation}

    Since $k-1>2s$, we have $2s<p<k-1$, $1<p^*\leq p$, and $1/p+1/p^*=1/s$. 
    By \eqref{eq:EC-score-moment},
    \begin{align*}
        \sup_{n\geq1}\sup_{w\in I_n}\mathbb E|\xi_w(X)|^{k-1}
        \leq C,\qquad \mu_{\operatorname{EC}}(\Lambda_n)\in L^p.
    \end{align*}

    By Lemma~\ref{lem:EC-envelope}, for every $\eta>0$, $v\in\mathbb Z^d$, and $w\in I_n$, 
    there exists a nonnegative random variable $Y_v(w)$ satisfying
    \begin{align*}
        |D_v\xi_w(X)|\leq Y_v(w),\qquad
        \mathbb E\bigl(Y_v(w)\bigr)^r
        \leq C_{r,\eta}(1+|v-w|)^{-\theta_r},\qquad r\in\{p,p^*\},
    \end{align*}
    where $\theta_r:=\beta(k-1-r)/(k-1)-\eta$
    and the constants are uniform in $n,v,w$. 
    By \eqref{eq:beta-EC}, choose $\eta>0$ sufficiently small that
    \begin{equation}\label{eq:euler-balance}
        \frac{\theta_p}{p}=\frac{\beta(k-2s-1)}{3s(k-1)}-\frac{\eta}{p}>d,
        \qquad \frac{\theta_{p^*}}{p^*}=\frac{2\beta(k-2s-1)}{3s(k-1)}-\frac{\eta}{p^*}>2d.
    \end{equation}

    For $u\ne v$, as $W\stackrel d=W^{(u)}$, the same result holds for $f^{(u)}=q*W^{(u)}$, which verifies the assumptions in Lemma~\ref{lem:local-sum}.
    Thus Lemma~\ref{lem:local-sum} implies \eqref{eq:local-general} with $L=I_n$.

    Since \eqref{eq:beta-EC} implies \eqref{eq:EC-positive-decay}, Proposition~\ref{prop:EC-positive} gives
    \begin{align*}
        \operatorname{Var}\mu_{\operatorname{EC}}(\Lambda_n)
        \geq c|I_n|=c(2n)^d
    \end{align*}
    for all sufficiently large $n$. Hence \eqref{eq:local-general} yields
    \begin{align*}
        d_{\mathrm K}\!\left(
        \frac{\mu_{\operatorname{EC}}(\Lambda_n)-\mathbb E\mu_{\operatorname{EC}}(\Lambda_n)}
        {\sqrt{\operatorname{Var}\mu_{\operatorname{EC}}(\Lambda_n)}},Z\right)
        \leq\frac{C|I_n|^{1/s}}{\operatorname{Var}\mu_{\operatorname{EC}}(\Lambda_n)}
        \leq Cn^{-d(1-1/s)},
    \end{align*}
    proving the upper bound in \eqref{eq:EC-rate}.

    To prove the lower bound in \eqref{eq:EC-rate}, denote $\sigma_n=\sqrt{\operatorname{Var}\mu_{\operatorname{EC}}(\Lambda_n)}$.
    Proposition~\ref{prop:EC-positive} gives $\sigma_n\asymp n^{d/2}$ for all sufficiently large $n$.
    Chebyshev's inequality yields
    \begin{align*}
        \mathbb P\!\left(\left|\mu_{\operatorname{EC}}(\Lambda_n)-\mathbb E\mu_{\operatorname{EC}}(\Lambda_n)\right|\leq2\sigma_n\right)
        \geq\frac34.
    \end{align*}

    Note that $\mu_{\operatorname{EC}}\in\mathbb{Z}$. 
    An interval of length $4\sigma_n$ contains at most $4\sigma_n+2$ integers, 
    thus there exists $x=x_n\in\mathbb{Z}$, such that $\mathbb{P}(\mu_{\operatorname{EC}}(\Lambda_n)=x)\geq3/[4(4\sigma_n+2)]$. 
    Let $\tilde{\Phi}$ denote the distribution function of $\mathcal{N}(\mathbb E\mu_{\operatorname{EC}}(\Lambda_n),\sigma_n^2)$, which is continuous.
    Then for any $1>\delta>0$,
    \begin{align*}
        d_K\left(\frac{\mu_{\operatorname{EC}}(\Lambda_n)-\mathbb E\mu_{\operatorname{EC}}(\Lambda_n)}{\sigma_n},Z\right)
        &=d_K(\mu_{\operatorname{EC}}(\Lambda_n),\mathcal{N}(\mathbb E\mu_{\operatorname{EC}}(\Lambda_n),\sigma_n^2))\\
        &\geq\max\left\{\left|\mathbb{P}(\mu_{\operatorname{EC}}(\Lambda_n)\leq x)-\tilde{\Phi}(x)\right|,\left|\mathbb{P}(\mu_{\operatorname{EC}}(\Lambda_n)<x)-\tilde{\Phi}(x-\delta)\right|\right\}\\
        &\geq\frac{1}{2}\left|\mathbb{P}(\mu_{\operatorname{EC}}(\Lambda_n)\leq x)-\tilde{\Phi}(x)-\mathbb{P}(\mu_{\operatorname{EC}}(\Lambda_n)<x)+\tilde{\Phi}(x-\delta)\right|\\
        &\geq\frac{1}{2}\mathbb{P}(\mu_{\operatorname{EC}}(\Lambda_n)=x)-\frac{\tilde{\Phi}(x)-\tilde{\Phi}(x-\delta)}{2}\\
        &\geq cn^{-d/2}-\frac{\tilde{\Phi}(x)-\tilde{\Phi}(x-\delta)}{2}.
    \end{align*}

    Let $\delta\rightarrow0$, this proves the lower bound in \eqref{eq:EC-rate}.
\end{proof}

\section{Proofs of the auxiliary lemmas}\label{sec:lemma-proofs}
In this section, we prove the lemmas stated in Section~\ref{sec:mixed-moment-tools}.

\subsection{Proofs of Lemma~\ref{lem:doob-identities}--\ref{lem:proxy-differences}}\label{subsec:proof-exchangeable}
\begin{proof}[Proof of Lemma~\ref{lem:doob-identities}]
    For any $i\in[m]$,condition on $J=i$, interchange $X_i$ and $X_i'$.
    This changes $(W,W',D,|D|)$ into $(W',W,-D,|D|)$ and preserves the joint distribution.
    Averaging over $J$ proves \eqref{eq:doob-joint-symmetry}.

    Since $X_i'$ is independent of $X$, recall $Y_i=\mathbb{E}[H\mid\mathcal{F}_i]-\mathbb{E}[H\mid\mathcal{F}_{i-1}]$ is a martingale difference, 
    and $h_i$ is a measurable version of $Y_i$,
    \begin{align*}
        \mathbb E[Y_i^{(i)}\mid X]=\mathbb{E}[h_i(X_1,\cdots,X_{i-1},X_i')\mid X]=0.
    \end{align*}

    Hence
    \begin{align*}
        \mathbb{E}[D\mid X]=\frac{1}{m\sigma}\sum_{i=1}^mY_i=\frac{H}{m\sigma},
    \end{align*}
    which proves \eqref{eq:regression-doob}.

    Additionally we have
    \begin{align*}
        D_iY_i&=Y_i-Y_i^{(i)}\\
        &=\mathbb{E}[H\mid\mathcal{F}_i]-\mathbb{E}[H\mid\mathcal{F}_{i-1}]-\mathbb{E}[H(X^{\{i\}})\mid\sigma(\mathcal{F}_{i-1},X_i')]+\mathbb{E}[H(X^{\{i\}})\mid\mathcal{F}_{i-1}]\\
        &=\mathbb{E}[D_iH\mid X_1,\ldots,X_i,X_i'].
    \end{align*}

    Thus conditional Jensen inequality proves \eqref{eq:DiYi-contract}. 
    If $j<i$, the resampling difference in coordinate $j$ commutes with integration over future coordinates, a similar argument giving
    \begin{align*}
        D_jD_iY_i=\mathbb{E}[D_jD_iH\mid X_1,\ldots,X_i,X_i',X_j'];
    \end{align*}
    conditional Jensen inequality gives the first part of \eqref{eq:DjDiYi-contract}.  
    If $j>i$, $Y_i$ does not depend on $X_j$, thus $D_jD_iY_i=0$.
\end{proof}

\begin{proof}[Proof of Lemma~\ref{lem:proxy-differences}]
    First take $j\ne i$.
    Put $U=D_iY_i$ and $T=D_iH$, and let $U^{(j)}$ denote $U$ with $X_j$ replaced by $X_j'$.
    Since $X_i'$ is independent of $\sigma(X,X_j')$,
    \begin{align*}
        D_jA_i=\mathbb E[D_j(UT)\mid X,X_j'].
    \end{align*}

    By product-difference identity
    \begin{align*}
        D_j(UT)=(D_jU)T+U^{(j)}D_jT,
    \end{align*}
    conditional Jensen inequality, mixed H\"older inequality and \eqref{eq:pstar-conjugacy},
    \begin{align*}
        \Vert D_jA_i\Vert_s
        \leq \Vert D_jU\Vert_{p^*}\Vert T\Vert_p+\Vert U^{(j)}\Vert_p\Vert D_jT\Vert_{p^*}
        \leq 2a_ib_{ij}.
    \end{align*}

    The same calculation applies to $B_i$ because $|D_j|U||\leq|D_jU|$.

    When $j=i$, $A_i$ is a function of $X$ alone, so $A_i(X^{\{i\}})$ has the same distribution as $A_i(X)$. The triangle inequality therefore gives
    \begin{align*}
        \Vert D_iA_i\Vert_s\leq 2\Vert A_i\Vert_s\leq 2\Vert D_iY_i\Vert_p\Vert D_iH\Vert_{p^*}\leq 2a_ic_i=2a_ib_{ii}.
    \end{align*}

    Similarly we have 
    \begin{align*}
        \Vert D_iB_i\Vert_s\leq 2a_ic_i=2a_ib_{ii}.
    \end{align*}
\end{proof}

\begin{proof}[Proof of Lemma~\ref{lem:uniform-low-moment}]
    This is a consequence of the martingale moment inequality of \citet[Theorem~2]{vonBahrEsseen1965}, 
    applied to the L\'evy martingale of $G$. 

    First suppose that $I=[m]$ be a finite index set. 
    Let $\mathcal F_0$ be the trivial sigma-field and 
    \begin{align*}
        \mathcal F_j=\sigma(X_1,\ldots,X_j),\qquad
        e_j=\mathbb E[G\mid\mathcal F_j]-\mathbb E[G\mid\mathcal F_{j-1}],
        \qquad S_j=\sum_{k=1}^j e_k,
    \end{align*}
    with $S_0=0$. Then $\mathbb E[e_j\mid\mathcal F_{j-1}]=0$ and $S_m=G-\mathbb EG$. Independence gives
    \begin{align*}
        \mathbb E[G(X^{\{j\}})\mid\mathcal F_j]=\mathbb E[G\mid\mathcal F_{j-1}],
        \qquad e_j=\mathbb E[D_jG\mid\mathcal F_j].
    \end{align*}

    Hence conditional Jensen's inequality yields
    \begin{equation}\label{eq:low-moment-increment}
        \mathbb E|e_j|^s\leq\mathbb E|D_jG|^s.
    \end{equation}

    Then von Bahr--Esseen inequality gives 
    \begin{align}\label{eq:low-moment-sum}
        \mathbb{E}[\left|G-\mathbb{E}G\right|^s]=\mathbb{E}[\left|S_m\right|^s]\leq 2\sum_{j=1}^m\mathbb{E}[\left|e_j\right|^s]\leq 2\sum_{j=1}^m\mathbb E|D_jG|^s,
    \end{align}
    which yields \eqref{eq:uniform-low-moment} when $I$ is finite.

    If $I$ is countably infinite, enumerate $(X)$ as $X_1,X_2,\ldots$ and use the same notations $\mathcal F_j,e_j,S_j$. 
    The conditional-expectation identity above and \eqref{eq:low-moment-increment} remain valid for every finite $m$. 
    Moreover, Doob's martingale convergence theorem gives
    \begin{align*}
        S_m=\mathbb E[G\mid\mathcal F_m]-\mathbb EG\longrightarrow G-\mathbb EG\qquad\text{in }L^s.
    \end{align*}

    As $G\in L^s$, the L.H.S of \eqref{eq:uniform-low-moment} is finite.
    Thus letting $m\to\infty$ in \eqref{eq:low-moment-sum} proves the claim with the same constant, even if the R.H.S of \eqref{eq:uniform-low-moment} is infinite.
\end{proof}

\subsection{Proof of Lemma~\ref{lem:local-sum}}\label{subsec:proof-spatial}

\begin{proof}
    By \eqref{eq:site-envelope}, for $u\in\{p,p^*\}$, $i\in\mathbb{Z}^d$,
    \begin{align*}
        \Vert D_i H\Vert_u\leq \sum_{w\in L}\Vert D_i\xi_w\Vert_u\leq C_u^{1/u}\sum_{w\in L}(1+\left|i-w\right|)^{-\theta_u/u}.
    \end{align*}

    Therefore we choose 
    \begin{align*}
        a_i=C_p^{1/p}\sum_{w\in L}(1+\left|i-w\right|)^{-\theta_p/p},\quad c_i=b_{ii}=C_{p^*}^{1/p^*}\sum_{w\in L}(1+\left|i-w\right|)^{-\theta_{p^*}/p^*}.
    \end{align*}

    Now we turn to $D_iD_jH$ for $i\not=j\in\mathbb{Z}^d$. Note that 
    \begin{align*}
        D_iD_j\xi_w=D_jD_i\xi_w=D_i\xi_w-D_i\xi_w(X^{\{j\}}).
    \end{align*}

    Then we have 
    \begin{align*}
        \Vert D_iD_j\xi_w\Vert_{p^*}\leq 2C_{p^*}^{1/p^*}\min\{(1+\left|i-w\right|)^{-\theta_{p^*}/p^*},(1+\left|j-w\right|)^{-\theta_{p^*}/p^*}\},
    \end{align*}
    and thus
    \begin{align*}
        \Vert D_iD_jH\Vert_{p^*}\leq 2C_{p^*}^{1/p^*}\sum_{w\in L}\min\{(1+\left|i-w\right|)^{-\theta_{p^*}/p^*},(1+\left|j-w\right|)^{-\theta_{p^*}/p^*}\}:=b_{ij}.
    \end{align*}

    By \eqref{eq:mixed-summability-criterion} and \eqref{eq:site-kernel-sums},
    \begin{align*}
        a_i\leq C_p^{1/p}K_p<\infty,\qquad
        c_i\leq C_{p^*}^{1/p^*}K_{p^*}<\infty.
    \end{align*}

    In the sequel, $C$ depends only on
    $(d,p,p^*,s,\theta_p,\theta_{p^*},C_p,C_{p^*})$
    and may change from line to line.

    Therefore, writing $S_j=\sum_{i\in\mathbb{Z}^d}a_ib_{ij}$, we have 
    \begin{align*}
        S_j&\leq C\left(c_j+\sum_{i\ne j}b_{ij}\right)\\
        &\leq C\sum_{w\in L}\sum_{i\ne j}
        \min\{(1+\left|i-w\right|)^{-\theta_{p^*}/p^*},
        (1+\left|j-w\right|)^{-\theta_{p^*}/p^*}\}\\
        &\qquad{}+C\sum_{w\in L}(1+\left|j-w\right|)^{-\theta_{p^*}/p^*}\\
        &\leq C\sum_{w\in L}(1+\left|j-w\right|)^{-\theta_{p^*}/(2p^*)}
        \sum_{i\ne j}(1+\left|i-w\right|)^{-\theta_{p^*}/(2p^*)}\\
        &\qquad{}+C\sum_{w\in L}(1+\left|j-w\right|)^{-\theta_{p^*}/p^*}\\
        &\leq C\sum_{w\in L}\left((1+\left|j-w\right|)^{-\theta_{p^*}/(2p^*)}
        +(1+\left|j-w\right|)^{-\theta_{p^*}/p^*}\right)\\
        &\leq C.
    \end{align*}

    By definition of $\mathcal{Q}_s(a,b)$ in \eqref{eq:Q-def-main},
    \begin{align}\label{eq:Q-order}
        \mathcal{Q}_s(a,b)
        =\left(\sum_j S_j^{s-1}S_j\right)^{1/s}
        \leq C
        \left(\sum_{j\in\mathbb{Z}^d}S_j\right)^{1/s},
    \end{align}
    where
    \begin{align}\label{eq:S-sum}
        \sum_{j\in\mathbb{Z}^d}S_j&\leq C\sum_{j\in\mathbb{Z}^d}\sum_{w\in L}
        \left((1+\left|j-w\right|)^{-\theta_{p^*}/(2p^*)}
        +(1+\left|j-w\right|)^{-\theta_{p^*}/p^*}\right)\notag\\
        &\leq C\left|L\right|.
    \end{align}

    Combining \eqref{eq:Q-order} and \eqref{eq:S-sum}, we prove \eqref{eq:Q-local}. 
    Therefore \eqref{eq:local-general} and \eqref{eq:local-rate} follow from Theorem~\ref{thm:abstract}.
\end{proof}

\subsection{Proofs of Lemma~\ref{lem:volume-envelope}--\ref{lem:EC-envelope}}\label{subsec:proof-geometric}

Consider a perturbation $p$ of a given Gaussian field $f$. 
Recall the finite-range unstable set defined in \citet{McAuley2026a} is 
\begin{equation}\label{eq:unstable-sets}
\begin{aligned}
    \mathcal{U}_{\mathrm{FR}}(f,p)= \mathcal{U}_{\mathrm{Loc}}(f,p)\cup
    \bigcup_{v \in \mathcal{U}_{\mathrm{Loc}}(f,p)}
    \left\{w\in \mathbb{Z}^d:B_w\overset{\{f\ge\ell\}_{<\infty}}{\longleftrightarrow}B_v
    \,\text{or}\,B_w\overset{\{f+p\ge\ell\}_{<\infty}}{\longleftrightarrow}B_v\right\},
\end{aligned}
\end{equation}
where
\begin{equation*}
\begin{split}
    \mathcal{U}_{\mathrm{Loc}}(f,p)=\bigl\{w\in\mathbb{Z}^d:\;&
    \exists\,t\in[0,1],\ S\in F(B_w),\ x\in S,\\
    &(f+tp)(x)=\ell,\ \nabla_S(f+tp)(x)=\mathbf{0}\bigr\}.
\end{split}
\end{equation*}

Here $\{f\ge\ell\}_{<\infty}$ denotes union of bounded components of excursion set $\{f\geq\ell\}$, 
$A\overset{B}{\longleftrightarrow}C$ means that a continuous curve in $B$ connects $A$ and $C$,
and $F(B_w)$ is the collection of strata of $B_w$. 

For any $w\in\mathbb{Z}^d\cap\mathcal{U}_{\mathrm{FR}}(f,p)^C$, the perturbation affects the target functionals (Vol, SA, EC) in $B_w$ only through geometric changes.
Thus it is important to control $\mathbb P(w\in\mathcal U_{\mathrm{FR}}(f,p))$ for each $w\in\mathbb Z^d$.
For details of these notations, see \citet[Section~3]{McAuley2026a}.

\begin{proof}[Proof of Lemma~\ref{lem:volume-envelope}]
    For the original resampling pair $(f,f+p_v)$, let
    \begin{align*}
        Y_v^{\operatorname{Vol}}(w)=\operatorname{Vol}\bigl(B_w\cap\{|f-\ell|\leq|p_v|\}\bigr)+\mathbf1_{\{w\in\mathcal U_{\mathrm{FR}}(f,p_v)\}}.
    \end{align*}

    The proof of \citet[Theorem~1.4]{McAuley2026a} gives 
    \begin{align*}
        |D_v\mu_{\operatorname{Vol}}(B_w)|\leq Y_v^{\operatorname{Vol}}(w)\leq 2,
        \quad \mathbb EY_v^{\operatorname{Vol}}(w)\leq C_\eta R(v,w)^{-\beta+\eta}.
    \end{align*}

    Hence, for every $r\geq1$,
    \begin{align*}
        \mathbb E\bigl(Y_v^{\operatorname{Vol}}(w)\bigr)^r
        \leq2^{r-1}\mathbb EY_v^{\operatorname{Vol}}(w)
        \leq C_{r,\eta}R(v,w)^{-\beta+\eta}.
    \end{align*}

    This proves \eqref{eq:vol-envelope}--\eqref{eq:vol-r-envelope}.
\end{proof}

\begin{proof}[Proof of Lemma~\ref{lem:surface-envelope}]
    Let $f^{(v)}=f+p_v$, $R=R(v,w)$, and 
    \begin{align*}
        \mathcal L(w)=\mathcal H^{d-1}(B_w\cap\{f=\ell\})+\mathcal H^{d-1}(B_w\cap\{f^{(v)}=\ell\}).
    \end{align*}
    
    Let $k'=\frac{k(k+1)}{2}-2$, then by \citet[Lemma~4.3 and Proposition~4.6]{McAuley2026a} (see also \citet{ArmentanoEtAl2023}),
    \begin{align}\label{SA-moment control}
        |D_v\mu_{\operatorname{SA}}(B_w)|\leq\mathcal L(w),\quad \mathbb E\mathcal L(w)^{k'}\leq C.
    \end{align}

    By \citet[Lemma~4.8]{McAuley2026a}, there exists a controlling random variable, which we denote by $\widehat Y_v(w)$, such that 
    \begin{align*}
        |D_v\mu_{\operatorname{SA}}(B_w)|\leq\widehat Y_v(w).
    \end{align*}
    
    Additionally, for any $\delta>0$, there exists $C=C_{\delta}$, such that 
    \begin{align*}
        \mathbb{E}[\widehat Y_v(w)]\leq C_{\delta}\mathbb{P}(w\in\mathcal{U}_{\mathrm{FR}}(f,p_v))^{(k'-1)/k'}+C_{\delta}M_2(w)^{\frac{\gamma-\delta}{2}},
    \end{align*}
    where $\gamma=\frac{k'-1}{3k'-1}$, and
    \begin{align*}
        M_2(w)=\sup_{|\alpha|,|\gamma|\leq2}\sup_{x,y\in w+\Lambda_2}
        \left|\partial_x^\alpha\partial_y^\gamma\operatorname{Cov}(p_v(x),p_v(y))\right|\leq CR^{-2\beta}
    \end{align*}
    by (M3) in Assumption~\ref{ass:McAuley},

    Denote
    \begin{align*}
        Y_v^{\operatorname{SA}}(w):=\min\{\widehat Y_v(w),\mathcal L(w)\}.
    \end{align*}
    
    Combining the estimates above, and by \citet[Lemma~3.5]{McAuley2026a}, we have 
    \begin{align}\label{eq:SA-envelope-endpoint-moment}
        |D_v\mu_{\operatorname{SA}}(B_w)|\leq Y_v^{\operatorname{SA}}(w),\quad \mathbb E\bigl(Y_v^{\operatorname{SA}}(w)\bigr)^{k'}\leq C,
    \end{align}
    and for $0<\delta<\gamma$ and $\eta_0>0$,
    \begin{align*}
        \mathbb EY_v^{\operatorname{SA}}(w)
        &\leq C_\delta\mathbb P\bigl(w\in\mathcal U_{\mathrm{FR}}(f,p_v)\bigr)^{(k'-1)/k'}
           +C_\delta M_2(w)^{(\gamma-\delta)/2}\\
        &\leq C_{\delta,\eta_0}
           \bigl(R^{-(\beta-\eta_0)(k'-1)/k'}
                  +R^{-\beta(\gamma-\delta)}\bigr).
    \end{align*}

    Since $(k'-1)/k'>\gamma$, choosing $\delta, \eta_0$ sufficiently small gives,
    for every $\varepsilon>0$,
    \begin{equation}\label{eq:SA-envelope-first-moment}
        \mathbb EY_v^{\operatorname{SA}}(w)
        \leq C_\varepsilon R^{-\beta(k'-1)/(3k'-1)+\varepsilon}.
    \end{equation}

    For $1\leq r<k'$, let $\theta=(k'-r)/(k'-1)$. 
    H\"older's inequality, together with \eqref{eq:SA-envelope-endpoint-moment}--\eqref{eq:SA-envelope-first-moment}, yields
    \begin{align}\label{eq:SA-envelope-interpolation}
        \mathbb E\bigl(Y_v^{\operatorname{SA}}(w)\bigr)^r
        &\leq\bigl(\mathbb EY_v^{\operatorname{SA}}(w)\bigr)^\theta\bigl(\mathbb E(Y_v^{\operatorname{SA}}(w))^{k'}\bigr)^{1-\theta}\notag\\
        &\leq C_{r,\varepsilon}R^{-\beta(k'-r)/(3k'-1)+\theta\varepsilon}.
    \end{align}

    Taking $\theta\varepsilon\leq\eta$ proves \eqref{eq:SA-r-envelope}.
\end{proof}

\begin{proof}[Proof of Lemma~\ref{lem:EC-decomp}]
    Enumerate $I_n=\{w_1,\ldots,w_M\}$ in increasing lexicographic order, where $M=(2n)^d$. 
    Then
    \begin{align*}
        B_{w_t}\cap\bigcup_{s>t}B_{w_s}=\bigcup_{\substack{j\in[d]\colon w_{t,j}<n-1}}\bigl(B_{w_t}\cap B_{w_t+e_j}\bigr)=\Gamma_{n,w_t}.
    \end{align*}

    Iterating \eqref{eq:EC-valuation} gives
    \begin{align*}
        \mu_{\operatorname{EC}}(\Lambda_n;f)
        &=\mu_{\operatorname{EC}}\!\left(\bigcup_{t=1}^M B_{w_t};f\right)\\
        &=\sum_{t=1}^M\left[\mu_{\operatorname{EC}}(B_{w_t};f)-\mu_{\operatorname{EC}}\!\left(B_{w_t}\cap\bigcup_{s>t}B_{w_s};f\right)\right]\\
        &=\sum_{t=1}^M\left[\mu_{\operatorname{EC}}(B_{w_t};f)-\mu_{\operatorname{EC}}(\Gamma_{n,w_t};f)\right]\\
        &=\sum_{w\in I_n}\xi_w(f).
    \end{align*}

    Each field obtained by resampling one or two blocks has the same distribution as $f$. 
    Taking first and second resampling differences proves \eqref{eq:EC-first-difference-sum}--\eqref{eq:EC-second-difference-sum}.
\end{proof}

\begin{proof}[Proof of Lemma~\ref{lem:EC-envelope}]
    Fix $v\in\mathbb Z^d$, $f^{(v)}=f+p_v$ be the new field obtained by exchange $X_v$ to $X_v'$.
    By \eqref{eq:EC-valuation},
    \begin{align*}
        D_v\xi_w(f)=\left[\mu_{\operatorname{EC}}(B_w,f)-\mu_{\operatorname{EC}}(B_w,f^{(v)})\right]-
        \left[\mu_{\operatorname{EC}}(\Gamma_{n,w},f)-\mu_{\operatorname{EC}}(\Gamma_{n,w},f^{(v)})\right].
    \end{align*}

    First assume that $w\notin\mathcal{U}_{\mathrm{FR}}(f,p_v)$, by \citet[Lemma~3.3]{McAuley2026a}, there exists a stratified homeomorphism $H_1$, such that 
    \begin{align*}
        H_1\left(\{f\geq\ell\}_{\infty}\cap B_w\right)=\{f^{(v)}\geq\ell\}_{\infty}\cap B_w,\quad H_1(\Gamma_{n,w})=\Gamma_{n,w}.
    \end{align*}

    Therefore we have for the union of upper strata $\Gamma_{n,w}$,
    \begin{align*}
        H_1\left(\{f\geq\ell\}_{\infty}\cap\Gamma_{n,w}\right)&=H_1\left(\{f\geq\ell\}_{\infty}\cap B_w\cap\Gamma_{n,w}\right)\\
        &=H_1\left(\{f\geq\ell\}_{\infty}\cap B_w\right)\cap H_1(\Gamma_{n,w})\\
        &=\{f^{(v)}\geq\ell\}_{\infty}\cap\Gamma_{n,w}.
    \end{align*}

    Combining these together yields
    \begin{align*}
        D_v\xi_w(f)=D_v\xi_w(f)\mathbf{1}_{w\in\mathcal{U}_{\mathrm{FR}}(f,p_v)}.
    \end{align*}

    In the sequel assume $w\in\mathcal{U}_{FR}(f,p_v)$. 
    For $u\in\{f,f^{(v)}\}$, let $N_w(u)=N_{\operatorname{Crit}}(B_w,u)$ count the critical points over all open strata of $B_w$, including its vertices. 
    For details of this notation, see \citet[Section~4]{McAuley2026a}. Additionally, by proof of \citet[Theorem~1.4]{McAuley2026a}, define
    \begin{equation}\label{eq:EC-cube-control}
        Y_v^{\operatorname{EC}}(w)=C_d\bigl(N_w(f)+N_w(f^{(v)})\bigr)\mathbf1_{\{w\in\mathcal U_{\mathrm{FR}}(f,p_v)\}}.
    \end{equation}

    By \citet[Lemma~4.12]{McAuley2026a},
    \begin{align}\label{eq:ec-upper bound}
        \max\left\{\left|\mu_{\operatorname{EC}}(B_w,u)\right|,\left|\mu_{\operatorname{EC}}(\Gamma_{n,w},u)\right|\right\}\leq \frac{C_d}{2}N_w(u).
    \end{align}

    Thus \eqref{eq:EC-local-envelope} follows from 
    \begin{align*}
        \left|D_v\xi_w(f)\right|&=\left|\xi_w(f)-\xi_w(f^{(v)})\right|\mathbf{1}_{w\in\mathcal{U}_{\mathrm{FR}}(f,p_v)}\\
        &\leq\left(\left|\xi_w(f)\right|+\left|\xi_w(f^{(v)})\right|\right)\mathbf{1}_{w\in\mathcal{U}_{\mathrm{FR}}(f,p_v)}\\
        &\leq Y_v^{\operatorname{EC}}(w)
    \end{align*}

    Since $f^{(v)}\stackrel d=f$, \citet[Theorem~1.2 and Remark~1.3]{GassStecconi2024} (see also \citet[Proposition~4.14]{McAuley2026a}), 
    applied to the finitely many positive-dimensional strata, and \citet[Lemma~3.5]{McAuley2026a}, applied to $(f,f+p_v)$, give
    \begin{align}\label{eq:EC-score-critical-bound}
        \mathbb E\bigl(N_w(f)+N_w(f^{(v)})\bigr)^{k-1}\leq C,
        \quad
        \mathbb P(w\in\mathcal U_{\mathrm{FR}}(f,p_v))\leq C_\delta(1+|v-w|)^{-\beta+\delta}.
    \end{align}

    For $1\leq r<k-1$, H\"older's inequality gives
    \begin{align*}
        \mathbb E\bigl(Y_v^{\operatorname{EC}}(w)\bigr)^r
        &\leq C_{d,r}\left\{\mathbb E\bigl(N_w(f)+N_w(f^{(v)})\bigr)^{k-1}\right\}^{r/(k-1)}\mathbb P(w\in\mathcal U_{\mathrm{FR}}(f,p_v))^{1-r/(k-1)}\\
        &\leq C_{r,\delta}(1+|v-w|)^{-(\beta-\delta)(k-r-1)/(k-1)}.
    \end{align*}

    Choosing $\delta>0$ so that $\delta(k-r-1)/(k-1)\leq\eta$ proves \eqref{eq:EC-r-envelope}. 
    Finally, \eqref{eq:EC-cube-score}, \eqref{eq:ec-upper bound} and \eqref{eq:EC-score-critical-bound} prove \eqref{eq:EC-score-moment}.
\end{proof}

\section{Variance of Surface Area and Euler characteristic}\label{sec:variance-positivity}

Under the respective assumptions of Propositions~\ref{prop:SA-positive}
and~\ref{prop:EC-positive}, \citet[Theorem~1.4]{McAuley2026a} gives
\begin{align*}
    \frac{\operatorname{Var}\mu_\star(\Lambda_n;f)}{(2n)^d}
    \longrightarrow \sigma_\star^2(\ell)\in[0,\infty),
    \qquad \star\in\{\operatorname{SA},\operatorname{EC}\}.
\end{align*}

It remains to prove that $\sigma_\star^2(\ell)$ are strictly positive.
The key observations in this section are Lemma~\ref{lem:Gaussian-response} and Lemma~\ref{lem:FR-nonnegative}.

\subsection{Positivity of $\sigma_\star^2(\ell)$}\label{subsec:Gaussian-response}

The following lemma is an extension of \citet[Lemma~5.2]{McAuley2026a},
which relates positivity of $\sigma_\star^2(\ell)$ to the expectation of first order difference of target functionals.

\begin{lemma}\label{lem:Gaussian-response}
    Let $\star\in\{\operatorname{SA},\operatorname{EC}\}$.
    Suppose that Assumption~\ref{ass:McAuley} holds, $q\geq0$, 
    together with \eqref{eq:SA-positive-decay} when $\star=\operatorname{SA}$ or \eqref{eq:EC-positive-decay} when $\star=\operatorname{EC}$.
    Let $0\ne h\in L^2(\mathbb R^d)$ is supported in $B=\Lambda_M$, where
    $M\in\mathbb N$ is fixed, and define the corresponding perturbation
    \begin{align*}
        \rho(x)=q*h(x)=\int_{\mathbb R^d}q(x-y)h(y)\,dy.
    \end{align*}
    
    Then the limit difference
    \begin{equation}\label{eq:fixed-direction-response}
        R_\star(h):=\lim_{n\to\infty}\left\{\mathbb E\mu_\star(\Lambda_n;f+\rho)-\mathbb E\mu_\star(\Lambda_n;f)\right\}
    \end{equation}
    exists and is finite.

    Further,
    \begin{align*}
        |R_\star(h)|^2\leq |B|e^{\|h\|_2^2}\sigma_\star^2(\ell).
    \end{align*}

    In particular, if $R_\star(h)\ne0$, then $\sigma_\star^2(\ell)>0$.
\end{lemma}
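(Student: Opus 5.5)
The plan follows the Cameron--Martin argument behind \citet[Lemma~5.2]{McAuley2026a}. Since $f=q*W$ and $\rho=q*h$, the shifted field $f+\rho$ is $q*(W+h)$. By the Cameron--Martin theorem, the law of $W+h$ is absolutely continuous with respect to that of $W$, with density
\begin{align*}
    \mathcal E_h=\exp\Bigl(W(h)-\tfrac12\|h\|_2^2\Bigr).
\end{align*}
Hence, for every $n$,
\begin{align*}
    \mathbb E\mu_\star(\Lambda_n;f+\rho)-\mathbb E\mu_\star(\Lambda_n;f)=\mathbb E\bigl[(\mathcal E_h-1)\,\mu_\star(\Lambda_n;f)\bigr]=\operatorname{Cov}\bigl(\mathcal E_h,\mu_\star(\Lambda_n;f)\bigr).
\end{align*}
Integrability of $\mu_\star(\Lambda_n;f)$ in $L^2$ comes from the moment bounds behind Lemmas~\ref{lem:surface-envelope} and \ref{lem:EC-envelope}, since $k'\ge 2$ and $k-1\ge 2$.

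The next step localises this covariance. Because $h$ is supported in $B=\Lambda_M$, the variable $W(h)$, and hence $\mathcal E_h$, is measurable with respect to the blocks $X_v$ with $B_v\subset B$. There are $|B|$ such blocks, call this set $V_B$. Write $\mu_n=\mu_\star(\Lambda_n;f)$ as a local sum $\sum_w\xi_w$, using $\xi_w=\mu_{\operatorname{SA}}(B_w)$ for surface area and \eqref{eq:EC-cube-score} for the Euler characteristic. Then
\begin{align*}
    \operatorname{Cov}(\mathcal E_h,\mu_n)=\operatorname{Cov}\bigl(\mathcal E_h,\mathbb E[\mu_n\mid X_{V_B}]\bigr).
\end{align*}
Enumerate $V_B$ and use the martingale (Efron--Stein) decomposition. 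The conditional expectation $\mathbb E[\mu_n\mid X_{V_B}]-\mathbb E\mu_n$ is the sum over $v\in V_B$ of martingale increments $e_v$. Each increment satisfies $e_v=\mathbb E[D_v\mu_n\mid\cdots]$, exactly as in the proof of Lemma~\ref{lem:uniform-low-moment}. Consequently
\begin{align*}
    \operatorname{Cov}(\mathcal E_h,\mu_n)=\sum_{v\in V_B}\mathbb E\bigl[\mathcal E_h\,e_v^{(n)}\bigr].
\end{align*}

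Existence of the limit in \eqref{eq:fixed-direction-response} will follow once each $e_v^{(n)}$ converges in $L^2$ as $n\to\infty$. For this I would show that $D_v\mu_n=\sum_{w\in I_n}D_v\xi_w$ is Cauchy in $L^2$. The envelopes of Lemma~\ref{lem:surface-envelope} (with $r=2$) and Lemma~\ref{lem:EC-envelope} (with $r=2$) give $\|D_v\xi_w\|_2\le CR(v,w)^{-\theta/2}$. The decay conditions \eqref{eq:SA-positive-decay} and \eqref{eq:EC-positive-decay} should make $\theta/2>d$, so the tail $\sum_{|w|>n}\|D_v\xi_w\|_2$ tends to $0$. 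Conditional Jensen then passes this convergence to $e_v^{(n)}$, so the limit $R_\star(h)$ exists and is finite. There is one delicate point for the Euler characteristic. There $\xi_w$ depends on $n$ through $\Gamma_{n,w}$, but only for $w$ on the boundary faces, which are far from $v$. I would handle this by showing that the corresponding boundary contributions vanish in $L^2$ by the same decay estimate.

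For the variance bound I would apply Cauchy--Schwarz:
\begin{align*}
    |\operatorname{Cov}(\mathcal E_h,\mu_n)|^2\le\operatorname{Var}(\mathcal E_h)\,\bigl\|\mathbb E[\mu_n\mid X_{V_B}]-\mathbb E\mu_n\bigr\|_2^2,
\end{align*}
where $\operatorname{Var}(\mathcal E_h)=e^{\|h\|_2^2}-1\le e^{\|h\|_2^2}$. The main obstacle is comparing the second factor with $\sigma_\star^2(\ell)$ and producing the factor $|B|$. My approach uses stationarity. Partition $\Lambda_n$ into roughly $(2n)^d/|B|$ disjoint translates $B+z$ lying on the lattice. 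The projections of $\mu_n$ onto the independent block families $X_{V_{B+z}}$ are orthogonal in $L^2$, so
\begin{align*}
    \operatorname{Var}\mu_n\ge\sum_z\bigl\|\mathbb E[\mu_n\mid X_{V_{B+z}}]-\mathbb E\mu_n\bigr\|_2^2.
\end{align*}
For translates $B+z$ far from $\partial\Lambda_n$, the $L^2$ convergence from the previous step together with stationarity shows that each summand is asymptotically equal to $\|\mathbb E[\mu_n\mid X_{V_B}]-\mathbb E\mu_n\|_2^2$, up to $o(1)$. Dividing by $(2n)^d$ and letting $n\to\infty$ then gives
\begin{align*}
    \limsup_{n\to\infty}\bigl\|\mathbb E[\mu_n\mid X_{V_B}]-\mathbb E\mu_n\bigr\|_2^2\le|B|\,\sigma_\star^2(\ell).
\end{align*}
Combining this with the Cauchy--Schwarz step yields $|R_\star(h)|^2\le|B|e^{\|h\|_2^2}\sigma_\star^2(\ell)$. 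The final claim, that $R_\star(h)\ne0$ forces $\sigma_\star^2(\ell)>0$, is then immediate.
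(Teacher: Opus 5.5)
Your proof is correct, but it differs from the paper's in both main steps.

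\textbf{The paper's route.} The paper resamples all of $B$ at once and shows $\Delta_{B,n}=\mu_\star(\Lambda_n;f)-\mu_\star(\Lambda_n;f^B)\to\Delta_B$ in $L^2$ by citing conditions (2)--(3) of McAuley's Theorem~2.2. It then writes $R_\star(h)=\mathbb E[\Gamma_h\Delta_B]$. Next it obtains the identity $\mathbb E[\mathbb E(\Delta_B\mid\mathcal G_B)^2]=|B|\sigma_\star^2(\ell)$ from McAuley's martingale variance formula, applied to the rescaled field $\widehat f(x)=f(2Mx-b)$, whose unit blocks are the translates of $B$. Cauchy--Schwarz against the ``past'' $\sigma$-field $\mathcal G_B\supseteq\sigma(\Gamma_h)$ finishes the argument.

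\textbf{Your route.} You prove $L^2$ convergence of $D_v\mu_\star(\Lambda_n)$ directly from the paper's envelope lemmas at $r=2$. Your ``should'' is justified: the exponents $\beta(k'-2)/(2(3k'-1))$ and $\beta(k-3)/(2(k-1))$ exceed $3d/2$ under \eqref{eq:SA-positive-decay} and \eqref{eq:EC-positive-decay} respectively. You then replace the exact variance identity by Bessel's inequality for the mutually orthogonal subspaces $L^2_0(\sigma(X_{V_{B+z}}))$ over disjoint translates. Conditioning only on the block $X_{V_B}$, not on a whole past, is what makes the projections orthogonal, so a one-sided bound suffices. Since $\mathbb E[\Delta_{B,n}\mid X_{V_B}]=\mathbb E[\mu_n\mid X_{V_B}]-\mathbb E\mu_n$ and $\sigma(X_{V_B})\subseteq\mathcal G_B$, the quantity you control is dominated by the paper's.

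\textbf{What each buys.} Your route is self-contained. It needs no rescaling of the field, no scaling identities \eqref{eq:response-functional-scaling}, and no use of the internal structure of McAuley's martingale CLT. It uses only the stated envelopes, stationarity, and $\operatorname{Var}\mu_\star(\Lambda_n)/(2n)^d\to\sigma_\star^2(\ell)$. The paper's route is shorter given the citation and yields an identity rather than an inequality.

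\textbf{One step to write out.} Convergence along centred boxes is not quite enough for your claim that translates far from $\partial\Lambda_n$ contribute asymptotically the same amount. You need $\|D_v\mu_\star(A)-D_v\mu_\infty\|_2\to0$ uniformly over boxes $A$ as $\operatorname{dist}(v,\mathbb Z^d\setminus A)\to\infty$. Here $D_v\mu_\infty=\sum_{w\in\mathbb Z^d}D_v\xi_{\infty,w}$, and for EC, $\xi_{\infty,w}$ uses all $d$ upper faces. This follows from the same summable envelopes, because the tail bound depends only on that distance and $Y_v^{\operatorname{EC}}(w)$ does not depend on $n$. The martingale increments inherit the bound by conditional Jensen.
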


\begin{proof}
    Resample all white noise in $B$ independently, obtaining $f^B$, and define
    \begin{align*}
        \Delta_{B,n}=\mu_\star(\Lambda_n;f)-\mu_\star(\Lambda_n;f^B).
    \end{align*}

    Enumerate the unit-block indices in $B=\Lambda_M$ in increasing lexicographic order:
    \begin{align*}
        I_M=\{-M,\ldots,M-1\}^d=\{u_1,\ldots,u_N\}\qquad u_1\prec\cdots\prec u_N,\qquad N=(2M)^d.
    \end{align*}

    For $0\le j\le N$, let $f_j$ be the field obtained from $f$ by replacing $X_{u_i}$ with $X'_{u_i}$ for $1\le i\le j$, using the same independent copies as in the construction of $f^B$.
    Then $f_0=f$, $f_N=f^B$, and
    \begin{align*}
        \Delta_{B,n}&=\sum_{j=1}^N
        \left[\mu_\star(\Lambda_n;f_{j-1})-\mu_\star(\Lambda_n;f_j)\right]
        =:\sum_{j=1}^N\Delta_{j,n}.
    \end{align*}

    By independence of the white-noise blocks, for every $1\le j\le N$,
    \begin{align*}
        (f_{j-1},f_j)\overset{d}{=}(f,f^{(u_j)}).
    \end{align*}
    
    Thus conditions (2) and (3) of \citet[Theorem~2.2]{McAuley2026a}, verified in the proof of \citet[Theorem~1.4]{McAuley2026a}, imply that each $\Delta_{j,n}$ converges in $L^2$.
    As $N$ is fixed, we obtain
    \begin{equation}\label{eq:pure-block-L2}
        \Delta_{B,n}\longrightarrow\Delta_B\quad\text{in }L^2,\qquad
        \mathbb E\Delta_B=\lim_{n\to\infty}\mathbb E\Delta_{B,n}=0.
    \end{equation}

    The last equality follows from $f\overset{d}{=}f^B$.

    Define
    \begin{align*}
        \Gamma_h:=\exp\left\{W(h)-\frac12\|h\|_2^2\right\}.
    \end{align*}

    Then $\Gamma_h\in L^2$ and $\mathbb E\Gamma_h=1$.
    Recall that $W$ can be viewed as a centered Gaussian random element in some suitable Hilbert space, 
    with Cameron--Martin space canonically identified with $H=L^2(\mathbb R^d)$.
    Here we consider translation 
    \begin{align*}
        W\mapsto W+h,\quad f\mapsto f+\rho,
    \end{align*}
    where $(W+h)(\phi)=W(\phi)+\langle h,\phi\rangle_{H}, \phi\in H$.

    We use the block-measurable geometric versions in Remark~\ref{rem:field-measurability}.
    Let $\nu=\mathcal L(W)$ and $\nu_h=\mathcal L(W+h)$.
    The white-noise Cameron--Martin formula gives
    \begin{align*}
        \frac{d\nu_h}{d\nu}(W)=\exp\left\{W(h)-\frac12\|h\|_2^2\right\}=\Gamma_h.
    \end{align*}

    The Cameron--Martin formula gives
    \begin{align}\label{C--M formula}
        \mathbb E[\Gamma_h\Delta_{B,n}]
        &=\mathbb{E}[\Gamma_h\left(\mu_\star(\Lambda_n;f)-\mu_\star(\Lambda_n;f^B)\right)]\notag\\
        &=\mathbb E\bigl[\mu_\star(\Lambda_n;f+\rho)-\mu_\star(\Lambda_n;f^B)\bigr]\notag\\
        &=\mathbb E\mu_\star(\Lambda_n;f+\rho)-\mathbb E\mu_\star(\Lambda_n;f).
    \end{align}

    For the Cameron--Martin formula and its application to white noise, for example, see \citet[Chapter~4]{HairerSPDE2025}.
    
    The expectations are finite by Cauchy--Schwarz inequality, since $\mu_\star(\Lambda_n;f)$ and $\Gamma_h$ belong to $L^2$.
    By \eqref{eq:pure-block-L2} and \eqref{C--M formula},
    \begin{align*}
        \bigl|\mathbb E[\Gamma_h(\Delta_{B,n}-\Delta_B)]\bigr|
        \leq\|\Gamma_h\|_2\|\Delta_{B,n}-\Delta_B\|_2\longrightarrow0,
    \end{align*}
    thus
    \begin{align*}
        R_\star(h)=\mathbb E[\Gamma_h\Delta_B]<\infty.
    \end{align*}

    Denote $c=2M$ and $b=(M,\ldots,M)$. Recall $B=\Lambda_M$,
    \begin{align*}
        \widehat f(x)=f(cx-b),\qquad\mathcal G_B=\sigma\bigl(W|_{cv+B}:v\preceq0\bigr),
    \end{align*}
    then by the proof of \citet[Theorem~2.2]{McAuley2026a}
    \begin{equation}\label{eq:response-rescaled-variance}
        \lim_{n\to\infty}
        \frac{\operatorname{Var}\mu_\star(\Lambda_n;\widehat f)}{(2n)^d}
        =\mathbb E\bigl[\mathbb E(\widehat\Delta_{\mathbf{0}}\mid\mathcal G_B)^2\bigr],
    \end{equation}
    where $\widehat\Delta_{\mathbf{0}}$ is the limiting resampling difference for $\widehat f$ on $B_{\mathbf{0}}$.
    
    Similar to the proof of \citet[Lemma~5.2]{McAuley2026a}, for the scaling field,
    \begin{equation}\label{eq:response-functional-scaling}
        \begin{aligned}
            \mu_{\operatorname{SA}}(\Lambda_n;\widehat f)
            &=c^{1-d}\mu_{\operatorname{SA}}(c\Lambda_n-b;f),\\
            \mu_{\operatorname{EC}}(\Lambda_n;\widehat f)
            &=\mu_{\operatorname{EC}}(c\Lambda_n-b;f).
        \end{aligned}
    \end{equation}

    For the resampled field $f^B$, the same scaling holds. 
    Applying \eqref{eq:response-functional-scaling} to the resampling differences yields
    \begin{equation}\label{eq:response-difference-scaling}
        \widehat\Delta_{\mathbf{0}}=
        \begin{cases}
            c^{1-d}\Delta_B,&\star=\operatorname{SA},\\
            \Delta_B,&\star=\operatorname{EC}.
        \end{cases}
    \end{equation}

    Combining \eqref{eq:response-rescaled-variance}, \eqref{eq:response-functional-scaling} and \eqref{eq:response-difference-scaling},
    stationarity of $\mu_\star$ then gives
    \begin{equation}\label{eq:response-block-variance}
        \mathbb E\bigl[\mathbb E(\Delta_B\mid\mathcal G_B)^2\bigr]
        =\lim_{n\to\infty}
          \frac{\operatorname{Var}
            \mu_\star(c\Lambda_n-b;f)}{(2n)^d}
            =\lim_{n\to\infty}
          \frac{\operatorname{Var}
            \mu_\star(\Lambda_{cn};f)}{(2n)^d}
        =c^d\sigma_\star^2(\ell).
    \end{equation}

    As $h$ is supported in $B$, $\Gamma_h$ is
    $\mathcal G_B$-measurable. By \eqref{eq:response-block-variance}
    and Cauchy--Schwarz,
    \begin{align}\label{eq:response-conditional-CS}
        |R_\star(h)|^2
        &=\left|\mathbb E\bigl[\Gamma_h\mathbb E(\Delta_B\mid\mathcal G_B)\bigr]\right|^2\notag\\
        &\leq\|\Gamma_h\|_2^2\mathbb E\bigl[\mathbb E(\Delta_B\mid\mathcal G_B)^2\bigr]\notag\\
        &=|B|e^{\|h\|_2^2}\sigma_\star^2(\ell).
    \end{align}

    The coefficient is finite, so $R_\star(h)\ne0$ implies $\sigma_\star^2(\ell)>0$.
\end{proof}

\subsection{Finite-range unstable set}\label{subsec:FR-reduction}

For a deterministic field $g$, the notation $\{g\geq\ell\}_\infty$ means the union of all its unbounded excursion components, 
while 
\begin{align*}
    \{g\geq \ell\}_{<\infty}=\{g\geq \ell\}\textbackslash\{g\geq\ell\}_\infty.
\end{align*}

\begin{lemma}\label{lem:cubical-excursion-components}
    Let $g\in C^1(\mathbb R^d)$ and $\ell\in\mathbb R$.
    If $\ell$ is a regular value of $g$ on every cubical
    stratum of $B_{\mathbf{0}}=[0,1]^d$, then
    $\{g\geq\ell\}\cap B_{\mathbf{0}}$ has finitely many connected
    components.
\end{lemma}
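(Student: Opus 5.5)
The plan is to show that $E:=\{g\geq\ell\}\cap B_{\mathbf 0}$ is a compact topological manifold with boundary (or at least a compact, locally connected set), and then use compactness to conclude that there are finitely many components. The key point is that a compact, locally connected space has finitely many components. Each component is then open in $E$, so the components form an open cover of $E$ by pairwise disjoint sets, and compactness forces this cover to be finite. It therefore suffices to show that $E$ is locally connected at every point $x\in E$.

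I would split into cases according to the position of $x$. If $g(x)>\ell$, continuity of $g$ gives a neighbourhood $U$ of $x$ with $U\cap B_{\mathbf 0}\subset E$. Since $B_{\mathbf 0}$ is convex, $x$ has arbitrarily small convex (hence connected) neighbourhoods in $B_{\mathbf 0}$, and these lie in $E$. If $g(x)=\ell$, let $S$ be the unique open stratum of $B_{\mathbf 0}$ containing $x$; this is an open face of some dimension $m\in\{0,\ldots,d\}$. For $m=0$ (a vertex), the hypothesis that $\ell$ is a regular value on the vertex stratum means that no vertex satisfies $g=\ell$, so this case cannot occur. For $m\geq1$, regularity gives $\nabla_S g(x)\neq\mathbf 0$. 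I would choose local coordinates near $x$ in which $B_{\mathbf 0}$ is $\mathbb R^m\times[0,\infty)^{d-m}$, with $x$ at the origin; this is possible because $x$ lies in the relative interior of $S$. Since the tangential gradient is nonzero, after a rotation inside $S$ one has $\partial_1 g(x)\neq0$, with $x_1$ a coordinate along $S$. The implicit function theorem then shows that near $x$ the set $\{g\geq\ell\}$ equals $\{x_1\geq\phi(x_2,\ldots,x_d)\}$, or the reverse inequality, for some $C^1$ function $\phi$.

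Intersecting with a small box $(-\epsilon,\epsilon)\times V\times[0,\epsilon)^{d-m}$, where $V$ is a small ball in the remaining $S$-directions, gives a set of the form $\{(x_1,y):y\in V',\ x_1\in[\phi(y),\epsilon)\}$, where $V'=V\times[0,\epsilon)^{d-m}$. The base $V'$ is convex, and each fiber in the $x_1$-direction is a nonempty interval once $\epsilon$ is small relative to $\sup_{V'}|\phi|$. Such a set is connected, being a union of intervals over a connected base that contains the graph of $\phi$. This gives arbitrarily small connected neighbourhoods of $x$ in $E$, so $E$ is locally connected.

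The main obstacle is exactly this boundary case. The implicit function theorem has to be applied in the tangential direction $x_1$ of the stratum $S$ while allowing the transverse directions to range over the closed orthant. This needs $g$ to be $C^1$ on a full neighbourhood in $\mathbb R^d$, which holds because $g\in C^1(\mathbb R^d)$. One must also check that the fibers stay inside the box, which follows by taking $V'$ small so that $|\phi|<\epsilon/2$ by continuity and $\phi(0)=0$. Once local connectedness holds at every point, the compactness argument above finishes the proof.
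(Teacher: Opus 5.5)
Your proposal is correct and follows essentially the same route as the paper. Both show that every point of $\{g\geq\ell\}\cap B_{\mathbf 0}$ has a connected relatively open neighbourhood and then conclude by compactness: the case $g>\ell$ is trivial, vertices are excluded by regularity, and where $g=\ell$ one uses a $C^1$ chart along a tangential direction of the stratum. The paper straightens $g-\ell$ into a coordinate with the inverse function theorem so that the local piece becomes convex, while your implicit-function and fibre argument is an equivalent variant. One small tidy-up: the transverse extent of your box should be a separate $\delta$, chosen together with $V$ to be small relative to $\epsilon$, rather than $\epsilon$ itself. Only then does $|\phi|<\epsilon/2$ hold on all of $V'$, so that every fibre $[\phi(y),\epsilon)$ is nonempty.
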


\begin{proof}
    Denote $K=\{g\geq\ell\}\cap B_{\mathbf{0}}$.
    We show that every point of $K$ has a connected relatively open neighbourhood in $K$.

    Fix $x\in K$. If $g(x)>\ell$, choose a sufficiently small open ball $O_x$ on which $g>\ell$.
    Then $K\cap O_x=B_{\mathbf{0}}\cap O_x$ is convex.

    Suppose that $g(x)=\ell$. 
    Regularity on the stratum containing $x$ gives an index $j\in[d]$ such that
    \begin{align*}
        0<x_j<1,\qquad \partial_jg(x)\ne0.
    \end{align*}

    By the inverse function theorem, the map
    \[
        \Phi(y)
        =
        \bigl(y_1,\ldots,y_{j-1},
              g(y)-\ell,y_{j+1},\ldots,y_d\bigr)
    \]
    is a $C^1$ diffeomorphism on a neighbourhood $U$ of $x$.
    Shrinking $U$ if necessary, we may assume that $0<y_j<1$ throughout $U$. 
    Choose an open box $V$ containing $\Phi(x)$ with $V\subset\Phi(U)$, 
    and set $O_x=(\Phi|_U)^{-1}(V)$. 

    For $z\in V$, let $y=(\Phi|_U)^{-1}(z)\in O_x$.
    Then $z_i=y_i$ for $i\ne j$ and $z_j=g(y)-\ell$.
    Since $0<y_j<1$ throughout $U$,
    \begin{align*}
        y\in K\quad\Longleftrightarrow\quad z_i\in[0,1]\ \text{for all }i\ne j,\quad z_j\ge0.
    \end{align*}

    Then
    \begin{align*}
        \Phi(K\cap O_x)=V\cap\bigl([0,1]^{j-1}\times[0,\infty)\times[0,1]^{d-j}\bigr),
    \end{align*}
    which is convex. Hence $K\cap O_x$ is connected.

    As $g$ is continuous, $K$ is compact, 
    thus finitely many of these relatively open connected sets cover $K$. 
    Each of these sets lies in a single connected component of $K$, 
    so $K$ has only finitely many connected components.
\end{proof}

The next lemma shows that, for a nonnegative perturbation $p$,
we can simplify the expression of the finite-range unstable set $\mathcal U_{\mathrm{FR}}(g,p)$.
Recall its definition in \citet[Section~3]{McAuley2026a}, also see Subsection~\ref{subsec:proof-geometric}.

\begin{lemma}\label{lem:FR-nonnegative}
    Let $g,p\in C^2(\mathbb R^d)$ with $p\geq0$, and fix $\ell\in\mathbb R$. 
    Then
    \begin{equation}\label{u_FR identity}
        \mathcal U_{\mathrm{FR}}(g,p)
        =\mathcal U_{\mathrm{Loc}}(g,p)
        \cup\bigcup_{w\in\mathcal U_{\mathrm{Loc}}(g,p)}
        \left\{v\in\mathbb Z^d:B_v\overset{\{g\geq\ell\}_{<\infty}}{\longleftrightarrow}B_w\right\}.
    \end{equation}
\end{lemma}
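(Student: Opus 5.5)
The definition \eqref{eq:unstable-sets} of $\mathcal U_{\mathrm{FR}}(g,p)$ differs from the right-hand side of \eqref{u_FR identity} only by the extra union over connections through $\{g+p\geq\ell\}_{<\infty}$. The inclusion ``$\supseteq$'' is immediate. So the plan is to show that if $w\in\mathcal U_{\mathrm{Loc}}(g,p)$ and $B_v\overset{\{g+p\geq\ell\}_{<\infty}}{\longleftrightarrow}B_w$, then either $v\in\mathcal U_{\mathrm{Loc}}(g,p)$ or $B_v$ is connected to some $B_{w'}$ with $w'\in\mathcal U_{\mathrm{Loc}}(g,p)$ inside $\{g\geq\ell\}_{<\infty}$. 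The natural candidate is $w'=w$ itself, or a box met along the connecting path where a local instability occurs.

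The key monotonicity fact from $p\geq0$ is $\{g\geq\ell\}\subseteq\{g+p\geq\ell\}$. Hence every component of $\{g\geq\ell\}$ lies inside a component of $\{g+p\geq\ell\}$. If that component of $\{g+p\geq\ell\}$ is bounded, then so is the smaller component. Unbounded components of $\{g\geq\ell\}$ sit inside unbounded components of $\{g+p\geq\ell\}$.

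Fix a curve $\gamma$ in a bounded component $C$ of $\{g+p\geq\ell\}$ joining $B_w$ to $B_v$. I would track, box by box along $\gamma$, the boxes $B_u$ it meets, and consider the cubical pieces $C\cap B_u$. I then distinguish two cases.

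\emph{Case 1: $u\notin\mathcal U_{\mathrm{Loc}}(g,p)$.} Then on every stratum of $B_u$, $\ell$ is a regular value of $g+tp$ for all $t\in[0,1]$. By a Morse-theoretic isotopy, each component of $\{g+tp\geq\ell\}\cap B_u$ deforms continuously in $t$, with no births, deaths or mergers, as in \citet[Lemma~3.3]{McAuley2026a}. Lemma~\ref{lem:cubical-excursion-components} guarantees finitely many components, so this bookkeeping is legitimate. Consequently $\gamma\cap B_u$ can be replaced by a curve in $\{g\geq\ell\}\cap B_u$ joining the corresponding faces: pieces of $C\cap B_u$ at $t=1$ correspond bijectively, compatibly with the shared faces, to pieces at $t=0$.

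\emph{Case 2: $u\in\mathcal U_{\mathrm{Loc}}(g,p)$.} Then $B_u$ is itself a legitimate endpoint.

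Now take the last box $B_{u^\ast}$ along $\gamma$, walking from $B_w$ toward $B_v$, with $u^\ast\in\mathcal U_{\mathrm{Loc}}(g,p)$; this box exists since $w$ qualifies. If $u^\ast=v$ we are done. Otherwise every box strictly after it is locally stable, and the Case 1 replacement gives a path in $\{g\geq\ell\}$ from $B_{u^\ast}$ to $B_v$. That path lies in $C$, which is bounded, so its $\{g\geq\ell\}$-component is bounded by the monotonicity above. Hence $B_v\overset{\{g\geq\ell\}_{<\infty}}{\longleftrightarrow}B_{u^\ast}$ with $u^\ast\in\mathcal U_{\mathrm{Loc}}(g,p)$, proving ``$\subseteq$''.

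The main obstacle is the Case 1 replacement. Stability must be shown to give consistent gluing of the $t=0$ and $t=1$ pieces across adjacent stable boxes, including along shared faces, and to handle the transition into $B_{u^\ast}$ at the boundary face, which is an unstable box. I would first try to build the stratified isotopy on the union of consecutive stable boxes, with the face strata of the shared boundaries included, using the implicit function theorem and regularity of $(t,x)\mapsto g+tp$. I would then argue that the endpoint of the $t=1$ path on $\partial B_{u^\ast}$ is carried to a point of $\{g\geq\ell\}\cap B_{u^\ast}$. This uses $p\geq0$: the transported point satisfies $g\geq\ell$ on the face of $B_{u^\ast}$.
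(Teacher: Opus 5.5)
Your route works, and it is organised differently from the paper's. The paper argues by contradiction from the side of $v$. Assuming $v\notin A$, it forms the region $D^+$ of all cubes joined to $B_v$ through bounded components of $\{g\geq\ell\}$. It uses Lemma~\ref{lem:cubical-excursion-components} and compactness to see that $D^+$ is a finite union of cubes, all necessarily locally stable, and applies the stratified isotopy of \citet[Lemma~3.1]{McAuley2026a} on $D^+$. It then checks that the pulled-back starting point is not on $\partial D^+$, and shows that the first exit point from $D^+$ of the pulled-back curve would have to be interior to $D^+$.

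You work forwards along $\gamma$ instead. Let $t^*$ be the last time $\gamma$ meets an unstable cube $B_{u^*}$. The finite union $S$ of stable cubes met by $\gamma([t^*,1])$ already carries that isotopy, and $H_1^{-1}\circ\gamma|_{[t^*,1]}$ is directly a path in $\{g\geq\ell\}$ from $B_{u^*}$ to $B_v$. Its endpoints stay in $B_{u^*}$ and $B_v$ because every cubical stratum is preserved, which is the same property the paper uses to get $H_t(B_v)=B_v$. This avoids the finiteness argument for $D^+$, the $x_0\notin\partial D^+$ step and the contradiction, and it yields an explicit unstable witness $u^*$. Your box-by-box gluing in Case~1 is unnecessary: Lemma~3.1 of McAuley applied to $S$ is exactly the ``main obstacle'' you describe.

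One step needs a line you have not supplied. The claim ``that path lies in $C$'' does not follow from $\{g\geq\ell\}\subseteq\{g+p\geq\ell\}$ alone; that inclusion only puts the path in \emph{some} component of $\{g+p\geq\ell\}$. As in the paper, take $x=H_1^{-1}(\gamma(1))$. The trajectory $t\mapsto H_t(x)$ lies in $\{g+tp\geq\ell\}\subseteq\{g+p\geq\ell\}$ because $p\geq0$, and it ends at $\gamma(1)\in C$. So $x\in C$, and the whole connected pulled-back path lies in the bounded set $C$. This is where nonnegativity is really used. Your closing remark misplaces it: the transported point on $\partial B_{u^*}$ satisfies $g\geq\ell$ simply because $H_1^{-1}(\{g+p\geq\ell\}\cap S)=\{g\geq\ell\}\cap S$.
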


\begin{proof}
    Let $A$ denote the right-hand side of \eqref{u_FR identity}.
    By definition, $A\subseteq\mathcal U_{\mathrm{FR}}(g,p)$.
    To prove the reverse inclusion, fix $v\notin A$.

    We first construct a finite region on which the perturbation is locally stable. 
    Define
    \begin{align*}
        D^+:=B_v\cup\bigcup_{u\in\mathbb Z^d:B_v\overset{\{g\geq\ell\}_{<\infty}}{\longleftrightarrow}B_u}B_u.
    \end{align*}

    Since $v\notin\mathcal U_{\mathrm{Loc}}(g,p)$,
    $\ell$ is a regular value of $g$ on every cubical
    stratum of $B_v$.
    Applying Lemma~\ref{lem:cubical-excursion-components}
    to $g(v+\cdot)$ shows that
    $\{g\geq\ell\}\cap B_v$ has finitely many connected
    components.

    Every full-space component of $\{g\geq\ell\}$ intersecting $B_v$ contains a component of $\{g\geq\ell\}\cap B_v$.
    Thus only finitely many bounded full-space components meet $B_v$,
    each is compact and intersects only finitely many unit cubes, 
    thus $D^+$ is a finite union of cubes. 
    Every cube in this union is locally stable; otherwise, $v$ would belong to the second part of $A$.

    By \citet[Lemma~3.1]{McAuley2026a}, there exists a stratified isotopy $H_t$ of $D^+$, 
    with $H_0$ equal to the identity, such that
    \begin{align*}
        H_t\bigl(\{g\geq\ell\}\cap D^+\bigr)=\{g+tp\geq\ell\}\cap D^+,\qquad 0\leq t\leq1.
    \end{align*}

    Since the isotopy preserves every cubical stratum,
    \begin{align*}
        H_t(B_v)=B_v,\qquad H_t(\partial D^+)=\partial D^+.
    \end{align*}

    We want to show that $v\notin\mathcal U_{\mathrm{FR}}(g,p)$.
    Suppose, for a contradiction, that there exist $w\in\mathcal U_{\mathrm{Loc}}(g,p)$ and a continuous curve
    \begin{align*}
        \gamma:[0,1]\longrightarrow\{g+p\geq\ell\}_{<\infty},\qquad\gamma(0)\in B_v,\quad \gamma(1)\in B_w.
    \end{align*}
    
    Denote $C$ be the bounded component of $\{g+p\geq\ell\}$ containing $\gamma$,
    \begin{align*}
        y_0=\gamma(0)\in C\cap B_v,\quad x_0=H_1^{-1}(y_0)\in\{g\geq\ell\}\cap B_v.
    \end{align*}

    By the isotopy identity and the nonnegativity of $p$,
    \begin{align*}
        H_t(x_0)\in\{g+tp\geq\ell\}\subseteq\{g+p\geq\ell\},
        \qquad 0\leq t\leq1.
    \end{align*}

    Since $H_0(x_0)=x_0$ and $H_1(x_0)=y_0\in C$, this path shows that $x_0\in C$. 
    Finally, $\{g\geq\ell\}\subseteq\{g+p\geq\ell\}$,
    so the original excursion component of $\{g\geq\ell\}$ containing $x_0$ is bounded, which we denote as $C_0$.

    Next we show that $x_0\notin\partial D^+$. 
    If not, there exists another cube $B_{u_0}$ containing $x_0$, but not in $D^+$.
    But we can construct a closed curve  containing $x_0$ in $\{g\geq\ell\}$ connecting $B_v$ and $B_{u_0}$,
    which leads to contradiction.
    As $H_1$ preserves $\partial D^+$,
    $y_0\in\operatorname{int}D^+$ as well.

    By definition of $D^+$ and $B_w$ is unstable,
    $B_w\cap\operatorname{int}D^+=\varnothing$.
    Hence $\gamma$ reaches $\partial D^+$.
    Let
    \begin{align*}
        t_0:=\min\{t\in[0,1]:\gamma(t)\in\partial D^+\}>0.
    \end{align*}

    Then $\gamma([0,t_0])\subseteq D^+, \gamma(t_0)\in\partial D^+$.
    As the isotopy $H_t$ preserves every strata, the pullback of $\gamma$
    \begin{align*}
        \eta(s):=H_1^{-1}(\gamma(s)),\qquad 0\leq s\leq t_0,
    \end{align*}
    lies in $\{g\geq\ell\}\cap D^+$, starts at $x_0$, and ends on $\partial D^+$.
    It therefore lies in the bounded component $C_0$.

    For every closed unit cube $B_u$ containing $\eta(t_0)$,
    the path $\eta$ gives
    \begin{align*}
        B_v\overset{\{g\geq\ell\}_{<\infty}}{\longleftrightarrow}B_u.
    \end{align*}

    By definition, all such cubes belong to $D^+$.
    Their union contains a neighbourhood of $\eta(t_0)$, contradicting $\eta(t_0)\in\partial D^+$.

    Thus $B_v$ cannot be connected to a locally unstable cube through a bounded component of $\{g+p\geq\ell\}$. 
    Together with $v\notin A$,
    this gives $v\notin\mathcal U_{\mathrm{FR}}(g,p)$ and proves the reverse inclusion.
\end{proof}

Lemma~\ref{lem:FR-nonnegative} removes the third term
\begin{align*}
    \bigcup_{v \in \mathcal{U}_{\mathrm{Loc}}(f,p)}
    \left\{w\in \mathbb{Z}^d:B_w\overset{\{f+p\ge\ell\}_{<\infty}}{\longleftrightarrow}B_v\right\},
\end{align*} 
in the expression of $\mathcal U_{\mathrm{FR}}(g,p)$.
Estimating the probability that $w$ lies in this set may require the additional condition $\ell<-\ell_c$; 
see \citet[Proposition~3.4 and Lemma~3.5]{McAuley2026a}. 

\subsection{Local estimates under chosen perturbation}\label{subsec:FR-perturbations}

Throughout this subsection, $C$ denotes a finite positive constant whose value may not be the same.

We now construct a nonnegative perturbation.
Assume $q\geq0$ and let
\begin{align*}
    a_q=\int_{\mathbb R^d}q(y)\,dy\in(0,\infty).
\end{align*}

Choose smooth cutoff functions $\xi_m\in C_c^\infty(\mathbb R^d)$,
$m\in\mathbb N$, such that
\begin{align}\label{property of xi}
    0\le\xi_m\le1,\qquad \xi_m=1\text{ on }\Lambda_m,\qquad \operatorname{supp}\xi_m\subseteq\Lambda_{m+1},\qquad \sup_{m\ge1}\|\partial^\alpha\xi_m\|_\infty<\infty
\end{align}
for every multi-index $\alpha$.
For a fixed $u>0$, specify the perturbation as
\begin{equation}\label{eq:smooth-white-noise-shift}
    h_m=\frac{u}{a_q}\xi_m,\qquad
    \rho_m(x)=\frac{u}{a_q}
       \int_{\mathbb R^d}q(y)\xi_m(x-y)\,dy.
\end{equation}

Then by \eqref{property of xi} we have
\begin{align}\label{eq:smooth-shift-regularity}
    0\leq\rho_m\leq u,\qquad\partial^\alpha\rho_m(x)
    &=\frac{u}{a_q}\int_{\mathbb R^d}q(y)\partial^\alpha\xi_m(x-y)\,dy,\notag\\
    \|\partial^\alpha\rho_m\|_\infty
    &\leq \frac{u}{a_q}\|q\|_1\|\partial^\alpha\xi_m\|_\infty\leq C_\alpha u.
\end{align}

In particular, $\sup_m\|\rho_m\|_{C^k(\mathbb R^d)}\leq C_k u$.

If $x\in\Lambda_{m-R}$ and $y\in\Lambda_R$, then $\xi_m(x-y)=1$. 
Hence for $x\in\Lambda_{m-R}$,
\begin{align}\label{eq:smooth-shift-core}
    0\leq u-\rho_m(x)\leq\frac{u}{a_q}\int_{\mathbb R^d\setminus\Lambda_R}q(y)\,dy,
    \quad |\partial^\alpha\rho_m(x)|\leq C_\alpha u\int_{\mathbb R^d\setminus\Lambda_R}q(y)\,dy,\quad |\alpha|\geq1.
\end{align}

For $v\notin\Lambda_{m+1}$, define
\begin{align*}
    L_{m,v}=\operatorname{dist}_\infty(B_v,\Lambda_{m+1}):=\inf_{x\in B_v,y\in\Lambda_{m+1}}\Vert x-y\Vert_{\infty}.
\end{align*}

When $L_{m,v}$ is sufficiently large, differentiating the kernel and using (M3) in Assumption~\ref{ass:McAuley} yield
\begin{align}\label{eq:SA-outside-perturbation}
    \|\rho_m\|_{C^2(v+\Lambda_2)}
    &:=
    \max_{|\alpha|\le2}
    \sup_{x\in v+\Lambda_2}|\partial^\alpha\rho_m(x)|
    \notag\\
    &\le
    \frac{u}{a_q}
    \sum_{|\alpha|\le2}
    \sup_{x\in v+\Lambda_2}
    \int_{\Lambda_{m+1}}
        |\partial^\alpha q(x-z)|\,dz \notag\\
    &\le
    \frac{u}{a_q}
    \sum_{|\alpha|\le2}
    \int_{|y|_\infty\ge L_{m,v}-2}
        |\partial^\alpha q(y)|\,dy \notag\\
    &\le Cu L_{m,v}^{-(\beta-d)},
\end{align}

For the Euler characteristic functional, recall that in \eqref{eq:EC-cube-score} we denote
\begin{equation}\label{eq:EC-window-score}
    \xi_{n,w}(g)=\mu_{\operatorname{EC}}(B_w;g)-\mu_{\operatorname{EC}}(\Gamma_{n,w};g),\qquad w\in I_n.
\end{equation}

Also recall that $N_w(g)=N_{\operatorname{Crit}}(B_w,g)$ counts stratified critical points in $B_w$,
see \citet[Section~4]{McAuley2026a} or the proof of Lemma~\ref{lem:EC-envelope}.

We next record the moment estimates needed below.

Under Assumption~\ref{ass:McAuley}, assume $q\geq0$, fix $u>0$, and let $k'=k(k+1)/2-2$. 
\eqref{eq:smooth-shift-regularity} and \citet[Proposition~4.6]{McAuley2026a} (see also \citet{ArmentanoEtAl2023}) give
\begin{equation}\label{eq:SA-shift-moments}
    \sup_{m\geq1}\sup_{w\in\mathbb Z^d}\mathbb E\left[\mathcal H^{d-1}(B_w\cap\{f+\rho_m=\ell\})^{k'}\right]\leq C_u.
\end{equation}

Moreover, under Assumption~\ref{ass:McAuley}, \citet[Lemma~4.12, Proposition~4.14]{McAuley2026a} (see also \citet{GassStecconi2024}),
together with the argument in the proof of Lemma~\ref{lem:EC-envelope}, yield, for the field $f+\rho_m$,
\begin{align}\label{eq:EC-shift-moments}
    \sup_{m\geq1}\sup_{w\in\mathbb Z^d}\mathbb E N_w(f+\rho_m)^{k-1}\leq C_u,\quad
    \sup_{m,n\geq1}\sup_{w\in I_n}\mathbb E|\xi_{n,w}(f+\rho_m)|^{k-1}\leq C_u,
\end{align}
here we apply the \eqref{eq:smooth-shift-regularity} to ensure $C_u$ is uniform in $m$.

The same moment bounds hold with $f+\rho_m$ replaced by $f$ or $f+u$.

\begin{lemma}\label{lem:shift-local-estimates}
    Under Assumption~\ref{ass:McAuley}, assume $q\geq0$ and fix $u>0$.
    For every sufficiently small $\eta>0$, there is a finite constant $C_{u,\eta}$, independent of $m,n,w$, 
    such that, for all sufficiently large $L_{m,w}$,
    \begin{equation}\label{eq:FR-exterior-tail}
        \mathbb P(w\in\mathcal U_{\mathrm{FR}}(f,\rho_m))\leq C_{u,\eta}L_{m,w}^{-\beta+2d+\eta}.
    \end{equation}

    If \eqref{eq:SA-positive-decay} holds, there exists $\gamma_{\operatorname{SA}}>d$ and a finite constant $C_u$, independent of $m,w$, 
    such that
    \begin{equation}\label{eq:SA-summable-exterior}
        \mathbb E\left|\mu_{\operatorname{SA}}(B_w;f+\rho_m)-\mu_{\operatorname{SA}}(B_w;f)\right|
        \leq C_u L_{m,w}^{-\gamma_{\operatorname{SA}}}.
    \end{equation}

    If \eqref{eq:EC-positive-decay} holds, there exists $\gamma_{\operatorname{EC}}>d$ and a finite constant $C_u$, independent of $m,n,w$, 
    such that, uniformly over $n$ with $w\in I_n$,
    \begin{equation}\label{eq:EC-summable-exterior}
        \mathbb E\left|\xi_{n,w}(f+\rho_m)-\xi_{n,w}(f)\right|
        \leq C_u L_{m,w}^{-\gamma_{\operatorname{EC}}}.
    \end{equation}
\end{lemma}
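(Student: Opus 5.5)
The plan is to reduce all three estimates to a single control, on the event $w\notin\mathcal U_{\mathrm{FR}}(f,\rho_m)$, of the size of the perturbation near $B_w$. The key input is the decay bound \eqref{eq:SA-outside-perturbation}: for $L_{m,w}$ large, $\|\rho_m\|_{C^2(w+\Lambda_2)}\leq CuL_{m,w}^{-(\beta-d)}$. Since $\rho_m\geq0$, Lemma~\ref{lem:FR-nonnegative} lets us drop the term involving bounded components of $\{f+\rho_m\geq\ell\}$. It then suffices to bound
\[
\mathbb P(w\in\mathcal U_{\mathrm{Loc}}(f,\rho_m))+\sum_{v}\mathbb P\bigl(v\in\mathcal U_{\mathrm{Loc}}(f,\rho_m),\ B_w\overset{\{f\geq\ell\}_{<\infty}}{\longleftrightarrow}B_v\bigr).
\]

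\textbf{Proof of \eqref{eq:FR-exterior-tail}.} First I would bound the local instability probability for a cube $B_v$. I would use the Kac--Rice/Bulinskaya-type estimate that underlies \citet[Lemma~3.5]{McAuley2026a}. If a stratum of $B_v$ admits $t\in[0,1]$ and $x$ with $(f+t\rho_m)(x)=\ell$ and $\nabla_S(f+t\rho_m)(x)=0$, then $f$ has a near-critical point at level $\ell$. Quantitatively, the probability is at most $C_\eta\|\rho_m\|_{C^2(v+\Lambda_2)}^{1-\eta}$, by the standard bound on the probability that $(f-\ell,\nabla_Sf)$ is small on a compact set, using nondegeneracy from (M1).

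For cubes near $\Lambda_{m+1}$ this gives a bound of order $1$. For the rest it gives $CL_{m,v}^{-(\beta-d)(1-\eta)}$. To split the sum over $v$, use that either $|v-w|\geq L_{m,w}/2$, or else $L_{m,v}\geq L_{m,w}/2$. In the first case, the connection event forces a bounded component of $\{f\geq\ell\}$ of diameter at least $|v-w|/2$ touching $B_w$ or $B_v$. By (M5) this probability decays superpolynomially. The number of such $v$ at distance $r$ is polynomial, but cubes near $\Lambda_{m+1}$ may be unstable with probability of order $1$, so I would instead sum the (M5) tail over $v$ at distance at least $L_{m,w}/2$. This is still superpolynomially small.

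In the second case, apply Hölder's inequality to split the two events: the local instability term contributes $L_{m,w}^{-(\beta-d)(1-\eta)}$, while the (M5) tail in $|v-w|$ makes the sum over $v$ finite. The volume factor in the Hölder step costs at most another $L^{d}$. Combining the cases gives the exponent $-\beta+2d+\eta$.

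\textbf{Proof of \eqref{eq:SA-summable-exterior} and \eqref{eq:EC-summable-exterior}.} These repeat the envelope arguments in the proofs of Lemma~\ref{lem:surface-envelope} and Lemma~\ref{lem:EC-envelope}, with $p_v$ replaced by $\rho_m$.

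For the Euler characteristic, off $\mathcal U_{\mathrm{FR}}$ the stratified homeomorphism (\citet[Lemma~3.3]{McAuley2026a}) gives $\xi_{n,w}(f+\rho_m)=\xi_{n,w}(f)$. On the unstable event, Hölder's inequality with the uniform $(k-1)$-th moments \eqref{eq:EC-shift-moments} bounds the difference by
\[
C_u\,\mathbb P(w\in\mathcal U_{\mathrm{FR}})^{(k-2)/(k-1)}\leq C_uL_{m,w}^{-(\beta-2d-\eta)(k-2)/(k-1)}.
\]

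For surface area, the analogue of \citet[Lemma~4.8]{McAuley2026a} adds a geometric term of order $M_2^{(\gamma-\delta)/2}$, which is at most $L_{m,w}^{-(\beta-d)(\gamma-\delta)}$. We then cap with $\mathcal L(w)$ using \eqref{eq:SA-shift-moments}.

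It remains to check that \eqref{eq:SA-positive-decay} and \eqref{eq:EC-positive-decay} force the resulting exponents above $d$. For Euler characteristic this requires $(\beta-2d)(k-2)/(k-1)>d$. This is implied by $\beta>3d(k-1)/(k-3)$ after a short computation, and the surface-area case is analogous with $\gamma=(k'-1)/(3k'-1)$.

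\textbf{Main obstacle.} I expect the hardest step to be the connection sum in \eqref{eq:FR-exterior-tail}. The cubes near $\Lambda_{m+1}$ are unstable with non-negligible probability, so the bound depends entirely on (M5) for bounded clusters reaching distance $L_{m,w}$. Making the polynomial losses come out as exactly $2d$ is where the bookkeeping can fail. If it does, I would accept a slightly worse exponent and recheck it against the positivity decay conditions.
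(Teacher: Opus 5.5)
Your proposal is correct and follows essentially the same route as the paper. The paper gets \eqref{eq:FR-exterior-tail} from the argument of \citet[Lemma~5.3]{McAuley2026a}, with \eqref{eq:SA-outside-perturbation} as the decay input and Lemma~\ref{lem:FR-nonnegative} removing the $\{f+\rho_m\geq\ell\}_{<\infty}$ connection term; it then gets \eqref{eq:SA-summable-exterior} from \citet[Lemma~4.8]{McAuley2026a} and \eqref{eq:EC-summable-exterior} from \citet[Lemma~3.3]{McAuley2026a} plus H\"older with $(k-1)$-th moments, checking the exponents against \eqref{eq:SA-positive-decay} and \eqref{eq:EC-positive-decay} exactly as you do. The only difference is that you reconstruct the near/far splitting (Bulinskaya-type bound on local instability plus (M5) for long bounded connections) that the paper imports by citation, and your H\"older step in the near region in fact yields the stronger exponent $-(\beta-d)(1-\eta)$, of which the stated $-\beta+2d+\eta$ is a weakening.
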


\begin{proof}
    To prove \eqref{eq:FR-exterior-tail}, we follow the argument for \citet[Lemma~5.3]{McAuley2026a}.
    Here \eqref{eq:SA-outside-perturbation} replaces \citet[equation~(5.15)]{McAuley2026a}, 
    and Lemma~\ref{lem:FR-nonnegative} allows us to omit the third term in \citet[equation~(5.16)]{McAuley2026a}, 
    which requires $\ell<-\ell_c$.
    
    Writing $L=L_{m,w}$, we obtain
    \begin{align*}
        \mathbb P\bigl(w\in\mathcal U_{\mathrm{FR}}(f,\rho_m)\bigr)
        \le C_{u,\epsilon}L^{-\beta+2d+\epsilon(\beta-d)}+C_NL^{-N}
    \end{align*}
    for $0<\epsilon<1$ and any $N>0$, uniformly in $m,w$ for
    sufficiently large $L$.
    Choosing $\epsilon$ sufficiently small and $N$ sufficiently
    large proves \eqref{eq:FR-exterior-tail}.

    For surface area, apply \citet[Lemma~4.8]{McAuley2026a},
    where the covariance term vanishes since $\rho_m$ is deterministic.
    For sufficiently small $\delta>0$,
    \begin{align}\label{eq:SA-shift-comparison}
        \mathbb E\left|\mu_{\operatorname{SA}}(B_w;f+\rho_m)-\mu_{\operatorname{SA}}(B_w;f)\right|
        &\leq C_{u,\delta}\left[\mathbb P(w\in\mathcal U_{\mathrm{FR}}(f,\rho_m))^{(k'-1)/k'}+\|\rho_m\|_{C^1(B_w)}^{(k'-1)/(3k'-1)-\delta}\right]\notag\\
        &\leq C_{u,\eta,\delta}\left[L^{-(\beta-2d-\eta)(k'-1)/k'}+L^{-(\beta-d)((k'-1)/(3k'-1)-\delta)}\right],
    \end{align}
    where the last inequality follows from \eqref{eq:SA-outside-perturbation} and \eqref{eq:FR-exterior-tail}.
    By \eqref{eq:SA-positive-decay}, both decay exponents exceed
    $d$ when $\eta,\delta>0$ are sufficiently small.
    Taking $\gamma_{\operatorname{SA}}$ to be their minimum proves
    \eqref{eq:SA-summable-exterior}.

    For Euler characteristic, which is a topological invariant,
    by \citet[Lemma~3.3]{McAuley2026a},
    \begin{align*}
        \xi_{n,w}(f+\rho_m)=\xi_{n,w}(f)\quad\text{on }\{w\in\mathcal U_{\mathrm{FR}}(f,\rho_m)\}^c
    \end{align*}

    By H\"older's inequality, \eqref{eq:EC-shift-moments} and \eqref{eq:FR-exterior-tail},
    \begin{align*}
        &\mathbb E\left|\xi_{n,w}(f+\rho_m)-\xi_{n,w}(f)\right|\\
        \leq&\left(\|\xi_{n,w}(f+\rho_m)\|_{k-1}+\|\xi_{n,w}(f)\|_{k-1}\right)\mathbb P(\{w\in\mathcal U_{\mathrm{FR}}(f,\rho_m)\})^{(k-2)/(k-1)}\\
        \leq&C_{u,\eta}L^{-(\beta-2d-\eta)(k-2)/(k-1)},
    \end{align*}
    uniformly in $m,n,w$ with $w\in I_n$.
    By \eqref{eq:EC-positive-decay}, we may choose $\eta>0$ sufficiently small that
    \begin{align*}
        \gamma_{\operatorname{EC}}:=(\beta-2d-\eta)\frac{k-2}{k-1}>d.
    \end{align*}

    This proves \eqref{eq:EC-summable-exterior}.
\end{proof}

\subsection{Surface area variance}\label{subsec:SA-variance-positive}

\begin{proof}[Proof of Proposition~\ref{prop:SA-positive}]
    By the proof of \citet[Theorem~1.3]{McAuley2026a},
    \begin{align*}
        c_{\operatorname{SA}}(\ell)=\mathbb E\mu_{\operatorname{SA}}(B_{\mathbf{0}};f)>0.
    \end{align*}

    Therefore, by stationarity and the additivity of $\mu_{\operatorname{SA}}$ in \citet[Lemma~4.7]{McAuley2026a}, 
    for every $n\in\mathbb N$,
    \begin{align}\label{eq:SA-unshifted-density}
        \mathbb E\mu_{\operatorname{SA}}(\Lambda_n;f)
        =\sum_{w\in I_n}\mathbb E\mu_{\operatorname{SA}}(B_w;f)
        =(2n)^d c_{\operatorname{SA}}(\ell).
    \end{align}

    Since $\partial\{f+\rho_m\geq\ell\}_{\infty}\subset\{f+\rho_m=\ell\}$, 
    the Kac--Rice calculation in the proof of \citet[Proposition~4.6]{McAuley2026a} gives,
    for bounded Borel set $E$,
    \begin{align}\label{eq:SA-shift-Kac-Rice}
        \mathbb E\mu_{\operatorname{SA}}(E;f+\rho_m)
        &\leq\mathbb E\mathcal H^{d-1}(E\cap\{f+\rho_m=\ell\})\notag\\
        &\leq\int_E\varphi(\ell-\rho_m(x))\mathbb E\left[\left|\nabla f(\mathbf{0})\right|+\left|\nabla\rho_m(x)\right|\right]\,dx,
    \end{align}
    where $\varphi$ is the density of Gaussian random variable $f(x)$.

    By \eqref{eq:smooth-shift-regularity},
    $\|\nabla\rho_m\|_\infty\le Cu$, where $C$ is independent of $u$ and $m$.
    Choose a sufficiently large fixed $u>0$ such that $u/2>\ell$ and
    \begin{align}\label{eq:varphi upper bound}
        \varphi(\ell-u/2)\bigl(\mathbb E|\nabla f(\mathbf{0})|+Cu\bigr)\leq \frac{c_{\operatorname{SA}}(\ell)}{4}.
    \end{align}

    Next choose a fixed integer $R\ge2$ such that \eqref{eq:SA-summable-exterior} applies whenever $L_{m,w}\ge R$, and
    \begin{align}\label{eq:R-inequality}
        \int_{\Lambda_R}q(y)\,dy\ge\frac{a_q}{2}.
    \end{align}

    Now fix $m>2R$, let $r=m-R, s=m+1+R$,
    and assume $n\geq s$. 
    By additivity of $\mu_{\operatorname{SA}}$ over unit cubes,
    \begin{align}\label{eq:SA-three-regions}
        \mathbb E\mu_{\operatorname{SA}}(\Lambda_n;f+\rho_m)-\mathbb E\mu_{\operatorname{SA}}(\Lambda_n;f)
        = I_1+I_2+I_3,
    \end{align}
    where
    \begin{align*}
        I_1&:=\sum_{w\in I_r}\mathbb E\!\left[\mu_{\operatorname{SA}}(B_w;f+\rho_m)-\mu_{\operatorname{SA}}(B_w;f)\right],\\
        I_2&:=\sum_{w\in I_s\setminus I_r}\mathbb E\!\left[\mu_{\operatorname{SA}}(B_w;f+\rho_m)-\mu_{\operatorname{SA}}(B_w;f)\right],\\
        I_3&:=\sum_{w\in I_n\setminus I_s}\mathbb E\!\left[\mu_{\operatorname{SA}}(B_w;f+\rho_m)-\mu_{\operatorname{SA}}(B_w;f)\right].
    \end{align*}

    For the interior term $I_1$, \eqref{eq:smooth-shift-core} and \eqref{eq:R-inequality} give $\rho_m\ge u/2$ on $\Lambda_r$. 
    Hence
    \begin{align*}
        \varphi(\ell-\rho_m(x))\le \varphi(\ell-u/2),\qquad x\in\Lambda_r.
    \end{align*}

    Combining \eqref{eq:SA-unshifted-density}, \eqref{eq:SA-shift-Kac-Rice} and \eqref{eq:varphi upper bound} therefore yields
    \begin{equation}\label{eq:SA-core}
        I_1=\mathbb E\mu_{\operatorname{SA}}(\Lambda_r;f+\rho_m)-\mathbb E\mu_{\operatorname{SA}}(\Lambda_r;f)
        \leq-\frac{3c_{\operatorname{SA}}(\ell)}{4}(2r)^d.
    \end{equation}

    For $I_2$, the number of cubes in $I_s\setminus I_r$ is
    \begin{align*}
        \left|I_s\setminus I_r\right|=(2s)^d-(2r)^d
        \leq C_{d,R}(m^{d-1}+1).
    \end{align*}

    Using the uniform moment bound \eqref{eq:SA-shift-moments},
    \begin{equation}\label{eq:SA-transition}
        \left|I_2\right|\leq C_{u}\left|I_s\setminus I_r\right|\leq C_{u,R}(m^{d-1}+1).
    \end{equation}

    For the exterior term $I_3$, denote
    \begin{align}\label{eq:SA-distance-definition}
        d_m(w):=\operatorname{dist}(w,\Lambda_{m+1}),
    \end{align}
    where $\operatorname{dist}$ is Euclidean distance. 
    Since $B_w=w+[0,1]^d$,
    \begin{align}\label{eq:SA-distance-comparison}
        L_{m,w}\leq d_m(w)\le\sqrt d\,(L_{m,w}+1).
    \end{align}

    For $w\in I_n\setminus I_s$ we have $L_{m,w}\ge R$.
    Thus, using \eqref{eq:SA-summable-exterior} and \citet[Lemma~2.4]{McAuley2026a} with $n=m+1$ and $r=R/2$,
    \begin{align}\label{eq:SA-exterior}
        \left|I_3\right|\leq C_u\sum_{w:L_{m,w}\ge R}L_{m,w}^{-\gamma_{\operatorname{SA}}}\leq C_{u,R}(m^{d-1}+1),
    \end{align}
    since $\gamma_{\operatorname{SA}}>d$.

    Combining \eqref{eq:SA-three-regions}--\eqref{eq:SA-exterior},
    \begin{align*}
        \mathbb E\mu_{\operatorname{SA}}(\Lambda_n;f+\rho_m)-\mathbb E\mu_{\operatorname{SA}}(\Lambda_n;f)
        \leq-\frac{3c_{\operatorname{SA}}(\ell)}{4}(2m-2R)^d+C_{u,R}(m^{d-1}+1),
    \end{align*}
    uniformly for $n\geq m+1+R$.
    For fixed $u,R$, choose $m$ sufficiently large, then 
    \begin{align*}
        \mathbb E\mu_{\operatorname{SA}}(\Lambda_n;f+\rho_m)-\mathbb E\mu_{\operatorname{SA}}(\Lambda_n;f)<0.
    \end{align*}

    For this fixed $m$, let $n\to\infty$, Lemma~\ref{lem:Gaussian-response} yields $R_{\operatorname{SA}}(h_m)<0$,
    thus $\sigma_{\operatorname{SA}}^2(\ell)>0$.
    The variance convergence and finiteness follow from \citet[Theorem~1.4]{McAuley2026a}.
\end{proof}

\subsection{Euler characteristic variance}\label{subsec:EC-variance-positive}
In the sequel we will prove Proposition~\ref{prop:EC-positive}. 
First we give a technical lemma. For EC, it turns the perturbation $\rho_m$ into some constant $u$.

\begin{lemma}\label{lem:EC-local-lift}
    Assume (M1)--(M4) of Assumption~\ref{ass:McAuley}, fix $\ell<\ell_c$ and $u>0$,
    and let $p_j\in C^2(\mathbb R^d)$ be deterministic functions such that
    \begin{align*}
        0\leq p_j\leq u,\qquad p_j\longrightarrow u\quad\text{in }C^2_{\mathrm{loc}}(\mathbb R^d).
    \end{align*}

    Let $F$ be a fixed finite union of closed faces of a unit cube, possibly the whole cube or the empty set. 
    If, for some $r>1$, $\sup_j\mathbb E|\mu_{\operatorname{EC}}(F;f+p_j)|^r<\infty$, then
    \begin{align*}
        \mathbb E\left|\mu_{\operatorname{EC}}(F;f+p_j)-\mu_{\operatorname{EC}}(F;f+u)\right|\longrightarrow0,
        \quad j\rightarrow\infty.
    \end{align*}
\end{lemma}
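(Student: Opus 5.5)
The plan is to prove convergence in probability and then upgrade it to $L^1$ by uniform integrability. More precisely, I will show that $\mu_{\operatorname{EC}}(F;f+p_j)=\mu_{\operatorname{EC}}(F;f+u)$ on events whose probability tends to one.

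The uniform integrability comes from the hypothesis $\sup_j\mathbb E|\mu_{\operatorname{EC}}(F;f+p_j)|^r<\infty$ with $r>1$. The term $\mu_{\operatorname{EC}}(F;f+u)$ is a single integrable variable: it is dominated by $C_dN(f+u)$ via \citet[Lemma~4.12]{McAuley2026a}, and the moment bounds recorded after \eqref{eq:EC-shift-moments} hold for $f+u$. So the differences are uniformly integrable, and the lemma reduces to showing
\[
\mathbb P\bigl(\mu_{\operatorname{EC}}(F;f+p_j)\ne\mu_{\operatorname{EC}}(F;f+u)\bigr)\to0 .
\]
Since both variables are integer valued, convergence in probability is the same as this statement.

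To get the equality I interpolate $g_t=f+p_j+t(u-p_j)$ for $t\in[0,1]$, writing $\delta_j=u-p_j\geq0$.

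\textbf{Step 1 (local stability).} By (M1) the field is $C^2$ and nondegenerate. A Bulinskaya-type argument then shows that, almost surely, $\ell$ is a regular value of $f+u$ on every stratum of the finitely many unit cubes near $F$. Let $\epsilon>0$ be the random margin
\[
\epsilon=\min_{S}\inf_{x\in S}\bigl(|f+u-\ell|+|\nabla_S(f+u)|\bigr),
\]
taken over strata of the relevant cubes. Since $\|\delta_j\|_{C^1}\to0$ on compacts, with probability tending to one every $g_t$ is regular on these strata. Equivalently, the cubes are outside $\mathcal U_{\mathrm{Loc}}(f+p_j,\delta_j)$.

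\textbf{Step 2 (finite-range stability).} Because $\delta_j\geq0$, Lemma~\ref{lem:FR-nonnegative} applies with $g=f+p_j$ and $p=\delta_j$. It reduces $\mathcal U_{\mathrm{FR}}$ to $\mathcal U_{\mathrm{Loc}}$ together with cubes joined to a locally unstable cube through bounded components of $\{f+p_j\geq\ell\}$.

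Here lies the main obstacle: $\delta_j\to0$ only locally, so far-away cubes may well be locally unstable. My first attempt is to bound this by connecting through bounded components. The components of $\{f+p_j\geq\ell\}$ meeting the cube $Q\supset F$ are contained in those of $\{f+u\geq\ell\}$, because $p_j\le u$. Almost surely only finitely many bounded components of $\{f+u\geq\ell\}$ meet $Q$ (Lemma~\ref{lem:cubical-excursion-components}), and they are compact, so they lie in some random box $\Lambda_{M}$. On $\Lambda_{M+1}$ the convergence $\delta_j\to0$ in $C^1$ is uniform, so the Step 1 argument makes every cube in $\Lambda_{M+1}$ locally stable with probability tending to one. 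Hence, with high probability, $Q\notin\mathcal U_{\mathrm{FR}}(f+p_j,\delta_j)$.

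I also need uniqueness of the unbounded component for each $f+p_j$, so that $\{\cdot\}_\infty$ is well defined. This holds because $f+p_j\geq f$ and $\ell<\ell_c$: the unbounded component of $\{f\geq\ell\}$ sits inside a single component of the larger set. Any other unbounded component would have to avoid it, and I would handle this as in \citet{McAuley2026a}.

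\textbf{Step 3 (conclusion).} On the high-probability event of Step 2, \citet[Lemma~3.3]{McAuley2026a} gives a stratified homeomorphism of the cube that preserves $F$ and carries $\{f+p_j\geq\ell\}_\infty\cap Q$ onto $\{f+u\geq\ell\}_\infty\cap Q$. Topological invariance of the Euler characteristic then gives the desired equality, which completes the proof.
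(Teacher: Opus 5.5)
Your reduction to $\mathbb P\bigl(\mu_{\operatorname{EC}}(F;f+p_j)\ne\mu_{\operatorname{EC}}(F;f+u)\bigr)\to0$ by uniform integrability is sound, and so is the local stability in Step~1. The gap is in Step~2. The inclusion $\{f+p_j\geq\ell\}\subseteq\{f+u\geq\ell\}$ only says that a component of $\{f+p_j\geq\ell\}$ meeting $Q$ lies in \emph{some} component of $\{f+u\geq\ell\}$ meeting $Q$, and that component may be the unbounded one. A bounded component of the smaller set can sit inside $\{f+u\geq\ell\}_\infty$, and then nothing confines it to your box $\Lambda_M$. For a given $j$ it may reach the region where $p_j$ is far from $u$, where cubes of $\mathcal U_{\mathrm{Loc}}(f+p_j,\delta_j)$ can occur. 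These are exactly the connections that Lemma~\ref{lem:FR-nonnegative} keeps in $\mathcal U_{\mathrm{FR}}(f+p_j,\delta_j)$. With the orientation $g=f+p_j$, $p=\delta_j\geq0$ that the lemma requires, it discards the term with $\{f+u\geq\ell\}_{<\infty}$, which your box argument could handle, and keeps the term with $\{f+p_j\geq\ell\}_{<\infty}$, which it cannot. So $Q\notin\mathcal U_{\mathrm{FR}}(f+p_j,\delta_j)$ is not established.

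The missing idea is a direct proof that a piece of $\{f+u\geq\ell\}\cap Q$ lying in the unbounded cluster is eventually connected to infinity inside $\{f+p_j\geq\ell\}$. The paper argues as follows.
\begin{itemize}
\item In each of the finitely many components $C$ of $\{f+u\geq\ell\}\cap F$, pick $x_C$ with $f(x_C)+u>\ell$.
\item If $C\subseteq\{f+u\geq\ell\}_\infty$, join $x_C$ by a compact path $\gamma\subseteq\{f+u>\ell\}$ to a point $y\in\{f\geq\ell\}_\infty$ with $f(y)>\ell$. This is possible because $\{f\geq\ell\}_\infty$ lies in the unique unbounded component of $\{f\geq\ell-u\}$, as $\ell-u<\ell_c$.
\item Since $p_j\to u$ uniformly on $\gamma$, eventually $\gamma\subseteq\{f+p_j>\ell\}$, so $x_C\in\{f+p_j\geq\ell\}_\infty$.
\end{itemize}
The reverse implication follows at once from $\{f+p_j\geq\ell\}\subseteq\{f+u\geq\ell\}$. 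The isotopy on $Q$ from \citet[Lemma~3.1]{McAuley2026a} carries $C$ to the component of $\{f+p_j\geq\ell\}\cap F$ containing $x_C$. Together these give $H_{j,1}(F\cap\{f+u\geq\ell\}_\infty)=F\cap\{f+p_j\geq\ell\}_\infty$ for all large $j$, almost surely, with no finite-range argument.

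Inserting this step would also repair your Step~2, but it makes Lemma~\ref{lem:FR-nonnegative} and \citet[Lemma~3.3]{McAuley2026a} superfluous. Finally, you do not need uniqueness of the unbounded component of $\{f+p_j\geq\ell\}$. McAuley's uniqueness results are for stationary fields and do not give it anyway. Read $\{g\geq\ell\}_\infty$ as the union of all unbounded components, as in Subsection~\ref{subsec:FR-reduction}.
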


\begin{proof}
    Let $Q$ be the unit cube containing $F$. We apply \citet[Lemma~3.1]{McAuley2026a} with
    \begin{align*}
        g:=f+u,\quad q_j:=p_j-u,\quad g+tq_j=f+u+t(p_j-u),\quad 0\le t\le1,
    \end{align*}

    Since $\ell-u$ is almost surely a regular value of $f$ on every cubical stratum of $Q$, 
    $\ell$ is a regular value of $g=f+u$, see \citet[Lemma~4.10]{McAuley2026a}.
    Hence, with probability 1, for every stratum $S$ of $Q$,
    \begin{align*}
        \{x\in S:g(x)=\ell,\ \nabla_S g(x)=\mathbf{0}\}=\varnothing.
    \end{align*}

    Since $Q$ has only finitely many closed cubical strata, 
    compactness and stratified regularity give a random $\delta>0$ such that
    \begin{align*}
        |g(x)-\ell|+|\nabla_Sg(x)|\ge\delta
    \end{align*}
    for every stratum $S$ of $Q$ and every $x\in S$.

    Therefore, since $p_j\to u$ in $C^2_{\mathrm{loc}}$, 
    for sufficiently large $j$, uniformly in $x\in S\subset Q, t\in[0,1]$ we have,

    \begin{align*}
        |g(x)+tq_j(x)-\ell|+|\nabla_S(g+tq_j)(x)|
        &\geq|g(x)-\ell|+|\nabla_Sg(x)|-t|q_j(x)|-t|\nabla_Sq_j(x)|\\
        &\geq \frac{\delta}{2}.
    \end{align*}

    In other words, $(g,q_j)=(f+u,p_j-u)$ is locally stable on $Q$ in the sense of \citet[Lemma~3.1]{McAuley2026a}.
    Consequently, for all sufficiently large $j$, that lemma gives
    a stratified isotopy $H_{j,t}$, $0\le t\le1$, with
    $H_{j,0}=\operatorname{id}$ and for the fixed union of faces $F$,
    \begin{align}\label{eq:EC-lift-isotopy}
        H_{j,t}\bigl(\{f+u\ge\ell\}\cap F\bigr)=\{f+u+t(p_j-u)\ge\ell\}\cap F.
    \end{align}

    In particular, at $t=1$,
    \begin{align}\label{eq:EC-lift-endpoint}
       H_{j,1}\bigl(\{f+u\ge\ell\}\cap F\bigr)=\{f+p_j\ge\ell\}\cap F. 
    \end{align}

    Apply Lemma~\ref{lem:cubical-excursion-components} to each face $F_i$ in $F$,
    with $f+u$ restricted to lower dimensions, 
    the set $\{f+u\geq\ell\}\cap F$ has finitely many connected components.
    Equation~\eqref{eq:EC-lift-endpoint} therefore gives a one-to-one
    correspondence $C\mapsto H_{j,1}(C)$ between the components of
    $\{f+u\geq\ell\}\cap F$ and those of
    $\{f+p_j\geq\ell\}\cap F$.

    Stratified regularity allows us to choose, in each nonempty component $C$, 
    an inner point $x_C$ such that $f(x_C)+u>\ell$. 
    Since there are only finitely many $x_C$ and $p_j\to u$ locally uniformly, 
    for all sufficiently large $j$,
    \begin{align*}
        f(x_C)+u+t\bigl(p_j(x_C)-u\bigr)>\ell,\qquad 0\leq t\leq1,
    \end{align*}
    simultaneously for every $C$. 
    Combining with \eqref{eq:EC-lift-isotopy}, the continuous curve $t\mapsto H_{j,t}^{-1}(x_C)$ lies in $C$,
    and hence $x_C\in H_{j,1}(C)$.

    Now we claim that
    \begin{equation}\label{eq:EC-lift-label}
        C\subseteq\{f+u\geq\ell\}_\infty\quad\Longleftrightarrow\quad H_{j,1}(C)\subseteq\{f+p_j\geq\ell\}_\infty.
    \end{equation}

    We use only the inclusions
    \begin{align}\label{inclusion}
        \{f\geq\ell\}\subseteq\{f+p_j\geq\ell\}\subseteq\{f+u\geq\ell\}.
    \end{align}

    \medskip
    \noindent\textbf{\textit{Step 1.}}
    Suppose that $H_{j,1}(C)$ is contained in the unbounded component of $\{f+p_j\geq\ell\}$. 
    This component contains $x_C$ and, by \eqref{inclusion}, is a connected unbounded subset of $\{f+u\geq\ell\}$. 
    Thus the component of $\{f+u\geq\ell\}$ containing $x_C$ is unbounded. In other words,
    \begin{align*}
        C\subseteq\{f+u\geq\ell\}_\infty.
    \end{align*}

    \medskip
    \noindent\textbf{\textit{Step 2.}}
    Suppose that $C\subseteq\{f+u\geq\ell\}_\infty$. 
    Both $\{f\geq\ell\}$ and $\{f+u\geq\ell\}$ have a unique unbounded component, and the former set is contained in the latter. 
    Hence both $x_C$ and $\{f\geq\ell\}_\infty$ lie in $\{f+u\geq\ell\}_{\infty}$. 
    
    Choose $y\in\{f\geq\ell\}_\infty$ with $f(y)>\ell$. 
    By regularity, there is a compact path $\gamma\subseteq\{f+u>\ell\}$ joining $x_C$ to $y$. 
    Local uniform convergence of $p_j$ to $u$ gives, for all sufficiently large $j$,
    \begin{align*}
        \inf_{z\in\gamma}\left(f(z)+p_j(z)-\ell\right)\geq\inf_{z\in\gamma}\left(f(z)+u-\ell\right)-\sup_{z\in\gamma}\left(u-p_j(z)\right)>0,
    \end{align*}
    so $\gamma\subseteq\{f+p_j>\ell\}\subseteq\{f+p_j\geq\ell\}$.
    As $y\in\{f\geq\ell\}_\infty\subset\{f+p_j\geq\ell\}_{\infty}$, we must have $\gamma\subset\{f+p_j\geq\ell\}_{\infty}$.
    Especially, $x_C\in\{f+p_j\geq\ell\}_{\infty}$.
    Since $x_C\in H_{j,1}(C)$, this proves the implication from left to right in \eqref{eq:EC-lift-label}.

    There are only finitely many components $C$, so
    \eqref{eq:EC-lift-label} holds for all of them simultaneously when
    $j$ is sufficiently large. Together with
    \eqref{eq:EC-lift-endpoint}, it gives
    \begin{align*}
        H_{j,1}\bigl(F\cap\{f+u\geq\ell\}_\infty\bigr)=F\cap\{f+p_j\geq\ell\}_\infty.
    \end{align*}

    Hence topological invariance of Euler characteristic yields $\mu_{\operatorname{EC}}(F;f+p_j)$ almost surely converge to $\mu_{\operatorname{EC}}(F;f+u)$.
    Combining with $\sup_j\mathbb E|\mu_{\operatorname{EC}}(F;f+p_j)|^r<\infty$, 
    the almost sure convergence to zero implies
    \begin{align*}
        \mathbb E\left|\mu_{\operatorname{EC}}(F;f+p_j)-\mu_{\operatorname{EC}}(F;f+u)\right|\longrightarrow0.
    \end{align*}
\end{proof}

\begin{proof}[Proof of Proposition~\ref{prop:EC-positive}]
    We prove that  $R_{\operatorname{EC}}(h_m)\ne0$, 
    and then apply Lemma~\ref{lem:Gaussian-response}.

    \medskip\noindent
    \textbf{\textit{Step 1.}}
    By Lemma~\ref{lem:EC-cavity-nonvanishing} and
    \citet[Theorem~1.3]{McAuley2026a}, choose $u>0$ such that
    \begin{align*}
        \Delta:=c_{\operatorname{EC}}(\ell-u)-c_{\operatorname{EC}}(\ell)\ne0.
    \end{align*}

    Indeed, if $c_{\operatorname{EC}}(\ell)=0$, then
    Lemma~\ref{lem:EC-cavity-nonvanishing} gives
    $c_{\operatorname{EC}}(\ell-u)\ne0$ for all sufficiently large $u$.
    If $c_{\operatorname{EC}}(\ell)\ne0$, then
    $c_{\operatorname{EC}}(\ell-u)\to0$ as $u\to\infty$ by
    \citet[Theorem~1.3]{McAuley2026a}, so the same conclusion follows.
    Fix this $u$ and recall $\Gamma_{n,w}$ in \eqref{eq:EC-cube-geometry}, $h_m,\rho_m$ in \eqref{eq:smooth-white-noise-shift}.

    By the proof of \citet[Theorem~1.3]{McAuley2026a}, with stationarity and symmetry of the field $f$,
    \begin{equation}\label{eq:EC-cell-mean}
        \mathbb E\bigl[\mu_{\operatorname{EC}}(B_{\mathbf{0}};f+t)-\mu_{\operatorname{EC}}(\Gamma_{n,\mathbf{0}};f+t)\bigr]
        =c_{\operatorname{EC}}(\ell-t),\qquad t\in\{0,u\},\quad n\geq2.
    \end{equation}
    
    For $w\in I_r, r<n$, $\Gamma_{n,w}=w+\Gamma_{n,\mathbf{0}}$.
    Hence, by stationarity,
    \begin{equation}\label{eq:EC-constant-cell-response}
        \mathbb E\bigl[\xi_{n,w}(f+u)-\xi_{n,w}(f)\bigr]
        =c_{\operatorname{EC}}(\ell-u)-c_{\operatorname{EC}}(\ell)
        =\Delta.
    \end{equation}

    \medskip\noindent
    \textbf{\textit{Step 2.}}
    For an integer $R\ge1$, denote
    \begin{equation}\label{eq:EC-uniform-interior}
        \varepsilon_R:=
        \sup_{\substack{m\in\mathbb N\colon m>2R}}
        \ \sup_{\substack{n\in\mathbb N\colon n\ge m+1+R}}
        \ \sup_{w\in I_{m-R}}\mathbb E\left|\xi_{n,w}(f+\rho_m)-\xi_{n,w}(f+u)\right|.
    \end{equation}

    The uniform moment bounds in \eqref{eq:EC-shift-moments}
    ensure that $\varepsilon_R<\infty$.
    For each integer $R\ge2$, choose $m_R>2R, n_R\geq m_R+1+R, w_R\in I_{m_R-R}$ such that
    \begin{align}\label{epsilon_R bound}
        0\le\varepsilon_R&\le\mathbb E\left|\xi_{n_R,w_R}(f+\rho_{m_R})-\xi_{n_R,w_R}(f+u)\right|+\frac1R.
    \end{align}

    Define $p_R(z):=\rho_{m_R}(w_R+z)$.
    By \eqref{eq:smooth-shift-regularity} and \eqref{eq:smooth-shift-core},
    \begin{align*}
        0\le p_R\le u,\quad p_R\longrightarrow u\quad\text{in }C^2_{\mathrm{loc}}(\mathbb R^d).
    \end{align*}

    Moreover, by \citet[Lemma~4.12]{McAuley2026a}, stationarity and \eqref{eq:EC-shift-moments},
    \begin{align*}
        \sup_{R\ge2}\mathbb E|\mu_{\operatorname{EC}}(F;f+p_R)|^{k-1}\le C_u,\qquad F\in\{B_{\mathbf{0}},\Gamma_{2,\mathbf{0}}\}.
    \end{align*}

    Thus Lemma~\ref{lem:EC-local-lift}, applied to these two fixed sets, gives
    \begin{align*}
        \mathbb E\left|\xi_{2,\mathbf{0}}(f+p_R)-\xi_{2,\mathbf{0}}(f+u)\right|=
        \mathbb E\left|\xi_{n_R,w_R}(f+\rho_{m_R})-\xi_{n_R,w_R}(f+u)\right|\longrightarrow0.
    \end{align*}

    Since $\Gamma_{n_R,w_R}=w_R+\Gamma_{2,\mathbf{0}}$, when $R\rightarrow\infty$, \eqref{epsilon_R bound} yields
    \begin{align*}
        \varepsilon_R\longrightarrow0.
    \end{align*}

    \medskip\noindent
    \textbf{\textit{Step 3.}}
    Fix an integer $R\geq 2$ such that \eqref{eq:EC-summable-exterior} applies whenever $L_{m,w}\ge R$,
    and 
    \begin{align*}
        \epsilon_R\leq \frac{\left|\Delta\right|}{4}.
    \end{align*}

    For integers $m>2R$ and $n\ge m+1+R$, let 
    \begin{align*}
        r=m-R,\qquad s=m+1+R.
    \end{align*}

    Using the same argument as \eqref{eq:EC-exact-sum} for the new field $f+\rho_m$, we have the following decomposition
    \begin{equation}\label{eq:EC-three-regions}
        \mathbb E\mu_{\operatorname{EC}}(\Lambda_n;f+\rho_m)-\mathbb E\mu_{\operatorname{EC}}(\Lambda_n;f)
        =I_1+I_2+I_3,
    \end{equation}
    where
    \begin{align*}
        I_1&:=\sum_{w\in I_r}\mathbb E\bigl[\xi_{n,w}(f+\rho_m)-\xi_{n,w}(f)\bigr],\\
        I_2&:=\sum_{w\in I_s\setminus I_r}\mathbb E\bigl[\xi_{n,w}(f+\rho_m)-\xi_{n,w}(f)\bigr],\\
        I_3&:=\sum_{w\in I_n\setminus I_s}\mathbb E\bigl[\xi_{n,w}(f+\rho_m)-\xi_{n,w}(f)\bigr].
    \end{align*}

    For $I_1$, \eqref{eq:EC-constant-cell-response} and \eqref{eq:EC-uniform-interior} give
    \begin{align}\label{eq:EC-core-response}
        |I_1-\Delta(2r)^d|
        &=\left|\sum_{w\in I_r}\mathbb E\bigl[\xi_{n,w}(f+\rho_m)-\xi_{n,w}(f+u)\bigr]\right|\notag\\
        &\leq\sum_{w\in I_r}\mathbb E\left|\xi_{n,w}(f+\rho_m)-\xi_{n,w}(f+u)\right|\notag\\
        &\leq\varepsilon_R(2r)^d.
    \end{align}

    For $I_2$, the uniform moments in
    \eqref{eq:EC-shift-moments} imply
    \begin{align}\label{eq:EC-transition-response}
        |I_2|&\leq\sum_{w\in I_s\setminus I_r}\bigl(\mathbb E|\xi_{n,w}(f+\rho_m)|+\mathbb E|\xi_{n,w}(f)|\bigr)\notag\\
        &\leq C_u\bigl((2s)^d-(2r)^d\bigr)\notag\\
        &\leq C_{u,R}(m^{d-1}+1).
    \end{align}

    For $I_3$, recall \eqref{eq:EC-summable-exterior}, let $\gamma=\gamma_{\operatorname{EC}}>d$. 
    Since $w\in I_n\setminus I_s$ implies $L_{m,w}\geq R$, 
    the same argument as in \eqref{eq:SA-distance-definition}--\eqref{eq:SA-exterior} gives
    \begin{align}\label{eq:EC-exterior-response}
        |I_3|\leq C_u\sum_{w:L_{m,w}\geq R}L_{m,w}^{-\gamma}
        \leq C_{u,R}(m^{d-1}+1),
    \end{align}
    uniformly in $n\geq s$.

    \medskip\noindent
    \textbf{\textit{Step 4.}}
    Combining \eqref{eq:EC-three-regions}, \eqref{eq:EC-core-response}, \eqref{eq:EC-transition-response} and \eqref{eq:EC-exterior-response}, 
    we obtain that for $n\geq s$,
    \begin{align}\label{eq:EC-response-asymptotic}
        \left|\mathbb E\mu_{\operatorname{EC}}(\Lambda_n;f+\rho_m)-\mathbb E\mu_{\operatorname{EC}}(\Lambda_n;f)-\Delta(2r)^d\right|
        \leq\varepsilon_R(2r)^d+C_{u,R}(m^{d-1}+1).
    \end{align}

    Since $u,R$ are fixed and $r=m-R$, choose $m>2R$ such that 
    \begin{align*}
        C_{u,R}(m^{d-1}+1)\leq \frac{\left|\Delta\right|}{4}(2r)^d
    \end{align*}
    
    Finally let $n\rightarrow\infty$, \eqref{eq:EC-response-asymptotic} and Step 2 show that, for a suitable $R>0$,
    \begin{align*}
        |R_{\operatorname{EC}}(h_m)|\geq |\Delta|(2r)^d-\left|R_{\operatorname{EC}}(h_m)-\Delta(2r)^d\right|\geq\frac{|\Delta|}{2}(2r)^d>0.
    \end{align*}

    Lemma~\ref{lem:Gaussian-response} now implies $\sigma_{\operatorname{EC}}^2(\ell)>0$.
    Variance convergence and finiteness follow from \citet[Theorem~1.4]{McAuley2026a}.
\end{proof}

\subsection{Low-level Euler density}\label{subsec:EC-low-density}
Recall the law of large numbers proved in \citet{McAuley2026a} that 
\begin{align*}
    c_{\operatorname{EC}}(\ell)=\lim_{n\rightarrow\infty}\frac{\mu_{\operatorname{EC}}(\Lambda_n;f)}{\left|\Lambda_n\right|}.
\end{align*}

The main result of this subsection is the following lemma.

\begin{lemma}\label{lem:EC-cavity-nonvanishing}
Assume (M1)--(M4) of Assumption~\ref{ass:McAuley}.  
There exists $u_0<\infty$ such that
\begin{align*}
    c_{\operatorname{EC}}(-u)\ne0,\qquad u\ge u_0.
\end{align*}
\end{lemma}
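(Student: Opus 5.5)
Our plan is to compute $c_{\operatorname{EC}}(-u)$ for very negative levels by comparing the Euler characteristic of the unbounded component with that of the full excursion set, whose density is explicit by Kac--Rice (Gaussian kinematic formula). Write $\ell=-u$. The Euler characteristic density of the whole excursion set $\{f\geq\ell\}$ per unit volume is given by the Adler--Taylor formula
\begin{align*}
    c^{\mathrm{full}}_{\operatorname{EC}}(\ell)=\frac{\det(\Lambda)^{1/2}}{(2\pi)^{(d+1)/2}}H_{d-1}(\ell)e^{-\ell^2/2},
\end{align*}
where $\Lambda=-\nabla^2K(0)$ and $H_{d-1}$ is the Hermite polynomial; this is legitimate under (M1)--(M2). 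For $\ell=-u\to-\infty$ this behaves like $C_d(-1)^{d-1}u^{d-1}e^{-u^2/2}$, which is nonzero with an explicit order. Equivalently, by Morse theory it is the alternating sum of the densities of critical points of $f$ above $\ell$ by index, and as $\ell\to-\infty$ it is dominated by the local minima of $f$ below $-u$ (``cavities'', i.e.\ bounded components of $\{f<\ell\}$), each contributing $(-1)^{d-1}$ via Alexander duality.

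The second step is to show that $c^{\mathrm{full}}_{\operatorname{EC}}(-u)-c_{\operatorname{EC}}(-u)$ is of strictly smaller order than $u^{d-1}e^{-u^2/2}$. By additivity, the difference is the density of the Euler characteristic of the bounded components of $\{f\geq -u\}$. Each bounded component $C$ of $\{f\geq -u\}$ must be enclosed by the sublevel set $\{f<-u\}$ surrounding it, so the unit cube containing a point of $C$ sees, within a bounded distance with high probability, a connected region of $\{f<-u\}$ that separates $C$ from infinity. I will bound $\mathbb E|\operatorname{EC}(C)|$ per unit volume by the expected number of critical points of $f$ in cubes $B_w$ such that $B_w$ meets a bounded component of $\{f\geq -u\}$, using Hölder with the critical point moments (Gass--Stecconi, as used in Lemma~\ref{lem:EC-envelope}) and a probability estimate for the event that $B_w$ is separated from infinity by $\{f<-u\}$. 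The latter requires $\{f<-u\}$ to contain a closed surface of diameter at least of order one around $B_w$; by a Borell--TIS/Gaussian-concentration argument such an event has probability at most $e^{-(1+c)u^2/2}$, since forcing $f<-u$ on a whole surface costs strictly more than forcing it at a single point (using positive correlations, $K\geq0$ and nondegeneracy of $K$). Near that surface the relevant critical points are bounded in number by moment bounds, so the contribution is $O(e^{-(1+c')u^2/2})$, negligible against $u^{d-1}e^{-u^2/2}$.

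Combining, $c_{\operatorname{EC}}(-u)=C_d(-1)^{d-1}u^{d-1}e^{-u^2/2}(1+o(1))\neq0$ for $u\geq u_0$. The main obstacle is the second step: quantifying that a bounded component of $\{f\geq-u\}$ is exponentially rarer, with an improved Gaussian exponent, than a single point below $-u$. Small bounded components could be created near a deep minimum (e.g.\ a tiny island of $\{f\geq -u\}$ inside a cavity), which only requires the field to dip below $-u$ on a thin shell around a small island; I would handle this by observing that such a configuration still forces $f<-u$ on a sphere of radius at least a fixed $r_0$ with high probability, or otherwise forces a critical configuration of the Hessian at level $-u$ whose Kac--Rice density carries an extra factor (a saddle of index $\neq d$ near level $-u$ has density $O(u^{d-2}e^{-u^2/2})$). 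If the surface-cost bound proves delicate, I would instead fall back on this index-counting: show that islands inside cavities produce critical points of index less than $d$ at levels close to $-u$, whose total density is $o(u^{d-1}e^{-u^2/2})$.
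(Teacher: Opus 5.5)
Your first step is sound. The Adler--Taylor density of the full excursion set is available here, and $c_{\operatorname{EC}}(-u)$ differs from it exactly by the Euler-characteristic density of the bounded components of $\{f\ge -u\}$. So showing that this correction is $o(u^{d-1}e^{-u^2/2})$ would prove the lemma, and would even give an asymptotic, which the paper does not obtain. But that second step is the whole content of the lemma, and the argument you give for it does not work, for two reasons.

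First, the geometric input fails at the relevant scale. At level $-u$ the components of $\{f<-u\}$ typically have diameter of order $1/u$, so an island can be enclosed by a shell of diameter $\asymp 1/u$ rather than of order one. At that scale the surface-cost mechanism gives no gain: the averaging bound for two points $x,y$ below $-u$ has exponent $u^2/(1+K(x-y))$ with $1+K(x-y)=2-O(|x-y|^2)$. That improves the exponent by only $O(u^2|x-y|^2)=O(1)$.

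Second, suppose you grant $\mathbb P(A_w)\le e^{-(1+c)u^2/2}$ for the event that $B_w$ meets an island. H\"older against the critical-point count, which under (M1) has only $k-1$ moments, then gives $\mathbb E[N_w\mathbf 1_{A_w}]\le Ce^{-(1+c)\frac{k-2}{k-1}\frac{u^2}{2}}$. This is $o(u^{d-1}e^{-u^2/2})$ only if $c>1/(k-2)$, so a small unquantified $c$ yields nothing.

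The mechanism that works is the one in your fallback, but it has to be quantified in a specific way. A component $C$ of $\{f\le -u\}$ enclosing an island has $b_{d-1}(C)\ge 1$ by Alexander duality. By the weak Morse inequalities it therefore contains a critical point of $f$ at level $\le -u$ that is not a local minimum. Such bad critical points have intensity $\nu_{\mathrm{bad}}(u)\le Ce^{-(1/2+\delta)u^2}$. The reason is that, conditionally on $f(\mathbf 0)=-t$ and $\nabla f(\mathbf 0)=\mathbf 0$, the Hessian is $t\Sigma+Z$ with $Z$ independent. Failure of positive definiteness then costs $\mathbb P(\|Z\|_{\mathrm{op}}\ge\lambda t)\le e^{-c_1t^2}$. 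Crucially, H\"older is applied to $|\det\nabla^2 f|$, which has moments of all orders, inside the Kac--Rice integral, so the gain survives.

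The polynomial gain $O(u^{d-2}e^{-u^2/2})$ you quote is far weaker and would not suffice. In the natural argument, tying a bad critical point to the cube where an island sits requires a union bound over $\asymp R^d$ cubes. Here $R$ must be $\gtrsim u^2$ so that the $e^{-c_0R}$ tail on cavity diameters is negligible; that tail comes from subcritical decay at level $-a_0>\ell_c$ together with $f\overset{d}{=}-f$.

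Even with all this, your route still has to control the Euler characteristic of the islands themselves. These may contain many critical points at high levels, so you would have to redo Kac--Rice with the island indicator inside the conditional expectation, on every cubical stratum.

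The paper avoids the full excursion set and the islands entirely:
\begin{itemize}
\item It writes $\{f\ge-u\}_\infty=\mathbb R^d\setminus\bigcup_{D\in\mathscr D_{-u}}\operatorname{int}D$ over maximal fillings.
\item By mass transport it obtains $(-1)^{d-1}c_{\operatorname{EC}}(-u)=\mathbb E\sum_{z(D)=\mathbf 0}\operatorname{EC}(D)$.
\item It uses $\operatorname{EC}(D)\ge 1-N_{\mathrm{bad}}(C)$ to get $(-1)^{d-1}c_{\operatorname{EC}}(-u)\ge\rho_u-\nu_{\mathrm{bad}}(u)$.
\item It lower-bounds the density $\rho_u$ of maximal fillings by comparison with $\Phi(-u)$.
\end{itemize}
Only bad critical points at level $\le -u$ are ever counted.
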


\begin{proof}
    Fix $a_0<\min\{\ell_c,-\ell_c\}, -u<a_0$.
    Every component $C$ of the sublevel set $L_{-u}:=\{f\leq -u\}$ is then bounded, as $f\stackrel d=-f$.
    If $V_\infty(C)$ denotes the unbounded component of $\mathbb R^d\setminus C$, define its filling
    \footnote{For example, if $C=\{x\in\mathbb R^2:1\leq |x|\leq2\}$, then $\operatorname{Fill}(C)=\{x\in\mathbb R^2:|x|\leq2\}$.} by
    \begin{align*}
        \operatorname{Fill}(C):=\mathbb R^d\setminus V_\infty(C).
    \end{align*}

    Then we have 
    \begin{align*}
        C\subseteq\operatorname{Fill}(C)\subseteq\operatorname{conv}(C),
        \qquad
        \operatorname{diam}\operatorname{Fill}(C)=\operatorname{diam}C.
    \end{align*}

    Filled components are either disjoint or nested.  
    Let $\mathscr D_{-u}$ be the collection of maximal filled components. 
    \footnote{For example, take two components $C_1=B_2(\mathbb{R}^2)\setminus B_1(\mathbb{R}^2), C_2=B_4(\mathbb{R}^2)\setminus B_3(\mathbb{R}^2)$,
    then $\operatorname{Fill}(C_1)\subset\operatorname{Fill}(C_2)$, and we only take $\operatorname{Fill}(C_2)\in\mathscr D_{-u}$.} 
    Additionally, Lemma~\ref{lem:EC-cavity-large-tail} ensures that maximal filled components exists with probability 1.
    Then we claim that
    \begin{equation}\label{eq:EC-cavity-decomposition}
        \{f\geq -u\}_\infty
        =\mathbb R^d\setminus\bigcup_{D\in\mathscr D_{-u}}\operatorname{int}D.
    \end{equation}

    Indeed, the right-hand side is an unbounded subset of $\{f\geq -u\}$ and contains $\{f\geq -u\}_\infty$. 
    Take $x_0$ in the right-hand side, then $x_0\in\{f\geq -u\}$.
    If $x_0$ is contained in a bounded component $C_0\subset\{f\geq -u\}_{<\infty}$,
    then $C_0$ must be surrounded by some component of $\{f\leq -u\}$, which must contained in some filling $D\in\mathscr D_{-u}$. 
    Thus $x_0\in \{f\geq -u\}_\infty$. Therefore \eqref{eq:EC-cavity-decomposition} holds.

    Recall the proof of \citet[Theorem~1.3]{McAuley2026a}, we have 
    \begin{equation}\label{eq:EC-cavity-cell-mean}
        c_{\operatorname{EC}}(-u)=\mathbb E\psi_{-u}(B_{\mathbf 0}),
    \end{equation}
    where 
    \begin{align*}
        \psi_{-u}(B_v):=\operatorname{EC}(\{f\geq -u\}_\infty\cap B_v)-\operatorname{EC}(\{f\geq -u\}_\infty\cap\partial_{\mathrm{LL}}B_v),   
    \end{align*}
    and
    \begin{align*}
        \partial_{\mathrm{LL}}B_v:=\{x\in\partial B_v:x_i=v_i\text{ for some }i\}.
    \end{align*}
    By \eqref{eq:EC-cavity-decomposition}, local finiteness of $\mathscr D_{-u}$ (see Lemma~\ref{lem:EC-cavity-large-tail}), 
    and Euler additivity, for $E=\partial_{\mathrm{LL}}B_v$ or $B_v$,
    \begin{equation}\label{eq:EC-cavity-additivity-on-E}
        \operatorname{EC}(\{f\geq -u\}_\infty\cap E)
        =\operatorname{EC}(E)+\sum_{D\in\mathscr D_{-u}}\bigl(\operatorname{EC}(\partial D\cap E)-\operatorname{EC}(D\cap E)\bigr).
    \end{equation}

    Subtracting \eqref{eq:EC-cavity-additivity-on-E} for $E=\partial_{\mathrm{LL}}B_v$ and $E=B_v$,
    and using $\operatorname{EC}(B_v)=\operatorname{EC}(\partial_{\mathrm{LL}}B_v)=1$, gives
    \begin{equation}\label{eq:EC-cavity-cell-decomposition}
        \psi_{-u}(B_v)=\sum_{D\in\mathscr D_{-u}}w_v(D),
    \end{equation}
    where
    \begin{align*}
        w_v(D):=\operatorname{EC}(\partial D\cap B_v)-\operatorname{EC}(D\cap B_v)
        -\operatorname{EC}(\partial D\cap\partial_{\mathrm{LL}}B_v)+\operatorname{EC}(D\cap\partial_{\mathrm{LL}}B_v).
    \end{align*}

    Fix $D\in\mathscr D_{-u}$.
    Since $D$ is compact, only finitely many of the following terms are
    nonzero.
    The cubical additivity formula, applied first to $D$ and then
    to $\partial D$, gives
    \begin{align*}
        \sum_{v\in\mathbb Z^d}
        \bigl(\operatorname{EC}(A\cap B_v)
        -\operatorname{EC}(A\cap\partial_{\mathrm{LL}}B_v)\bigr)
        =\operatorname{EC}(A),
        \qquad A\in\{D,\partial D\}.
    \end{align*}

    Thus
    \begin{align*}
        \sum_{v\in\mathbb Z^d}w_v(D)=\operatorname{EC}(\partial D)-\operatorname{EC}(D)=(-1)^{d-1}\operatorname{EC}(D).
    \end{align*}

    Now we choose a unique anchor point $z(D)\in\mathbb{Z}^d$ for each $D\in\mathscr{D}_{-u}$,
    where we let the minimum point of $f$ in $D$ contains in $B_{z(D)}$.
    If the minimum point is not unique, then $z(D)$ is the minimum one in lexicographic order.
    Then $z(D+k)=z(D)+k$ for every $k\in\mathbb Z^d$.
    Therefore we have
    \begin{align*}
        c_{\operatorname{EC}}(-u)
        &=\sum_{z\in\mathbb Z^d}\mathbb E\sum_{\substack{D\in\mathscr D_{-u}\\z(D)=z}}w_{\mathbf 0}(D)\\
        &=\sum_{z\in\mathbb Z^d}\mathbb E\sum_{\substack{D\in\mathscr D_{-u}\\z(D)=\mathbf 0}}w_{-z}(D)\\
        &=\mathbb E\sum_{\substack{D\in\mathscr D_{-u}\\z(D)=\mathbf 0}}\sum_{v\in\mathbb Z^d}w_v(D)\\
        &=(-1)^{d-1}\mathbb E\sum_{\substack{D\in\mathscr D_{-u}\\z(D)=\mathbf 0}}\operatorname{EC}(D),
    \end{align*}
    where Lemma~\ref{lem:EC-cavity-absolute-summability} justifies the exchange of expectation and summation.

    Equivalently,
    \begin{equation}\label{eq:EC-cavity-mass-transport}
        (-1)^{d-1}c_{\operatorname{EC}}(-u)
        =\mathbb E\sum_{\substack{D\in\mathscr D_{-u}\\z(D)=\mathbf 0}}\operatorname{EC}(D).
    \end{equation}

    Let $N_{\mathrm{bad}}(C)$ be the number of critical points of $f$ in $C$ that are not local minima.  
    Define 
    \begin{align*}
        \rho_u&:=\mathbb E\#\{D\in\mathscr D_{-u}:z(D)=\mathbf 0\},\\
        \nu_{\mathrm{bad}}(u)&:=\mathbb E\#\bigl\{x\in[0,1)^d:\nabla f(x)=\mathbf 0,\ f(x)\le-u,\ \nabla^2f(x)\not\succ0\bigr\}.
    \end{align*}

    Denote $Q_v=v+[0,1)^d, C(D)$ be the component of $\{f\leq -u\}$ whose filling is $D$.  
    Tonelli's theorem and stationarity give
    \begin{align*}
        \mathbb E\sum_{\substack{D\in\mathscr D_{-u}\\z(D)=\mathbf 0}}N_{\mathrm{bad}}(C(D))
        &=\sum_{v\in\mathbb Z^d}\mathbb E\sum_{\substack{D\in\mathscr D_{-u}\\z(D)=\mathbf 0}}N_{\mathrm{bad}}(C(D)\cap Q_v)\\
        &=\sum_{v\in\mathbb Z^d}\mathbb E\sum_{\substack{D\in\mathscr D_{-u}\\z(D)=-v}}N_{\mathrm{bad}}(C(D)\cap Q_{\mathbf 0})\\
        &=\mathbb E\sum_{D\in\mathscr D_{-u}}N_{\mathrm{bad}}(C(D)\cap Q_{\mathbf 0})\leq\nu_{\mathrm{bad}}(u).
    \end{align*}

    Consequently, \eqref{eq:EC-cavity-mass-transport}, Lemma~\ref{lem:EC-cavity-topological-bound} and Lemma~\ref{lem:EC-cavity-bad-intensity} imply
    \begin{equation}\label{eq:EC-cavity-density-lower-bound}
        (-1)^{d-1}c_{\operatorname{EC}}(-u)\geq\rho_u-\nu_{\mathrm{bad}}(u)\geq\rho_u-Ce^{-(1/2+\delta)u^2}.
    \end{equation}

    It remains to obtain a lower bound on $\rho_u$.  
    Let
    \begin{align*}
        v_u:=\mathbb P(\mathbf 0\notin\{f\geq-u\}_\infty).
    \end{align*}

    Since $f(\mathbf 0)<-u$ implies $\mathbf 0\notin\{f\geq-u\}_\infty$,
    \begin{equation}\label{eq:EC-cavity-volume-lower-bound}
        v_u\geq \Phi(-u),
    \end{equation}
    where $\Phi$ is the standard Gaussian distribution function.
    
    By stationarity of the field,
    \begin{align*}
        \mathbb P\bigl(\exists D\in\mathscr D_{-u}:
        \mathbf 0\in\operatorname{int}D,\ \operatorname{diam}D\le R\bigr)
        &\leq\sum_{\substack{v\in\mathbb Z^d\\|v|_\infty\le R+1}}\mathbb P\bigl(\exists D\in\mathscr D_{-u}:z(D)=v\bigr)\\
        &\leq\sum_{\substack{v\in\mathbb Z^d\\|v|_\infty\le R+1}}\mathbb{P}\left(\#\{D\in\mathscr D_{-u}:z(D)=v\}\geq 1\right)\\
        &\leq\sum_{\substack{v\in\mathbb Z^d\\|v|_\infty\le R+1}}\mathbb E\#\{D\in\mathscr D_{-u}:z(D)=v\}\\
        &\leq C_dR^d\rho_u.
    \end{align*}

    Therefore we have
    \begin{equation}\label{eq:EC-cavity-volume-decomposition}
        v_u\le C_dR^d\rho_u+
        \mathbb P\left(\exists D\in\mathscr D_{-u}:
        \mathbf 0\in D,\ \operatorname{diam}D>R\right).
    \end{equation}

    Suppose now that $c_{\operatorname{EC}}(-u)=0$. 
    Combining \eqref{eq:EC-cavity-density-lower-bound}--\eqref{eq:EC-cavity-volume-decomposition} and Lemma~\ref{lem:EC-cavity-large-tail}
    we obtain
    \begin{equation}\label{eq:EC-cavity-contradiction-bound}
        \Phi(-u)\le CR^de^{-(1/2+\delta)u^2}+C_0e^{-c_0R}.
    \end{equation}

    Choose $R=Au^2$ with $c_0A>1/2$.  
    Since $\Phi(-u)\asymp u^{-1}e^{-u^2/2}$, division by $\Phi(-u)$ in \eqref{eq:EC-cavity-contradiction-bound} gives
    \begin{align*}
        1\le Cu^{2d+1}e^{-\delta u^2}+C_0u e^{-(c_0A-1/2)u^2}.
    \end{align*}

    Let $u\rightarrow\infty$, then this leads to a contradiction. 
    Therefore, there exist $u_0<\infty$, for any $u>u_0$, $c_{\operatorname{EC}}(-u)\not=0$.
\end{proof}

Finally, we state several lemmas applied in the proof of Lemma~\ref{lem:EC-cavity-nonvanishing}.

\begin{lemma}\label{lem:EC-cavity-absolute-summability}
    With the notation of Lemma~\ref{lem:EC-cavity-nonvanishing}, we have
    \begin{align*}
        \mathbb E\sum_{\substack{D\in\mathscr D_{-u}\\z(D)=\mathbf 0}}
        \sum_{v\in\mathbb Z^d}|w_v(D)|<\infty.
    \end{align*}
\end{lemma}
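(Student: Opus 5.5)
The plan is to bound $\sum_v|w_v(D)|$ deterministically by a quantity that is local to $D$, and then control the expectation of the sum over $D$ with $z(D)=\mathbf 0$ by moments of critical point counts.

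\textbf{Deterministic bound.} Each $w_v(D)$ is a combination of four Euler characteristics of sets of the form $D\cap E$ or $\partial D\cap E$, with $E\in\{B_v,\partial_{\mathrm{LL}}B_v\}$. Since $D=\operatorname{Fill}(C)$ and $\partial D\subseteq\partial C\subseteq\{f=-u\}$, these sets are (almost surely) stratified sub- or level sets of $f$ restricted to a finite union of cubical faces. By the Morse-theoretic bound \citet[Lemma~4.12]{McAuley2026a}, the Euler characteristic of such a set restricted to a union of faces of $B_v$ is at most $C_d N_v(f)$. I would redo that argument with the filled set in place of $\{f\geq\ell\}$: locally, away from the bounded holes that were filled, $D$ coincides with $\{f\le -u\}$, and on the filled interiors it is the whole cube piece. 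Hence $|w_v(D)|\leq C_d(N_v(f)+1)\mathbf 1\{B_v\cap D\ne\varnothing\}$, where $N_v$ counts stratified critical points of $f$ on $B_v$ at level $-u$ together with the usual critical points. Only cubes with $B_v\cap D\neq\varnothing$ contribute, and the number of such cubes is at most $C_d(1+\operatorname{diam}D)^d$.

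\textbf{Reduction to a tail estimate.} The maximal filled components are disjoint, and each cube $B_v$ meets only a bounded number of them plus those crossing its boundary. Combined with $z(D)=\mathbf 0$, this gives
\begin{align*}
\sum_{\substack{D\in\mathscr D_{-u}\\ z(D)=\mathbf 0}}\sum_v|w_v(D)|\le C_d\sum_{v}(N_v(f)+1)\,\#\{D:z(D)=\mathbf 0,\ D\cap B_v\ne\varnothing\}.
\end{align*}
A component with $z(D)=\mathbf 0$ that reaches $B_v$ has diameter at least $|v|_\infty-1$. So, using Hölder's inequality with the moment bound $\mathbb E N_v^{k-1}\le C$ (as in \eqref{eq:EC-score-critical-bound}), each summand is bounded by $C\,\mathbb E[M_v^{q}]^{1/q}\,\mathbb P(\exists D\ni \mathbf 0\text{-cube},\ \operatorname{diam}D\ge|v|_\infty-1)^{\theta}$ for some $\theta>0$. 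Here $M_v$ is the number of such $D$, which is bounded by the number of components of $\{f\le -u\}\cap B_{\mathbf 0}$ and hence by $C N_{\mathbf 0}(f)+1$ via Lemma~\ref{lem:cubical-excursion-components} and Morse theory.

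\textbf{Main obstacle.} Summing over $v$ requires the probability that a filled component of $\{f\le -u\}$ touching $B_{\mathbf 0}$ has diameter at least $R$ to decay faster than $R^{-d/\theta}$. This is where Lemma~\ref{lem:EC-cavity-large-tail}, the exponential bound $C_0e^{-c_0R}$ for large filled components at $-u<a_0$, enters. I expect this to be the delicate point. The filling of $C$ has the same diameter as $C$, so the tail of $\operatorname{diam}D$ reduces to the tail of $\operatorname{diam}C$ for a bounded component of the subcritical set $\{f\le -u\}$ (subcritical because $f\stackrel d=-f$ and $-u<-\ell_c$). Exponential decay then gives $\sum_v |v|^{0}e^{-c\theta|v|}<\infty$, which proves the claim.
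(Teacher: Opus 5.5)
Your argument is correct, but it takes a different route from the paper. The paper never uses the size of the fillings. It proves the stronger local bound $\sum_{D\in\mathscr D_{-u}}|w_v(D)|\le C_dN_v$ over \emph{all} maximal fillings meeting $B_v$ at once. This is done first for finite subfamilies, via the decomposition $K=L\cup H$ with $L=K\cap\{f\le -u\}$ and $H=\operatorname{int}K\cap\{f\ge -u\}$, and then by splitting according to the sign of $w_v(D)$. It then applies the mass-transport identity coming from stationarity and $z(D+k)=z(D)+k$:
\[
\mathbb E\sum_{z(D)=\mathbf 0}\sum_{v}|w_v(D)|=\mathbb E\sum_{D\in\mathscr D_{-u}}|w_{\mathbf 0}(D)|\le C_d\,\mathbb EN_{\mathbf 0}.
\]
So the paper needs only a first moment of the critical-point count, and no tail estimate enters. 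You only need a bound for a single $D$, and you pay for the sum over $v$ with three ingredients: the exponential diameter tail of Lemma~\ref{lem:EC-cavity-large-tail}, the $(k-1)$-th moments of $N_v$, and H\"older's inequality with exponent $\theta=1-2/(k-1)>0$, using $k\ge4$. That is more probabilistic input, but it avoids controlling arbitrarily many fillings passing through one cube.

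Three points should be tightened.

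\begin{enumerate}
\item The single-$D$ bound is best justified exactly by the paper's decomposition applied to $K=D$. Your heuristic that $D$ is ``the whole cube piece'' on the filled holes is inaccurate, because a hole meeting $B_v$ generally contains bounded superlevel components and nested sublevel components of $f$.
\item Lemma~\ref{lem:EC-cavity-large-tail} is stated for $\mathbf 0\in D$, whereas your event is $D\cap B_{\mathbf 0}\ne\varnothing$ and $D\cap B_v\ne\varnothing$. Its proof goes through verbatim with $\Lambda_{2L}$ replaced by $\Lambda_{2L+1}$.
\item Lemma~\ref{lem:cubical-excursion-components} only gives finiteness, not the bound of the number of $D$ with $z(D)=\mathbf 0$ by $N_{\mathbf 0}$. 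That bound comes from two facts. First, distinct maximal fillings are disjoint, so their minimum points lie in distinct components of $\{f\le-u\}\cap B_{\mathbf 0}$. Second, the minimum of $f$ over each such component is almost surely a stratified critical point, since $-u$ is a regular value on all strata.
\end{enumerate}
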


\begin{lemma}
    \label{lem:EC-cavity-topological-bound}
    Let $g\in C^2(\mathbb R^d)$, let $a$ be a regular value of $g$, and let
    $C$ be a compact connected component of $\{g\le a\}$.  Suppose that all
    critical points of $g$ in $C$ are nondegenerate, and 
    $D=\operatorname{Fill}(C)$.  If $N_{\mathrm{bad}}(C)$ is the number of
    critical points of $g$ in $C$ that are not local minima, then
    \begin{equation}\label{eq:EC-cavity-topological-bound}
        \operatorname{EC}(D)\ge1-N_{\mathrm{bad}}(C).
    \end{equation}
\end{lemma}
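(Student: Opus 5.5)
The plan is to use Morse theory for manifolds with boundary, after first identifying the homotopy type of $D$. Since $a$ is a regular value, $C$ is a compact $d$-dimensional submanifold of $\mathbb R^d$ with smooth boundary $\partial C\subseteq\{g=a\}$. The filling $D=\operatorname{Fill}(C)$ is $C$ together with all bounded components $V_1,\ldots,V_m$ of $\mathbb R^d\setminus C$; there are finitely many of them because $\partial C$ is a compact manifold with finitely many components. Each $\overline{V_i}$ is a compact domain whose boundary is one component $\Sigma_i$ of $\partial C$. Hence $D$ is a compact manifold with boundary, and $\partial D$ is the single component $\Sigma_0$ of $\partial C$ adjacent to the unbounded component $V_\infty(C)$.

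\textbf{Step 1 (Euler characteristic of $C$).} On $C$, $g\le a$ with $g=a$ on $\partial C$. Moreover $\nabla g$ is nonzero on $\partial C$ and points outward, since $g>a$ just outside $C$ near a regular boundary point. Standard Morse theory for sublevel sets then gives $\operatorname{EC}(C)=\sum_{x}(-1)^{\operatorname{ind}(x)}$, the sum running over the finitely many nondegenerate critical points $x$ of $g$ in $\operatorname{int}C$; equivalently, apply Poincar\'e--Hopf to the outward-pointing field $\nabla g$. Local minima contribute $+1$ and every other critical point contributes $\pm1\ge -1$, so $\operatorname{EC}(C)\ge N_{\min}(C)-N_{\mathrm{bad}}(C)\ge 1-N_{\mathrm{bad}}(C)$. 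Here $N_{\min}(C)\ge1$ because $g$ attains its minimum on the compact set $C$. That minimum lies in the interior, since at a boundary point $\nabla g\neq 0$ points outward, so $g$ decreases into $C$.

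\textbf{Step 2 (passing from $C$ to $D$).} The key step is to show $\operatorname{EC}(D)\ge \operatorname{EC}(C)$, or to run the argument directly on $D$. By additivity, $\operatorname{EC}(D)=\operatorname{EC}(C)+\sum_i\operatorname{EC}(\overline{V_i})-\sum_i\operatorname{EC}(\Sigma_i)$. On each $\overline{V_i}$ we have $g\ge a$ with $g=a$ on $\Sigma_i$, and $\nabla g$ points inward. Poincar\'e--Hopf for $-\nabla g$, which points outward, gives $\operatorname{EC}(\overline{V_i})=\sum_{y\in V_i}(-1)^{d-\operatorname{ind}(y)}$, summing over critical points of $g$ in $V_i$. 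These points are not in $C$ and are not controlled by $N_{\mathrm{bad}}(C)$, and this is the main obstacle: the holes can contain arbitrary topology.

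The cleanest fix is to use the fact that $\operatorname{EC}$ of a compact $d$-manifold with boundary is determined by its boundary: $\operatorname{EC}(\partial M)=(1-(-1)^d)\operatorname{EC}(M)$. When $d$ is odd, this gives $\operatorname{EC}(\overline{V_i})=\tfrac12\operatorname{EC}(\Sigma_i)$. When $d$ is even, each $\Sigma_i$ is a closed odd-dimensional manifold, so $\operatorname{EC}(\Sigma_i)=0$. For $d$ even I would instead use Alexander duality: $D$ has connected complement, and its complement deformation retracts onto the complement of $\Sigma_0$'s bounded side. This suggests comparing $D$ with the region bounded by $\Sigma_0$ and checking whether hole contributions cancel.

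Since this parity bookkeeping looks fragile, the fallback is to exploit the trick implicit in the proof: $\operatorname{EC}(D)=\operatorname{EC}(\partial D)\cdot\tfrac12$ for $d$ odd, and $\operatorname{EC}(D)=1-b_1(D)+\cdots$ for $d$ even. I expect the real argument to apply Morse theory to $g$ on $D$ after modifying $g$ inside each hole $V_i$. Replace $g$ on $\overline{V_i}$ by a function that still has value $a$ and outward-pointing gradient on $\Sigma_i$, in the sense of decreasing into $D$, and that has no "good" structure to worry about; for example, extend $g$ across $V_i$ to a function $\le a$. Any extension works, since Poincar\'e--Hopf on $D$ only counts the critical points of the modified function. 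The aim is to choose the extension so that its critical points inside $V_i$ are all local minima. If such an extension exists, Step 1 applied to $D$ with the modified function gives $\operatorname{EC}(D)\ge 1-N_{\mathrm{bad}}(C)$ directly. Constructing this extension is where I would expect the genuine difficulty, and it may require using that $\Sigma_i$ bounds $V_i$ in $\mathbb R^d$.
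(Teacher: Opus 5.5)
Your Step 1 is correct, but the proof stops at the passage from $C$ to $D$, and neither route you suggest can be completed. The inequality $\operatorname{EC}(D)\ge\operatorname{EC}(C)$ is false once $d\ge3$. Additivity gives $\operatorname{EC}(D)=\operatorname{EC}(C)+(-1)^d\sum_i\operatorname{EC}(\overline{V_i})$, and the correction can have either sign. For the shell $C=\{1\le|x|\le2\}\subset\mathbb R^3$ one has $\operatorname{EC}(C)=2$ but $\operatorname{EC}(D)=1$. The modification trick fails for the same reason. Because $\nabla g\ne0$ points into $V_i$ along $\Sigma_i$, any $C^1$ extension exceeds $a$ just inside $V_i$. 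Moreover, Poincar\'e--Hopf for the inward-pointing field $\nabla\tilde g$ on $\overline{V_i}$ forces the critical points of any Morse extension in $V_i$ to have signed count $\sum(-1)^{\operatorname{ind}}=(-1)^d\operatorname{EC}(\overline{V_i})$. This equals $-1$ for a ball-shaped hole in $\mathbb R^3$, so an extension whose critical points in the holes are all local minima does not exist in general. Any bookkeeping with signed critical-point counts hits this obstruction. The negative hole contributions are compensated by bad critical points in $C$, but you can only see this through an unsigned, Morse-inequality argument.

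The missing idea, which is how the paper argues, is to discard signs and compare Betti numbers. Since $D$ is connected and $H_d(D;\mathbb Q)=0$, we have $\operatorname{EC}(D)=1+\sum_{i=1}^{d-1}(-1)^ib_i(D)\ge1-\sum_{i=1}^{d-1}b_i(D)$. Alexander duality in $S^d$ gives $b_i(A)=\dim\widetilde H_{d-i-1}(S^d\setminus A;\mathbb Q)$ for $A\in\{C,D\}$ and $1\le i\le d-1$. Now $S^d\setminus D$ is the single component $V_0$ of $S^d\setminus C$ containing $\infty$, and homology splits as a direct sum over components. Hence $b_i(D)\le b_i(C)$ for $1\le i\le d-2$, while $b_{d-1}(D)=\dim\widetilde H_0(V_0;\mathbb Q)=0$. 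So filling can only kill homology, even though it can lower $\operatorname{EC}$. On $C$ we have $g\le a$ with $\nabla g$ pointing outward along $\partial C$, so the weak Morse inequalities for this manifold with boundary give $b_i(C)\le m_i(C)$, the number of index-$i$ critical points. Hence $\operatorname{EC}(D)\ge1-\sum_{i=1}^{d-1}m_i(C)\ge1-N_{\mathrm{bad}}(C)$. Your mention of Alexander duality was the right instinct. Applied to Betti numbers rather than to $\operatorname{EC}$, it works uniformly in $d$ and makes your Step 1 unnecessary.
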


\begin{lemma}
    \label{lem:EC-cavity-bad-intensity}
    Assume (M1)--(M4) of Assumption~\ref{ass:McAuley}, and define
    \begin{align*}
        \nu_{\mathrm{bad}}(u)
        :=\mathbb E\#\bigl\{x\in[0,1)^d:\nabla f(x)=\mathbf 0,
        \ f(x)\le-u,\ \nabla^2f(x)\not\succ0\bigr\}.
    \end{align*}
    There exist constants $C,\delta>0$ such that, for all sufficiently large
    $u$,
    \begin{equation}\label{eq:EC-cavity-bad-intensity}
        \nu_{\mathrm{bad}}(u)\le Ce^{-(1/2+\delta)u^2}.
    \end{equation}
\end{lemma}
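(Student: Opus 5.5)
The bound \eqref{eq:EC-cavity-bad-intensity} will come from the Kac--Rice formula for critical points, restricted to the event $f(x)\le -u$. By stationarity and (M1) ($K\in C^{2k+\epsilon_0}$ with $k\ge4$, so $f\in C^2$ with a nondegenerate Gaussian $2$-jet), we have
\begin{align*}
    \nu_{\mathrm{bad}}(u)=\int_{[0,1)^d}\mathbb E\Bigl[|\det\nabla^2 f(x)|\,\mathbf 1\{f(x)\le-u,\ \nabla^2 f(x)\not\succ0\}\Bigm|\nabla f(x)=\mathbf 0\Bigr]\,p_{\nabla f(x)}(\mathbf 0)\,dx .
\end{align*}
Here $p_{\nabla f(x)}$ is the density of the nondegenerate gradient. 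This reduces the problem to a bound on a single Gaussian expectation at $x=\mathbf 0$.

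Next I use the Gaussian structure of the triple $(f(\mathbf 0),\nabla f(\mathbf 0),\nabla^2 f(\mathbf 0))$. By stationarity and symmetry (M4), $\nabla f(\mathbf 0)$ is independent of $(f(\mathbf 0),\nabla^2 f(\mathbf 0))$, so the conditioning on $\nabla f=\mathbf 0$ can be dropped. Moreover $\mathrm{Cov}(f,\partial_{ij}f)=\partial_{ij}K(\mathbf 0)=-\lambda\delta_{ij}$, where $\lambda:=-\partial_{11}K(\mathbf 0)>0$. Hence we can write
\begin{align*}
    \nabla^2 f(\mathbf 0)=-\lambda f(\mathbf 0)\,I+M,
\end{align*}
with $M$ a centred Gaussian matrix independent of $f(\mathbf 0)$. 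On the event $f(\mathbf 0)=t\le -u$ we have $\nabla^2 f=\lambda|t| I+M$. This fails to be positive definite only if $\|M\|_{\mathrm{op}}\ge\lambda|t|\ge\lambda u$, which has probability at most $Ce^{-c\lambda^2u^2}$ for some $c>0$ depending on the covariance of $M$. By Cauchy--Schwarz,
\begin{align*}
    \mathbb E\bigl[|\det(\lambda|t|I+M)|\,\mathbf 1\{\|M\|\ge\lambda|t|\}\bigr]\le C(1+|t|)^d e^{-c'|t|^2}.
\end{align*}
Integrating against the standard normal density then gives
\begin{align*}
    \nu_{\mathrm{bad}}(u)\le C\int_{-\infty}^{-u}(1+|t|)^d e^{-t^2/2-c'' t^2}\,dt\le Ce^{-(1/2+\delta)u^2}
\end{align*}
for any $0<\delta<c''$ and all large $u$.

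The main obstacle is justifying the Kac--Rice formula and the Gaussian decomposition. I rely on (M1)--(M4) to get nondegeneracy of $(f,\nabla f)$ and a regular Gaussian Hessian law. I expect the cited Kac--Rice results (\citet{GassStecconi2024}, \citet[Proposition~4.14]{McAuley2026a}) to handle this. If the law of $M$ is degenerate, the Gaussian tail bound on $\|M\|$ still holds, so it causes no difficulty. The isotropy needed for $\mathrm{Cov}(f,\partial_{ij}f)=-\lambda\delta_{ij}$ follows from the invariance of $q$ under sign changes and permutations.
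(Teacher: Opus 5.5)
Your proposal is correct and follows essentially the same route as the paper: a Kac--Rice reduction to the origin, independence of $\nabla f(\mathbf 0)$ from $(f(\mathbf 0),\nabla^2 f(\mathbf 0))$, writing the Hessian as $-f(\mathbf 0)\Sigma$ plus a Gaussian matrix independent of $f(\mathbf 0)$, a Gaussian tail for that matrix's operator norm, and H\"older to absorb $|\det|$. The differences are minor: you use isotropy to get $\Sigma=\lambda I$, and you apply Cauchy--Schwarz pointwise in $t$, which avoids the paper's large H\"older exponent. You should add the one-line check, done in the paper, that $\lambda=\operatorname{Var}(\partial_1 f(\mathbf 0))=\int|\partial_1 q|^2>0$; otherwise $q$ would be constant along lines, and (M3) would force $q\equiv0$.
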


\begin{lemma}\label{lem:EC-cavity-large-tail}
    Assume (M1)--(M4) of Assumption~\ref{ass:McAuley}, and fix $a_0<-\ell_c$.
    For $u$ satisfying $-u\le a_0$, let $\mathscr F_{-u}$ be the family of
    all fillings of the components of $\{f\le-u\}$.
    There exist constants $C_0,c_0>0$, independent of $u$ and $R$, such that
    for every $R\ge1$,
    \begin{equation}\label{eq:EC-cavity-large-tail}
        \mathbb P\left(\exists D\in\mathscr F_{-u}:
        \mathbf 0\in D,\ \operatorname{diam}D>R\right)
        \le C_0e^{-c_0R}.
    \end{equation}

    Further, for each fixed such $u$, 
    almost surely every element of $\mathscr F_{-u}$ is contained in a maximal filling.

    Additionally, the family $\mathscr D_{-u}$ of maximal fillings is also almost surely locally finite: 
    every compact subset of $\mathbb R^d$ meets only finitely many members of $\mathscr D_{-u}$.
\end{lemma}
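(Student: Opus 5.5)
The statement to prove is Lemma~\ref{lem:EC-cavity-large-tail}: an exponential tail for large fillings of components of $\{f\le -u\}$ through the origin, uniform in $u$ with $-u\le a_0<-\ell_c$. The plan also gives existence of maximal fillings and local finiteness of $\mathscr D_{-u}$.

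\textbf{Step 1: reduction to $a_0$.} Monotonicity reduces everything to the single level $a_0$. If $-u\le a_0$, then $\{f\le -u\}\subseteq\{f\le a_0\}$. Hence every component $C$ of $\{f\le-u\}$ lies in a component $C'$ of $\{f\le a_0\}$, and $\operatorname{Fill}(C)\subseteq\operatorname{Fill}(C')$. So it suffices to bound $\mathbb P(\exists D\in\mathscr F_{a_0}:\mathbf 0\in D,\operatorname{diam}D>R)$, and the constants are then automatically independent of $u$.

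By symmetry $f\stackrel d=-f$, the set $\{f\le a_0\}$ has the law of $\{f\ge -a_0\}$, with $-a_0>\ell_c$. This is the subcritical regime.

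\textbf{Step 2: the key event.} A filling $D$ containing $\mathbf 0$ with diameter $>R$ comes from a bounded component $C$ with $\operatorname{diam}C=\operatorname{diam}D>R$. Moreover, $\mathbf 0\in\operatorname{conv}(C)$, or more precisely $\mathbf 0$ is either in $C$ or enclosed by $C$. In both cases $C$ meets some unit cube $B_v$ with $|v|\le$ something, and $C$ reaches distance $\gtrsim R$ from $B_v$.

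The enclosing case needs a finer argument. If $\mathbf 0$ is enclosed, then the segment from $\mathbf 0$ to infinity along a coordinate ray must hit $C$ at some point $x$. Either $|x|\le R/2$, in which case $C$ has a point within $R/2$ of the origin and diameter $>R$, so it contains a path of length $\gtrsim R/2$ from $B(0,R/2)$. Or $|x|>R/2$, in which case $C$ itself is a connected set at distance $\ge$ a constant times its diameter.

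In either case, a union bound gives
\[
\sum_{k\ge R/2} \sum_{|v|\sim k} \mathbb P\bigl(B_v\overset{\{f\le a_0\}}{\longleftrightarrow}\partial(v+\Lambda_{k})\bigr)\le \sum_{k} Ck^{d-1}e^{-ck}\le C_0e^{-c_0R}.
\]
This uses the subcritical sharpness estimate of exponential decay of one-arm probabilities for $\{f\ge -a_0\}$, which is available under (M1)--(M4) from \citet{Severo2022,MuirheadVanneuville2020}.

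\textbf{Main obstacle.} The main obstacle is making the enclosure geometry rigorous and invoking exactly the right sharpness result. Sharpness is usually stated as exponential decay in the subcritical phase for $\ell>\ell_c$; I would take it as given (it is how \citet{McAuley2026a} uses it). The polynomial entropy factor $k^{d-1}$ from the union bound is harmless against the exponential decay.

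\textbf{Step 3: existence of maximal fillings.} For fixed $u$, the tail bound summed over $v\in\mathbb Z^d$ with a radius growing in $|v|$ shows the following via Borel--Cantelli: almost surely, for every compact set $K$, only fillings of bounded diameter meet $K$. Hence the fillings meeting $K$ form a finite family. Finiteness follows because components of $\{f\le-u\}$ meeting a bounded region are finitely many, by the regularity of $-u$ and Lemma~\ref{lem:cubical-excursion-components} applied cube by cube. An increasing chain of nested fillings containing a given $D$ then stays inside a bounded region, so it is eventually constant and has a maximal element. This gives existence of maximal fillings.

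\textbf{Step 4: local finiteness.} Local finiteness of $\mathscr D_{-u}$ follows from the same finiteness statement, since $\mathscr D_{-u}\subseteq\mathscr F_{-u}$.
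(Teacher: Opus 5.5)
Your tail bound is correct in substance and follows the paper's core idea. You pass to arm events in $\{f\le a_0\}\overset{d}{=}\{f\ge -a_0\}$ with $-a_0>\ell_c$, use subcritical exponential one-arm decay (the paper quotes McAuley's Theorem~1.1), and let it beat the polynomial number of starting cubes. Your reduction via $\operatorname{Fill}(C)\subseteq\operatorname{Fill}(C')$ is a tidy substitute for the paper's direct comparison of arm events through $\{f\le-u\}\subseteq\{f\le a_0\}$.

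The enclosure geometry you flag as the main obstacle is avoidable. Since $\mathbf 0\in D\supseteq C(D)$, all of $C(D)$ lies within $\operatorname{diam}D$ of the origin. The paper therefore decomposes dyadically in the diameter. On $\{L<\operatorname{diam}D\le 2L\}$ one has $C(D)\subseteq\Lambda_{2L}$; it picks $x,y\in C(D)$ with $|x-y|>L$ and $x\in B_v$, pays $O(L^d)$ for the choice of $v$, and sums over $L=2^jR$.

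Your ray-hitting case split also works, but the justification of your second case is backwards. What your display actually uses is $\operatorname{diam}C\ge|x|$ for $x\in C$, so that the arm from $B_v\ni x$ has length comparable to $|v|$. This follows from $\mathbf 0\in\operatorname{conv}(C)$: it gives some $y\in C$ with $\langle x,y\rangle\le 0$, hence $|x-y|\ge|x|$. A lower bound on the distance in terms of the diameter is neither true in general nor what is needed.

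There is a small hole in Step~3. Summing the tail bound over $v\in\mathbb Z^d$ only controls fillings that contain a lattice point. A thin filling can meet a compact $K$ without containing any point of $\mathbb Z^d$, so ``only fillings of bounded diameter meet $K$'' does not yet follow.

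The repair is one line. Suppose $D$ meets $B_v$. Either $C(D)$ meets $B_v$, and then $\operatorname{diam}D>R$ directly forces an arm event from $B_v$ of size $\asymp R$. Or $C(D)\cap B_v=\varnothing$, and then connectedness of $B_v$ puts all of $B_v$, in particular $v$, inside $D$. Alternatively, the dyadic argument works verbatim with $\mathbf 0\in D$ replaced by $D\cap B_{\mathbf 0}\ne\varnothing$.

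Borel--Cantelli is then unnecessary. For each $N$, a union bound over the cubes of $\Lambda_N$ shows that the largest diameter of a filling meeting $\Lambda_N$ is a.s.\ finite. With this patch your route proves local finiteness of all of $\mathscr F_{-u}$, which yields both remaining claims at once.

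The paper argues differently for these two claims:
\begin{itemize}
\item For maximal fillings, it picks a rational point in $\operatorname{int}D$ and confines all $D'\supseteq D$ to a ball. It counts them via the finitely many critical points there, since each compact sublevel component contains its minimum.
\item For local finiteness of $\mathscr D_{-u}$, it needs no diameter bound at all. It uses only $\partial D\subseteq C(D)$, finiteness of the components of $\{f\le-u\}\cap B_v$, and the fact that at most one maximal filling can contain $B_v$.
\end{itemize}
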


The proofs of Lemmas~\ref{lem:EC-cavity-absolute-summability}--\ref{lem:EC-cavity-large-tail} are as following.

\begin{proof}[Proof of Lemma~\ref{lem:EC-cavity-absolute-summability}]
    Let $N_v=N_{\operatorname{Crit}}(B_v,f)$ denote the total number of critical points on the open cubical strata of $B_v$, 
    including its vertices.  
    Fix a finite subfamily $\mathscr A\subseteq\mathscr D_{-u}$ and set
    \begin{align}\label{eq:EC-cavity-KLH}
        K:=\bigcup_{D\in\mathscr A}D,\qquad
        L:=K\cap\{f\leq -u\},\qquad
        H:=\operatorname{int}K\cap\{f\geq -u\}.
    \end{align}

    Here and below, $F$ may be a closed cubical face of $B_v$, with
    $F=B_v$ allowed, or a finite union of such faces.  Statements involving
    $\dim F$ are first applied to a single face and then extended to finite
    unions by inclusion--exclusion.

    The decompositions
    \begin{align}\label{eq:EC-cavity-KLH-decomposition}
        K=L\cup H,\qquad L\cap H=\partial H,\qquad
        \partial L=\partial K\bigsqcup\partial H
    \end{align}
    and Euler additivity imply
    \begin{align}\label{eq:EC-cavity-KLH-euler}
        \operatorname{EC}(\partial K\cap F)-\operatorname{EC}(K\cap F)
        =\operatorname{EC}(\partial L\cap F)-\operatorname{EC}(L\cap F)-\operatorname{EC}(H\cap F).
    \end{align}

    The set $L$ is a union of components of $\{f\leq -u\}$, 
    whereas $H$ is a union of bounded components of $\{f\geq -u\}$.  
    Hence, for every closed cubical face $F$ of $B_v$, allowing $F=B_v$, \citet[Lemma~4.12]{McAuley2026a}, 
    applied to $-f$ for $L$ and to $f$ for $H$, gives
    \begin{align}\label{eq:EC-cavity-excursion-face-bound}
        |\operatorname{EC}(L\cap F)|+|\operatorname{EC}(H\cap F)|\leq C_dN_v.
    \end{align}
    
    Let $k=\dim F$ and $M=L\cap F$.  
    For a zero-dimensional face $F$, $\partial L\cap F=\varnothing$.
    For $1\leq k\leq d$, Euler additivity and the boundary Euler formula give
    \begin{align}\label{eq:EC-cavity-L-boundary-recursion}
        \operatorname{EC}(\partial L\cap F)
        =\bigl(1-(-1)^k\bigr)\operatorname{EC}(L\cap F)-\operatorname{EC}(L\cap\partial F)+\operatorname{EC}(\partial L\cap\partial F),
    \end{align}
    and thus, by \eqref{eq:EC-cavity-excursion-face-bound}, \eqref{eq:EC-cavity-L-boundary-recursion} and induction on $k$,
    \begin{align}\label{eq:EC-cavity-L-boundary-bound}
        |\operatorname{EC}(\partial L\cap F)|\leq C_dN_v.
    \end{align}

    Combining \eqref{eq:EC-cavity-KLH-euler},
    \eqref{eq:EC-cavity-excursion-face-bound}, and
    \eqref{eq:EC-cavity-L-boundary-bound}, for
    $F\in\{B_v,\partial_{\mathrm{LL}}B_v\}$,
    \begin{align}\label{eq:EC-cavity-K-face-bound}
        \left|\operatorname{EC}(\partial K\cap F)-\operatorname{EC}(K\cap F)\right|\leq C_dN_v.
    \end{align}

    Since $\mathscr A$ is finite and its cavities are disjoint, Euler
    additivity, the definition of $w_v(D)$, and
    \eqref{eq:EC-cavity-K-face-bound} now give
    \begin{align}\label{eq:EC-cavity-finite-sum-bound}
        \left|\sum_{D\in\mathscr A}w_v(D)\right|\leq C_dN_v.
    \end{align}

    For any finite $\mathscr J\subseteq\mathscr D_{-u}$, 
    apply \eqref{eq:EC-cavity-finite-sum-bound} separately to the subfamilies on which $w_v(D)$ is nonnegative and negative, 
    then we have
    \begin{align}\label{eq:EC-cavity-finite-absolute-bound}
        \sum_{D\in\mathscr J}|w_v(D)|\leq C_dN_v.
    \end{align}

    Taking the supremum over finite $\mathscr J$ in \eqref{eq:EC-cavity-finite-absolute-bound} proves
    \begin{equation}\label{eq:EC-cavity-local-bound}
        \sum_{D\in\mathscr D_{-u}}|w_v(D)|\leq C_dN_v.
    \end{equation}

    Since \citet[Proposition~4.14]{McAuley2026a} gives $\mathbb EN_v<\infty$, \eqref{eq:EC-cavity-local-bound} implies
    \begin{align}\label{eq:EC-cavity-local-integrability}
        \mathbb E\sum_{D\in\mathscr D_{-u}}|w_v(D)|<\infty.
    \end{align}

    Stationarity and Tonelli's theorem, together with \eqref{eq:EC-cavity-local-integrability}, then give
    \begin{align}\label{eq:EC-cavity-transport-integrability}
        \mathbb E\sum_{\substack{D\in\mathscr D_{-u}\\z(D)=\mathbf 0}}\sum_{v\in\mathbb Z^d}|w_v(D)|
        &=\sum_{v\in\mathbb Z^d}\mathbb E\sum_{\substack{D\in\mathscr D_{-u}\\z(D)=\mathbf 0}}|w_v(D)|\notag\\
        &=\sum_{v\in\mathbb Z^d}\mathbb E\sum_{\substack{D\in\mathscr D_{-u}\\z(D)=-v}}|w_{\mathbf 0}(D)|\notag\\
        &=\mathbb E\sum_{D\in\mathscr D_{-u}}|w_{\mathbf 0}(D)|<\infty.
    \end{align}
    
\end{proof}

\begin{proof}[Proof of Lemma~\ref{lem:EC-cavity-topological-bound}]
    \textbf{\textit{Step 1.}}
    Let $H_i(A;\mathbb Q)$ denote the $i$th singular homology group of a space $A$.  
    Define the Betti numbers of $C$ and $D$ by
    \begin{align}\label{eq:EC-cavity-Betti-definitions}
        b_i(C):=\dim H_i(C;\mathbb Q),\qquad
        b_i(D):=\dim H_i(D;\mathbb Q),\qquad i\geq 0.
    \end{align}

    Denote $H^i(A;\mathbb Q)$ for cohomology group and $\widetilde H_i(A;\mathbb Q)$ for reduced homology group.  
    In particular, for a nonempty path-connected space $A$, $\widetilde H_0(A;\mathbb Q)=0$,
    whereas 
    \begin{align}\label{eq:EC-cavity-reduced-homology}
        \widetilde H_i(A;\mathbb Q)=H_i(A;\mathbb Q)
    \end{align}
    for $i\geq 1$.
    Apply universal coefficient theorem and Alexander duality for $C$ and $D$ {\citet[Theorem~3.2 and Corollary~3.45]{Hatcher2002}} gives
    \begin{align}\label{eq:EC-cavity-Alexander-duality}
        \operatorname{Hom}_{\mathbb Q}\bigl(H_i(A;\mathbb Q),\mathbb Q\bigr)\cong H^i(A;\mathbb Q)\cong\widetilde H_{d-i-1}(S^d\setminus A;\mathbb Q),
        \quad A\in\{C,D\},\quad 1\le i\le d-1.
    \end{align}
    
    Identify $S^d$ with the one-point compactification $\mathbb R^d\cup\{\infty\}$.  
    Let $V_0$ be the component of $S^d\setminus C$ containing $\infty$.  
    By the definition of filling, $S^d\setminus D=V_0$.
    Thus the direct-sum decomposition of homology {\citet[Proposition~2.6]{Hatcher2002}} gives
    \begin{align}\label{eq:EC-cavity-complement-homology-sum}
        H_{d-i-1}(S^d\setminus C;\mathbb Q)\cong H_{d-i-1}(V_0;\mathbb Q)\oplus H_{d-i-1}\bigl((S^d\setminus C)\setminus V_0;\mathbb Q\bigr).
    \end{align}

    Therefore for $1\leq i<d-1$, \eqref{eq:EC-cavity-reduced-homology} and \eqref{eq:EC-cavity-complement-homology-sum} gives
    \begin{align}\label{eq:EC-cavity-complement-dimension-bound}
        \dim \widetilde{H}_{d-i-1}(V_0;\mathbb{Q})=\dim H_{d-i-1}(V_0;\mathbb Q)
        \leq\dim H_{d-i-1}(S^d\setminus C;\mathbb Q)=\dim\widetilde H_{d-i-1}(S^d\setminus C;\mathbb Q).
    \end{align}

    For $i=d-1$, we instead use $\widetilde H_0(V_0;\mathbb Q)=0$, since $V_0$ is path connected.
    Consequently, \eqref{eq:EC-cavity-Alexander-duality} and \eqref{eq:EC-cavity-complement-dimension-bound} gives
    \begin{align}\label{eq:EC-cavity-Betti-comparison}
        b_i(D)=\dim\widetilde H_{d-i-1}(V_0;\mathbb Q)
        \leq\dim\widetilde H_{d-i-1}(S^d\setminus C;\mathbb Q)=b_i(C),
        \quad 1\leq i\leq d-1.
    \end{align}

    \textbf{\textit{Step 2.}}
    For a nondegenerate critical point $p$ of $g$, 
    its Morse index $\operatorname{ind}_g(p)$ is the number of negative eigenvalues of $\nabla^2g(p)$.  
    Denote the number of critical points in $C$ with Morse index $i$ as  
    \begin{align*}
        m_i(C):=\#\{p\in C:\nabla g(p)=0,\ \operatorname{ind}_g(p)=i\},\quad 0\le i\le d.
    \end{align*}

    As $\partial C=(g|_C)^{-1}(a)$ is a regular upper level and
    $(g|_C)^{-1}(b)=\varnothing$ for $b<\min_C g$, we apply the weak
    Morse inequalities for compact manifolds with boundary in
    \citet[Chapter~6, Theorems~3.5 and~3.7]{Hirsch1976} to obtain
    \begin{equation}\label{eq:EC-cavity-Morse-bound}
        b_i(C)=\dim H_i(C;\mathbb Q)\leq m_i(C),
        \quad 0\le i\le d.
    \end{equation}
    
    Moreover, by definition 
    \begin{align*}
        N_{\mathrm{bad}}(C)=\sum_{i=1}^{d}m_i(C).
    \end{align*}

    \textbf{\textit{Step 3.}}
    Since $D$ is connected, $b_0(D)=1$.
    The homological formula for the Euler characteristic, \eqref{eq:EC-cavity-Betti-comparison} and \eqref{eq:EC-cavity-Morse-bound} now yield
    \begin{align*}
        \operatorname{EC}(D)
        &=1+\sum_{i=1}^{d-1}(-1)^i b_i(D)
        \geq 1-\sum_{i=1}^{d-1}b_i(D)
        \geq 1-\sum_{i=1}^{d-1}b_i(C)
        \geq 1-\sum_{i=1}^{d-1}m_i(C)\\
        &\geq 1-N_{\mathrm{bad}}(C).
    \end{align*}
    This proves \eqref{eq:EC-cavity-topological-bound}.
\end{proof}

\begin{proof}[Proof of Lemma~\ref{lem:EC-cavity-bad-intensity}]
    Denote 
    \begin{align*}
        X=f(\mathbf0),\qquad
        G=\nabla f(\mathbf0),\qquad
        M=\nabla^2f(\mathbf0),\qquad
        \Sigma=\operatorname{Cov}(G).
    \end{align*}
 
    Let $K(x)=\mathbb E[f(x)f(\mathbf0)]$.  
    $K$ is even function, so differentiating the covariance at the origin gives, for all $i,j,k$,
    \begin{align*}
        \operatorname{Cov}(X,G_i)&=0,
        &\Sigma_{ij}=-\partial_{ij}K(\mathbf0),\\
        \operatorname{Cov}(M_{ij},X)
        &=\partial_{ij}K(\mathbf0)=-\Sigma_{ij},
        &\operatorname{Cov}(M_{ij},G_k)=0.
    \end{align*}
    
    We first verify that $\Sigma\succ0$.  
    Recall that $f=q*W$, thus for every $v\ne\mathbf0$,
    \begin{align*}
        v^{\mathsf T}\Sigma v=\operatorname{Var}(v\cdot\nabla f(\mathbf0))
        =\int_{\mathbb R^d}|v\cdot\nabla q(x)|^2\,dx\geq 0.
    \end{align*}

    If this integral were zero, 
    the continuity of $\nabla q$ would imply $v\cdot\nabla q\equiv0$, and hence $q(x+tv)=q(x)$ for all $x$ and $t$.
    Since $|x+tv|\to\infty$ as $t\to\infty$, 
    Assumption~\ref{ass:McAuley} would then give $q(x)=\lim_{t\to\infty}q(x+tv)=0$ for every $x$, 
    contradicting $\|q\|_{L^2}=1$.  
    Therefore $\Sigma\succ0$, and we denote $\lambda=\lambda_{\min}(\Sigma)>0$.

    Define the centred Gaussian symmetric matrix $Z:=M+X\Sigma$.
    Since $\operatorname{Var}(X)=1$, the preceding covariance identities give
    \begin{align*}
        \operatorname{Cov}(Z_{ij},X)=-\Sigma_{ij}+\Sigma_{ij}=0,
        \qquad
        \operatorname{Cov}(Z_{ij},G_k)=0,
    \end{align*}
    so joint Gaussianity implies that $Z$ is independent of $(X,G)$.
    Additionally, $G$ is independent of $(X,M)$.  
    Thus
    \begin{align*}
        M=-X\Sigma+Z,\qquad
        \mathcal L(M\mid X=-t,G=\mathbf0)=\mathcal L(t\Sigma+Z),
        \qquad t\ge0.
    \end{align*}

    For $u\geq 1$, denote $E_u=\{X\leq -u,\ M\not\succ0\}$.
    On $\{X\leq -u\}$, for every unit vector $v$,
    \begin{align*}
        v^{\mathsf T}Mv=(-X)v^{\mathsf T}\Sigma v+v^{\mathsf T}Zv
        \geq\lambda u-\|Z\|_{\mathrm{op}}.
    \end{align*}

    Hence under $\{X\leq -u\}$, $M\succ0$ whenever $\|Z\|_{\mathrm{op}}<\lambda u$, and
    \begin{equation}\label{eq:EC-cavity-bad-event-inclusion}
        E_u\subseteq\{X\leq -u\}\cap\{\|Z\|_{\mathrm{op}}\geq\lambda u\}.
    \end{equation}

    Since $\|Z\|_{\mathrm{op}}\leq d\max_{i,j}|Z_{ij}|$, a union bound and Gaussian tails give constants $C,c_1>0$ such that
    \begin{align*}
        \mathbb P(\|Z\|_{\mathrm{op}}\ge\lambda u)
        \leq\sum_{i,j=1}^d\mathbb P\bigl(|Z_{ij}|\ge\lambda u/d\bigr)
        \leq Ce^{-c_1u^2}.
    \end{align*}

    As $X$ is standard normal and independent of $Z$,
    \eqref{eq:EC-cavity-bad-event-inclusion} yields
    \begin{align}\label{eq:EC-cavity-bad-event-tail}
        \mathbb P(E_u)\leq\mathbb P(X\le-u)\,
        \mathbb P(\|Z\|_{\mathrm{op}}\ge\lambda u)\leq Ce^{-u^2/2}e^{-c_1u^2}
        =Ce^{-(1/2+c_1)u^2}.
    \end{align}

    By \citet[Eq.~(6.18)]{AzaisWschebor2009},
    applied to $-f$ with $k=2$, which counts the number of critical points of $f$,
    and using the stationarity of $f$, we obtain
    \begin{align}\label{eq:EC-cavity-all-critical-Kac-Rice}
        &\mathbb E\#\bigl\{x\in[0,1)^d:\nabla f(x)=\mathbf0,\ f(x)\le-u\bigr\}\notag\\
        =&\int_{[0,1)^d}\!dx\int_u^\infty\mathbb E\!\left[|\det(-M)|\mid -X=t,\ -G=\mathbf0\right]p_{-X,-G}(t,\mathbf0)\,dt\notag\\
        =&\int_{-\infty}^{-u}\mathbb E\!\left[|\det M|\mid X=t,\ G=\mathbf0\right]p_{X,G}(t,\mathbf0)\,dt\notag\\
        =&p_G(\mathbf0)\,\mathbb E\!\left[|\det M|\mathbf1_{\{X\le-u\}}\mid G=\mathbf0\right].
    \end{align}

    Here $p_{X,G}$ and $p_{-X,-G}$ denote the corresponding joint densities;
    $p_G$ denotes the density of $G$.
    The fixed level $-u$ contains no critical point almost surely.

    Similarly, applying the same formula to $-f$ with $k=1$, which counts the local maxima of $-f$, gives
    \begin{align}\label{eq:EC-cavity-local-maxima-Kac-Rice}
        &\mathbb E\#\bigl\{x\in[0,1)^d:\nabla f(x)=\mathbf0,\ f(x)\le-u,\ \nabla^2f(x)\succ0\bigr\}\notag\\
        =&p_G(\mathbf0)\,\mathbb E\!\left[|\det M|\mathbf1_{\{X\le-u\}}\mathbf1_{\{M\succ0\}}\mid G=\mathbf0\right].
    \end{align}

    By the definition of $\nu_{\mathrm{bad}}(u)$, 
    subtracting \eqref{eq:EC-cavity-local-maxima-Kac-Rice} from \eqref{eq:EC-cavity-all-critical-Kac-Rice} yields
    \begin{align}\label{eq:EC-cavity-bad-count-Kac-Rice}
        \nu_{\mathrm{bad}}(u)
        &=p_G(\mathbf0)\,\mathbb E\!\left[|\det M|\mathbf1_{\{X\le-u\}}\bigl(1-\mathbf1_{\{M\succ0\}}\bigr)\mid G=\mathbf0\right]\notag\\
        &=p_G(\mathbf0)\,\mathbb E\!\left[|\det M|\mathbf1_{E_u}\mid G=\mathbf0\right]\notag\\
        &=p_G(\mathbf0)\,\mathbb E\!\left[|\det M|\mathbf1_{E_u}\right],
    \end{align}
    where the last equality uses the independence of $G$ from $(X,M)$.

    Since $\det M$ is a polynomial in finitely many Gaussian variables,
    $\mathbb E|\det M|^p<\infty$ for every finite $p>1$.
    By H\"older's inequality and \eqref{eq:EC-cavity-bad-event-tail},
    \begin{align}\label{eq:EC-cavity-bad-count-Holder}
        \nu_{\mathrm{bad}}(u)
        &\le p_G(\mathbf0)\bigl(\mathbb E|\det M|^p\bigr)^{1/p}\mathbb P(E_u)^{1-1/p}\notag\\
        &\le C_p e^{-(1-1/p)(1/2+c_1)u^2}.
    \end{align}
    Fix $p>1+(2c_1)^{-1}$ and set $\delta=(1-1/p)(1/2+c_1)-1/2>0$.
    Thus \eqref{eq:EC-cavity-bad-count-Holder} proves \eqref{eq:EC-cavity-bad-intensity}.
\end{proof}

\begin{proof}[Proof of Lemma~\ref{lem:EC-cavity-large-tail}]
    \textbf{\textit{Step 1.}}
    Fix a sufficiently large $L$ and suppose that, for some
    $D\in\mathscr F_{-u}$,
    \begin{align}\label{eq:EC-cavity-dyadic-event}
        \mathbf 0\in D,
        \qquad L<\operatorname{diam}D\le2L.
    \end{align}

    Let $C(D)$ be the component of $\{f\le-u\}$ whose filling is $D$.
    The filling properties used above give
    \begin{align*}
        C(D)\subseteq D\subseteq\Lambda_{2L},
        \qquad \operatorname{diam}C(D)=\operatorname{diam}D>L.
    \end{align*}

    Choose $x,y\in C(D)$ with $|x-y|>L$, and choose
    $v\in\mathbb Z^d$ such that $x\in B_v$.  Then
    $v\in\Lambda_{2L+1}$ and
    \begin{align*}
        |y-v|_\infty\ge |y-x|_\infty-|x-v|_\infty
        >\frac{L}{\sqrt d}-1.
    \end{align*}

    Since $C(D)$ is path connected, for all sufficiently large $L$ this
    implies
    \begin{align}\label{eq:EC-cavity-long-connection}
        B_v\overset{\{f\le-u\}}{\longleftrightarrow}
        v+\partial\Lambda_{n_L},
        \qquad
        n_L:=\left\lfloor\frac{L}{4\sqrt d}\right\rfloor>2.
    \end{align}

    Since $\{f\le-u\}\subseteq\{f\le a_0\}$, stationarity,
    $B_{\mathbf 0}\subseteq\Lambda_1$, and $f\overset d=-f$ allow us to
    apply \citet[Theorem~1.1]{McAuley2026a} at the fixed level
    $-a_0>\ell_c$.  Thus, uniformly in $v$,
    \begin{align*}
        \mathbb P\left(B_v\overset{\{f\le-u\}}{\longleftrightarrow}v+\partial\Lambda_{n_L}\right)
        \leq\mathbb P\left(\Lambda_1\overset{\{f\ge-a_0\}}{\longleftrightarrow}\partial\Lambda_{n_L}\right)
        \leq Ce^{-cL}.
    \end{align*}

    There are at most $CL^d$ possible indices
    $v\in\mathbb Z^d\cap\Lambda_{2L+1}$.  Hence
    \begin{align}\label{eq:EC-cavity-one-scale-tail}
        \mathbb P\left(\exists D\in\mathscr F_{-u}:
        \mathbf 0\in D,\ L<\operatorname{diam}D\le2L\right)
        \le CL^de^{-cL}.
    \end{align}

    Summing \eqref{eq:EC-cavity-one-scale-tail} over $L=2^jR$, $j\ge0$ gives
    \begin{align*}
        \mathbb P\left(\exists D\in\mathscr F_{-u}:\mathbf 0\in D,\ \operatorname{diam}D>R\right)
        \leq C\sum_{j\ge0}(2^jR)^de^{-c2^jR}
        \leq C_0e^{-c_0R}.
    \end{align*}

    Increasing $C_0$ if necessary extends the estimate to every $R\ge1$.

    \textbf{\textit{Step 2.}} We prove the existence of maximal fillings.
    By stationarity and \eqref{eq:EC-cavity-large-tail}, for every $x\in\mathbb Q^d$,
    \begin{align*}
        \mathbb P\left(\exists D\in\mathscr F_{-u}:
        x\in D,\ \operatorname{diam}D>R\right)\le C_0e^{-c_0R}.
    \end{align*}

    Letting $R\to\infty$ and taking a countable intersection over $x\in\mathbb Q^d$, we obtain, almost surely,
    \begin{align*}
        R_x:=\sup\bigl(\{\operatorname{diam}D:D\in\mathscr F_{-u},\ x\in D\}\cup\{0\}\bigr)<\infty,
        \quad x\in\mathbb Q^d.
    \end{align*}

    Also, almost surely $-u$ is a regular value of $f$, and every bounded set contains only finitely many critical points of $f$; see
    \citet[Lemma~4.10 and Proposition~4.14]{McAuley2026a}.

    Fix $D\in\mathscr F_{-u}$ and choose $x\in\mathbb Q^d\cap\operatorname{int}D$.
    Every $D'\in\mathscr F_{-u}$ containing $D$ satisfies
    \begin{align*}
        C(D')\subseteq D'\subseteq\{y\in\mathbb R^d:|y-x|\le R_x\}:=B(x,R_x).
    \end{align*}

    Each compact sublevel component $C(D')$ attains its minimum at an interior critical point, 
    while distinct components give distinct critical points in $B(x,R_x)$.  
    Consequently, only finitely many members of $\mathscr F_{-u}$ contain $D$.
    A maximal member of this finite family is the maximal filling.

    \textbf{\textit{Step 3.}} Finally we prove local finiteness of $\mathscr D_{-u}$.
    Applying Lemma~\ref{lem:cubical-excursion-components} to
    $g(x)=-f(v+x)$ at level $u$, we find that
    $\{f\le-u\}\cap B_v$ has finitely many connected components.

    For $D=\operatorname{Fill}(C(D))$, closedness of $C(D)$ gives $\partial D\subseteq C(D)$.
    Thus only finitely many $D\in\mathscr D_{-u}$ satisfy $\partial D\cap B_v\ne\varnothing$.
    If $D\cap B_v\ne\varnothing$ but $\partial D\cap B_v=\varnothing$,
    connectedness of $B_v$ implies $B_v\subseteq\operatorname{int}D$.
    Since maximal fillings are disjoint, so at most one maximal filling contains $B_v$. 
    Writing $\operatorname{Comp}(A)$ for the collection of connected components of $A$, we obtain
    \begin{equation}\label{eq:EC-cavity-local-finiteness}
        \#\{D\in\mathscr D_{-u}:D\cap B_v\ne\varnothing\}
        \le 1+\#\operatorname{Comp}(\{f\le-u\}\cap B_v)<\infty.
    \end{equation}

    Every compact subset of $\mathbb R^d$ is covered by finitely many unit cubes, which proves the third result.
\end{proof}

\textbf{Funding.}
Liu S.H. was partially supported by the Fundamental Research Funds for the Central Universities DUT25RC(3)133.
Shao Q.M. was partially supported by National Nature Science Foundation of China NSFC 12031005 and Shenzhen Outstanding Talents Training Fund, China.

\end{document}